\documentclass{article}
\usepackage{arxiv}

\usepackage[normalem]{ulem} 

\usepackage[utf8]{inputenc} 
\usepackage[T1]{fontenc}    
\usepackage{hyperref}       
\usepackage{url}            
\usepackage{booktabs}       
\usepackage{amsfonts}   
\usepackage{amsmath}
\usepackage{microtype}      
\usepackage{color}
\usepackage{lipsum}
\usepackage{amsthm}
\usepackage{amssymb}
\usepackage{indentfirst}
\usepackage{graphicx}
\usepackage{epstopdf}
\usepackage{fourier}
\usepackage{bm}
\usepackage{bbm} 
\usepackage{mathtools}
\usepackage{todonotes}
\usepackage{latexsym,enumerate}
\usepackage{ulem}
\usepackage{subcaption}

\newcommand{\comment}[1]{}

\newcommand{\BEA}{\begin{eqnarray}}
\newcommand{\EEA}{\end{eqnarray}}
\newcommand{\bk}{{\bf k}}

\newcommand{\bx}{{\bf x}}
\newcommand{\by}{{\bf y}}

\newcommand{\BR}{\mathbb{R}}

\newtheorem{lem}{Lemma}[section]
\newtheorem{theo}{Theorem}[section]
\newtheorem{prop}{Proposition}[section]
\newtheorem{defn}{Definition}[section]

\newtheorem{assu}{Assumption}[section]
\newtheorem{remark}{Remark}

\title{Diffusion Maps Kernel Ridge Regression}

\author{
 John Harlim \\
  Department of Mathematics\\ Institute for Computational and Data Sciences \\
  The Pennsylvania State University, University Park, PA 16802, USA\\
  \texttt{jharlim@psu.edu} 
  \And
 Daning Huang, Jiwoo Song \\
  Department of Aerospace Engineering \\
  The Pennsylvania State University, University Park, PA 16802, USA\\
  \texttt{daning@psu.edu,jzs6565@psu.edu} 
}

\begin{document}

\maketitle

\begin{abstract}
In this paper, we study kernel ridge regression using the data-driven diffusion maps (DM) kernel, which is constructed through algebraic manipulations of the diffusion maps algorithm. Under the assumptions that the data lie on a manifold and are sampled uniformly, we prove that the appropriately scaled DM kernel converges uniformly to the heat kernel on the manifold for sufficiently large times as the dataset size increases and the kernel bandwidth is scaled appropriately. Consequently, the limiting Reproducing Kernel Hilbert Space (RKHS) induced by the DM kernel coincides with the RKHS associated with the heat kernel on the manifold. We further show that the RKHS induced by the DM kernel is isometrically isomorphic to the RKHS of the Gaussian kernel, which is continuously embedded in the RKHS of a Mat\'ern kernel whose norm is equivalent to an appropriate Sobolev norm. This result implies that standard risk bounds for kernel ridge regression applicable to Mat\'ern kernels also apply to the DM kernel. Finally, we provide numerical results that (1) validate the convergence of the heat kernel approximation, (2) demonstrate the greater expressiveness of the DM kernel compared to the Gaussian kernel for supervised learning over a larger class of functions on manifolds with boundary, and (3) demonstrate the advantage of the DM kernel over the Gaussian kernel in learning functions with varying frequencies and co-dimensions.
\end{abstract}

\section{Introduction}

The effectiveness of kernel methods such as Kernel Ridge Regression (KRR) are fundamentally determined by the choice of kernel, because the kernel determines the hypothesis space, i.e., the Reproducing Kernel Hilbert Space (RKHS) in which learning is performed \cite{scholkopfsmola2002,christmann2008support}. In practice, this fact means the model selection problem is analogous to a kernel selection problem. Particularly, one should choose a kernel such that the induced RKHS is rich enough to approximate the unknown target function while retaining sufficient regularity for stable generalization.

An existing approach to choosing kernel is to identify the kernel by fitting the data to a class of kernels of choice, rather than committing to a particular kernel. For example, a semi-definite programming (SDP) is proposed in \cite{LanckrietEtAl2004} for identifying the kernel matrix directly from a class of admissible kernels. However, this SDP based method is not scalable for large sample size and is superseded by Multiple Kernel Learning that solves a convex dual optimization problem to identify nonnegative weights over a fixed dictionary of kernels \cite{BachLanckrietJordan2004}. Kernel flows generalizes this idea by solving a non-convex optimization problem \cite{OwhadiScovelSuleiman2019}. 
Although this class of approaches enlarges the family of admissible kernels, it still requires selecting and optimizing within a predefined (finite set of) kernel class, leaving persistent challenges in representation bias, computational scalability, and out-of-sample generalization. 

A complementary direction is to construct a kernel that explicitly encodes the geometry and regularity inherent in the input data, such that the induced RKHS can represent the structure of the target function without brute-force kernel search.
Recently, \cite{song2025learning} proposed a data-driven Diffusion Maps (DM) kernel for learning solution operators of dynamical systems. Empirically, they showed that kernel ridge regression with the DM kernel (DMKRR) consistently outperforms state-of-the-art random-feature, neural-network, and other operator-learning methods in both long-horizon prediction accuracy and data efficiency. Their numerical results suggest that the DM kernel induces a hypothesis space well suited to represent observables (i.e., functions) defined on the forward-invariant set of the dynamical system encoded by the training samples. A rigorous mathematical explanation of this success is challenging due to the fact that they tuned the KRR with a validation metric that is different from the regression loss function, and the data are not independent and identically distributed (i.i.d.). 

In this paper, we provide a rigorous mathematical formalization of DMKRR 
for learning functions, which is a simpler setup than learning dynamical systems \cite{song2025learning}. First, under the manifold assumption and uniformly distributed samples, we show that an appropriately scaled diffusion maps (DM) kernel converges to the heat kernel associated with the Laplace-Beltrami operator for sufficiently large time $t$ that is lower bounded by a positive quantity of order $o(N^{-2/d}\log N)$ in the large-sample limit with an appropriately vanishing bandwidth parameter (see Theorem~\ref{thm1}). This result suggests that for such $t>0$, the RKHS induced by the scaled DM kernel approximates the RKHS induced by the heat kernel. This is a stronger mode of convergence than the one reported in \cite{dunson2021spectral}, where convergence was only uniformly over the training dataset. Second, in the same setting, we derive a risk bound (oracle inequality) for kernel ridge regression with the diffusion maps kernel (DMKRR), see Theorem~\ref{thm2}. Our analysis is based on the general result of \cite{steinwart2009optimal}, which applies to RKHSs induced by Mat\'ern kernels on arbitrary compact metric spaces. To obtain this bound, we prove that the RKHS induced by the DM kernel is isometrically isomorphic to the RKHS induced by the Gaussian kernel that is continuously embedded in the RKHS induced by a Mat\'ern kernel, with a norm equivalent to a Sobolev norm of suitable regularity. 

Numerically, we conduct the following verifications. First, we validate the convergence of the heat kernel estimation on simple closed manifolds, circle, flat torus, and a two-dimensional disk, whose analytical heat kernels are known. Second, we design a simple test example to identify which function classes are well suited for the DM kernel. Since the DM kernel is a rescaled Gaussian RBF kernel, we also identify the classes of functions that are well approximated by each kernel. Third, we also verify the error rates of regression of smooth labels of varying frequencies and co-dimensions.

This paper is organized as follows: In Section~\ref{sec2}, we review some mathematical preliminaries to formalize the notation used in the paper and the setting. In Section~\ref{sec3}, we prove the convergence of the DM kernel to the heat kernel of the Laplace-Beltrami operator. In Section~\ref{sec4}, we discuss the convergence of the Diffusion Maps Kernel Ridge Regression (DMKRR). In Section~\ref{sec5}, we report the numerical performance of DMKRR simulations compared to the RBF Gaussian kernel. We conclude the paper with a short summary in Section~\ref{sec6}. For completeness, we provide three appendices: In Appendix~\ref{app:A}, we report the asymptotic expansion of the continuous DM kernel that is useful for the theoretical discussion. In Appendix~\ref{app:proof}, we document proofs of some intermediate results (Lemmas). In Appendix~\ref{app:num}, we document additional details and numerical results that compliment Section~\ref{sec5}. 

\section{Overview of mathematical setting}\label{sec2}


In this paper, let $\mathcal{M}$ be a $d$-dimensional closed (compact with no boundary) smooth sub-manifold of $\BR^n$ and $X = \{\bx_1,\ldots, \bx_N\} \subset \mathcal{M}$ be a set of i.i.d random samples with distribution $\mu$ that is empirically estimated by the empirical measure $\mu_N = \frac{1}{N}\sum_{i=1}^N\delta(\bx-\bx_i)$. The following basic result will be important for the analysis in this paper.  

\begin{lem}\label{samplingassumption} (Lemma B.2 in \cite{harlim2023radial}) Let the training data set $X$ be sampled i.i.d. with sampling distribution $\mu$ that is absolutely continuous with respect to the volume measure, $d\mu = q d\text{Vol}$. Assume that the sampling density is bounded away from zero and infinity, $0<q_{\min}\leq q \leq q_{\max}$. Then, with probability higher than $1-\frac{1}{N}$, the fill distance,
\BEA
h = h_{X,\mathcal{M}}:= \sup_{\bx\in \mathcal{M}}\min_{\bx_i\in X}d_g(\bx,\bx_i) \leq  C\left(\frac{\log N}{N}\right)^{\frac{1}{d}},\label{filldistance}
\EEA where 
the constant $C$ depends on the intrinsic dimension $d$ of $\mathcal{M}$ and on the lower bound $q_{\min}$, but independent of $N$.
\end{lem}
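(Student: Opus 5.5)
We follow the standard covering-plus-union-bound argument. The idea is to cover $\mathcal{M}$ by a finite number of small geodesic balls and show that, with high probability, every ball contains at least one sample. Any point of $\mathcal{M}$ is then within a controlled geodesic distance of a sample.

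\textbf{Step 1: a covering at scale $r$.} Fix $r>0$ smaller than the injectivity radius of $\mathcal{M}$. Since $\mathcal{M}$ is a compact smooth Riemannian manifold, there are constants $c_1,c_2>0$, depending only on $\mathcal{M}$ and hence on $d$ and the geometry, such that for every $\bx\in\mathcal{M}$ and $r\le r_0$,
\[
c_1 r^d \le \mathrm{Vol}(B_g(\bx,r)) \le c_2 r^d .
\]
Now take a maximal $r$-separated set $\{\by_1,\ldots,\by_K\}\subset\mathcal{M}$. The balls $B_g(\by_j,r/2)$ are disjoint, so $K c_1 (r/2)^d \le \mathrm{Vol}(\mathcal{M})$, which gives $K\le C_1 r^{-d}$. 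By maximality, the balls $B_g(\by_j,r)$ cover $\mathcal{M}$.

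\textbf{Step 2: every ball is hit.} The probability that a single i.i.d. sample falls in $B_g(\by_j,r)$ satisfies
\[
\mu(B_g(\by_j,r)) \ge q_{\min} c_1 r^d =: p .
\]
So the probability that none of the $N$ samples lies in that ball is at most $(1-p)^N\le e^{-Np}$. A union bound over the $K$ balls gives
\[
\mathbb{P}(\text{some ball is empty}) \le C_1 r^{-d} e^{-q_{\min}c_1 N r^d}.
\]
On the complementary event, take any $\bx\in\mathcal{M}$. It lies in some $B_g(\by_j,r)$, and that ball contains some $\bx_i$, so $d_g(\bx,\bx_i)\le 2r$. Hence $h\le 2r$.

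\textbf{Step 3: choice of $r$.} Set
\[
r^d = \frac{\alpha\log N}{q_{\min}c_1 N}
\]
with $\alpha$ to be chosen. Then $e^{-q_{\min}c_1Nr^d}=N^{-\alpha}$, and
\[
C_1 r^{-d} = \frac{C_1 q_{\min}c_1 N}{\alpha\log N}\le C' N .
\]
The failure probability is therefore at most $C' N^{1-\alpha}$. Taking $\alpha=2$, and enlarging $\alpha$ slightly if needed to absorb $C'$ for $N\ge 2$ (or via $\log N\ge\log 2$), makes it at most $1/N$. Then
\[
h\le 2r = C\left(\frac{\log N}{N}\right)^{1/d},
\qquad
C = 2\left(\frac{\alpha}{q_{\min}c_1}\right)^{1/d},
\]
which depends only on $d$, $q_{\min}$, and the manifold's geometric constants.

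\textbf{Main obstacle.} The delicate point is the uniform lower volume bound $\mathrm{Vol}(B_g(\bx,r))\ge c_1r^d$ in Step 1. We obtain it from compactness: in normal coordinates the metric is uniformly comparable to the Euclidean one for $r$ below the injectivity radius, with curvature bounds giving the constants. The bound also requires $r\le r_0$, so for small $N$ the chosen $r$ may exceed $r_0$. We handle this by noting that $h\le\mathrm{diam}(\mathcal{M})$ always holds, so enlarging $C$ covers those finitely many small-$N$ cases.
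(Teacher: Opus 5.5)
Your covering-plus-union-bound argument is correct and is the standard proof of this fact. The paper gives no proof of its own and only cites Lemma B.2 of \cite{harlim2023radial}, so there is no different route in the paper to compare against. Your observation that $C$ also depends on the geometry of $\mathcal{M}$ is accurate, and the statement as written suppresses it. That dependence enters through the volume-growth constant $c_1$, the scale $r_0$, and $\mathrm{diam}(\mathcal{M})$ for the finitely many small $N$.
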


Given a Gaussian kernel, $\tilde{k}_{\epsilon}:\mathcal{M}\times \mathcal{M} \to \mathbb{R}$, defined as, 
\begin{equation}\label{eqn_rbf}
\tilde{k}_{\epsilon}(\bx,\by) = \exp\left(-\frac{\|\bx-\by\|^2}{4\epsilon}\right),
\end{equation}
the natural assumption with the fill distance is the choice of kernel bandwidth $\epsilon^{1/2} \gg h$. 

We define the \emph{Continuous Diffusion Maps (CDM) kernel}, $k_{\epsilon}:\mathcal{M}\times\mathcal{M}\to \BR$, through the following algebraic procedure,
\begin{equation}
\begin{aligned}
q_\epsilon(\bx) &= \int_{\mathcal{M}} \tilde k_{\epsilon}(\bx, \by)q(\by)\,d\text{Vol}(\by),\qquad  \hat{k}_{\epsilon}(\bx,\by) =  \frac{\tilde
  k_{\epsilon}(\bx, \by)}{q_{\epsilon}(\bx)q_{\epsilon}(\by)},\\ \hat{q}_{\epsilon}(\bx) &= \int_{\mathcal{M}} \hat{k}_{\epsilon}(\bx,\by) q(\by)d\text{Vol}(\by), \qquad 
  k_{\epsilon}(\bx,\by) = \frac{ \hat{k}_{\epsilon}(\bx,\by)}{\sqrt{\hat{q}_{\epsilon}(\bx)\hat{q}_{\epsilon}(\by)}}.  \label{analyticdmalgebra}
  \end{aligned}
\end{equation}
We note that $q$ denotes the sampling density of the dataset ($d\mu = qd\text{Vol}$).

Numerically, since the sampling measure $\mu$ is unknown, $k_\epsilon$ is not practically available. We approximate the kernel with the empirical measure $\mu\approx \mu_N$ and define:

\begin{defn} The Diffusion Maps (DM) kernel $k_{\epsilon,N}:\mathcal{M}\times \mathcal{M} \to \mathbb{R}$ is defined as,
\begin{equation}
\begin{aligned}
q_{\epsilon,N}(\bx) &= \frac{1}{N}\sum_{j = 1}^N \tilde k_\epsilon(\bx, \bx_j),\qquad
\hat k_{\epsilon,N}(\bx, \by) = \frac{\tilde k_{\epsilon}(\bx,
\by)}{q_{\epsilon,N}(\bx)  q_{\epsilon,N}(\by)},\\
\hat{q}_{\epsilon,N}(\bx) &= \frac{1}{N}\sum_{j = 1}^N \hat
k_{\epsilon,N}(\bx, \bx_j),\qquad
    k_{\epsilon,N}(\bx,\by) = \frac{\hat{k}_{\epsilon,N}(\bx,\by)}{\sqrt{\hat{q}_{\epsilon,N}(\bx)\hat{q}_{\epsilon,N}(\by)}} .\label{scalarkernel}
\end{aligned}
\end{equation}
We note that this kernel is data-driven (as it depends on $N$ in addition to the kernel bandwidth $\epsilon$), and is a discrete estimator of the CDM kernel in \eqref{analyticdmalgebra}. 
\end{defn}

\comment{\color{blue} Define 
\[
\overline{q}_{\epsilon,N} = c q_{\epsilon,N},
\]
for any constant c>0. Then, 
\[
\hat{\overline{k}}_{\epsilon,N}(\bx, \by) = \frac{\tilde k_{\epsilon}(\bx,
\by)}{q_{\epsilon,N}(\bx)  q_{\epsilon,N}(\by)} = \frac{1}{c^2} \hat k_{\epsilon,N}(\bx, \by) 
\]
and
\[
\hat{\overline{q}}_{\epsilon,N}(\bx) = \frac{1}{N}\sum_{j = 1}^N \hat{
\overline{k}}_{\epsilon,N}(\bx, \bx_j) = \frac{1}{c^2} \hat{q}_{\epsilon,N}(\bx). 
\]
so
\[
\hat{\overline{k}}_{\epsilon,N}(\bx,\by) = \frac{\hat{k}_{\epsilon,N}(\bx,\by)}{\sqrt{\hat{q}_{\epsilon,N}(\bx)\hat{q}_{\epsilon,N}(\by)}} = k_{\epsilon,N}(\bx,\by).
\]
}

\begin{remark}\label{DMconnection}
    We should point out that the DM kernel is not defined in the original Diffusion Maps algorithm \cite{coifman2006diffusion,dunson2021spectral}, but is related as follows. Conventionally, the DM algorithm does not use the factor of $\frac{1}{N}$ in both normalizations with the intention of constructing a Markov matrix, with a unit column sum. Particularly, if we define  
\begin{equation}
q_{\epsilon,N}^{DM}(\bx) = \sum_{j = 1}^N \tilde k_\epsilon^{DM}(\bx, \bx_j),\quad
\hat k_{\epsilon,N}^{DM}(\bx, \by) = \frac{\tilde k_{\epsilon}(\bx,
\by)}{q_{\epsilon,N}^{DM}(\bx) q_{\epsilon,N}^{DM}(\by)},\quad
\hat{q}_{\epsilon,N}^{DM}(\bx) = \sum_{j = 1}^N \hat
k_{\epsilon,N}^{DM}(\bx, \bx_j), \nonumber
\end{equation}
the DM algorithm constructs the following Markov transition kernel,
\[
P^{DM}_{\epsilon,N}(\bx,\by) = \frac{\hat k_{\epsilon,N}^{DM}(\bx, \by)}{\hat{q}_{\epsilon,N}^{DM}(\bx)},
\]
of a reversible Markov chain on $\mathcal{M}$.
One can verify that
\BEA \sqrt{\hat{q}_{\epsilon,N}^{DM}(\bx)} \frac{P_{\epsilon,N}^{DM} (\bx,\by)}{\sqrt{\hat{q}_{\epsilon,N}^{DM}(\by)}} =  \frac{\hat{k}_{\epsilon,N}^{DM}(\bx,
\by)}{\sqrt{\hat{q}_{\epsilon,N}^{DM}(\bx) \hat{q}_{\epsilon,N}^{DM}(\by)}}= \frac{1}{N}k_{\epsilon,N}(\bx,\by).\label{DMsymmetrickernel}\EEA
This suggests that the Gram matrix $\mathbf{P}_{\epsilon,N}^{DM}$ of $P_{\epsilon,N}^{DM}$ is diagonally conjugate to the Gram matrix $\frac{1}{N}\mathbf{K}_{\epsilon,N}$ of $\frac{1}{N}k_{\epsilon,N}$. So, eigenvalues of $\mathbf{P}_{\epsilon,N}^{DM}$ and $\frac{1}{N}\mathbf{K}_{\epsilon,N}$ are the same. In the remainder of this paper, we denote the eigenvalues of these matrices as $\left\{\lambda_j^{\epsilon,N}\right\}$. Moreover, one can verify that if $\varphi_j^{DM}$ denotes an eigenvector of $\mathbf{P}_{\epsilon,N}^{DM}$ associated to eigenvalue $\lambda_j^{\epsilon,N}$, then $\varphi_j^{\epsilon,N}= \sqrt{\hat{q}_{\epsilon,N}^{DM}} \varphi_j^{DM}$ is an eigenvector of $\frac{1}{N}\mathbf{K}_{\epsilon,N}$ corresponding to the same eigenvalue. 
\end{remark}

In this remainder of this section, we first identify the Reproducing Kernel Hilbert Space (RKHS) corresponding to the Diffusion Maps kernel in \eqref{scalarkernel} for fixed $\epsilon>0$ and $N\in \mathbb{N}^+$. Subsequently, we give a brief review on the spectral convergence result of the DM operator.

\subsection{RKHS of the DM kernel}\label{sec21}
To characterize the RKHS corresponding to $k_{\epsilon, N}$, we 
define $S_{\epsilon,N}: L^2(\mu_N) \to \mathcal{H}_{\epsilon,N}$, as, 
\begin{eqnarray}
S_{\epsilon,N} f = \int_{\mathcal{M}} k_{\epsilon,N}(\cdot,\by) f(\by) d\mu_N(\by).\label{sk2}   
\end{eqnarray}
One can show that its adjoint $S_{\epsilon,N}^*: \mathcal{H}_{\epsilon,N} \to L^2(\mu_N)$ is an identity operator. 

\comment{\color{blue}
\[
\langle f, \text{id}g\rangle_{L^2(\mu_N)} = \int_{\mathcal{M}} f(\bx) g(\bx)d\mu_N(\bx) = \int_{\mathcal{M}} f(\bx) \langle g, k_{\epsilon,N}(\cdot, \bx)\rangle_{\mathcal{H}_{\epsilon,N}} d\mu_N(\bx) = \langle S_{\epsilon,N} f , g\rangle_{\mathcal{H}_{\epsilon,N}}
\]
}

  Define $T_{\epsilon,N} = S_{\epsilon,N}^*S_{\epsilon,N}$ which is the same integral operator as above, except that it is mapping from $L^2(\mu_N)$ to $L^2(\mu_N)$.
For any bounded kernel, $\|k_{\epsilon,N}\|_{L^2(\mu_N)} <\infty$, the spectral theory (see Theorem 4.9.19 in \cite{debnath2005introduction}) states that $T_{\epsilon,N}$ has real eigenvalues $\lambda_0^{\epsilon,N}\geq  \lambda_1^{\epsilon,N} \geq \ldots \searrow 0$. For Markov kernel, $\lambda_0^{\epsilon,N}=1$. We should point out that eigenvalues of $T_{\epsilon,N}$ are identical to eigenvalues of the Gram matrix $\frac{1}{N}\mathbf{K}_{\epsilon,N}$, which are exactly the eigenvalues of the matrix constructed by the DM algorithm (see \eqref{DMsymmetrickernel}).
The corresponding eigenfunctions, $\{\varphi_i^{\epsilon,N}\}$, form a Riesz basis for $L^2(\mu_N)$. The Mercer's theorem suggests that, $k_{\epsilon,N}$ at the interpolation points can be written as
\begin{eqnarray}
k_{\epsilon,N}(\bx_i,\bx_j) = \sum_{i= 0}^{N-1} \lambda_i^{\epsilon,N} \varphi_i^{\epsilon,N}(\bx_i)\varphi_i^{\epsilon,N}(\bx_j),\qquad \forall \bx_i,\bx_j \in X,\notag
\end{eqnarray}
where $k_{\epsilon,N}$ is a finite-dimensional kernel on $\mathcal M \times \mathcal M$ given by 
\begin{eqnarray}
k_{\epsilon,N}(\bx,\by) = \sum_{i= 0}^{N-1} \lambda_i^{\epsilon,N} \psi_i^{\epsilon,N}(\bx)\psi_i^{\epsilon,N}(\by), \qquad \forall \bx,\by \in \mathcal{M},\label{Mercer}
\end{eqnarray}
where $\psi_i^{\epsilon,N} = (\lambda_i^{\epsilon,N})^{-1} S_{\epsilon,N}\varphi_i^{\epsilon,N}$ is the Nystr\"om extension of $\varphi_i^{\epsilon,N}$. This kernel representation aligns with practical implementation and it is not the same as a truncation of the Mercer's representation of the CDM in \eqref{analyticdmalgebra}.  

We note that $\left\{\sqrt{\lambda_{i}^{\epsilon,N}}\psi_i^{\epsilon,N}\right\}_{i=0}^{N-1}$ forms an orthonormal basis of a finite dimensional space $\mathcal{H}_{\epsilon,N}$. This implies that the RKHS space $\mathcal{H}_{\epsilon,N}$  can be characterized as:
\begin{equation}
\mathcal{H}_{\epsilon.N} = \left\{f = \sum_{i= 0}^{N-1} a_i\sqrt{\lambda_i^{\epsilon,N}} \psi_i^{\epsilon,N}, \sum_{i= 0}^{N-1} a_i^2 < \infty\right\} = \left\{f = \sum_{i= 0}^{N-1} c_i \psi_i^{\epsilon,N},\, \sum_{i= 0}^{N-1} \frac{c_i^2}{\lambda_i^{\epsilon,N}}<\infty\right\},\label{RKHS}   
\end{equation}
that consists of functions of $\mathcal{M}$. 

One of the main questions in this paper is to characterize the RKHS in \eqref{RKHS} as both $N\to \infty$ and $\epsilon \to 0$ with a convergence rate. To do this, we need the following result.

\subsection{Spectral convergence results} 
The following lemma is adapted to our notation from Theorem~2 and Remark~4 in \cite{dunson2021spectral})

\begin{lem}\label{lem:spectral}
Let $\nu_j$ and $\varphi_j$ be the $j$th eigenvalue and eigenfunction of the negative definite Laplace-Beltrami operator, $\Delta_g \varphi_j = \nu_j \varphi_j$ and assume that $\nu_j$ are simple. For any fixed $j, \epsilon,$ and $N$, define $\nu_j^{\epsilon,N}:=\frac{1}{\epsilon}\log \lambda_j^{\epsilon,N}$. Let $\epsilon^{1/2} \gg \left(\frac{\log N}{N}\right)^{\frac{1}{4d+13}}$ be valid. For a fixed $K\leq N$, define, $\Gamma_K:=\min_{0\leq i\leq  K} d(\lambda_i, \sigma(\Delta_g)\backslash{\lambda_i})$ as
the 
as the smallest spectral gap between the first $K$ eigenvalues. Assume that,
\BEA
\epsilon^{\frac{1}{2}} \leq G(K,\Gamma_K):= C_1\min \left(\left(\frac{\min(\Gamma_K,1)}{C_2+ (-\nu_K)^{\frac{d}{2}+5}}\right)^2,\frac{1}{(C_3+(-\nu_K)^{\frac{5d+7}{4}})^2} \right),
\EEA
for some constants $C_1,C_2,C_3>1$ that depends on $d$, the volume, the injectivity radius, and the second fundamental form of the manifold.
Then, with probability higher than $1-N^{-2}$,
\BEA
\left| \nu_j - \nu_{j}^{\epsilon,N} \right| = O(\epsilon^{3/4}) + O\left(\frac{1}{\epsilon^{d+\frac{5}{2}}}\left(\frac{\log N}{N}\right)^{\frac{1}{2}}\right).  \label{eig_rate}
\EEA
Likewise,
\begin{equation} \left|\varphi^{DM}_j(\bx_i) - \alpha_j\varphi_j(\bx_i) \right| = O(\epsilon^{1/4}) + O\left(\frac{1}{\epsilon^{d+3}}\left(\frac{\log N}{N}\right)^{\frac{1}{2}}\right),\label{eiv_rate}
\end{equation}
where $\alpha_j =\{-1,+1\}$,
for all $\bx_i \in X$. Balancing \eqref{eig_rate} and \eqref{eiv_rate}, we achieve orders $\left(\frac{\log N}{N}\right)^{\frac{3}{8d+26}}$ and $\left(\frac{\log N}{N}\right)^{\frac{1}{8d+26}}$, respectively. 
\end{lem}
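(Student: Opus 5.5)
The lemma is essentially a restatement, in the notation of Section~\ref{sec2}, of Theorem~2 and Remark~4 of \cite{dunson2021spectral}. So the plan is a translation argument rather than a fresh spectral analysis. First I will check that the matrix analyzed there coincides, up to the conjugation in Remark~\ref{DMconnection}, with our $\frac{1}{N}\mathbf{K}_{\epsilon,N}$. That reference works with the Markov matrix $\mathbf{P}^{DM}_{\epsilon,N}$ built from the $\alpha=1$ normalization of the Gaussian kernel $\tilde k_\epsilon$ in \eqref{eqn_rbf}. By \eqref{DMsymmetrickernel}, $\mathbf{P}^{DM}_{\epsilon,N}$ and $\frac1N\mathbf{K}_{\epsilon,N}$ are diagonally similar, so they share the eigenvalues $\lambda_j^{\epsilon,N}$. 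Eigenvectors correspond through multiplication by $\sqrt{\hat q^{DM}_{\epsilon,N}}$. One bookkeeping point needs care: the bandwidth convention. Our kernel uses $4\epsilon$ in the denominator, and I will match it to theirs so that $\frac{1}{\epsilon}\log\lambda_j^{\epsilon,N}$ is the correctly scaled estimator of $\nu_j$. Rescaling $\epsilon$ by a constant only changes the constants $C_1,C_2,C_3$ and the implied constants in the $O(\cdot)$ terms.

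Second, I will verify that the hypotheses transfer. The manifold is closed and smooth, and the samples are i.i.d. with density bounded above and below; in particular, the uniform case of the abstract is covered. The eigenvalues $\nu_j$ are assumed simple, and the condition $\epsilon^{1/2}\le G(K,\Gamma_K)$ is exactly the gap condition of the cited theorem. The lower bound $\epsilon^{1/2}\gg(\log N/N)^{1/(4d+13)}$ guarantees that the stochastic error term is small. With these in place, \eqref{eig_rate} and \eqref{eiv_rate} are read off directly, holding with probability at least $1-N^{-2}$. The sign ambiguity $\alpha_j$ comes from normalizing the eigenvectors. The eigenvalue statement for $\nu_j^{\epsilon,N}$ follows from the bound on $|\lambda_j^{\epsilon,N}-e^{\epsilon\nu_j}|$ via the expansion $\log(1+x)=x+O(x^2)$, after dividing by $\epsilon$.

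Third, I will do the balancing computation. For eigenvalues, setting $\epsilon^{3/4}=\epsilon^{-d-5/2}(\log N/N)^{1/2}$ gives $\epsilon^{d+13/4}=(\log N/N)^{1/2}$. Hence $\epsilon=(\log N/N)^{2/(4d+13)}$, and the error is $\epsilon^{3/4}=(\log N/N)^{3/(8d+26)}$. For eigenvectors, setting $\epsilon^{1/4}=\epsilon^{-d-3}(\log N/N)^{1/2}$ gives $\epsilon^{d+13/4}=(\log N/N)^{1/2}$. This produces the same $\epsilon$, and the error is $(\log N/N)^{1/(8d+26)}$. This choice of $\epsilon$ is also consistent with the assumed lower bound on $\epsilon^{1/2}$.

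The main obstacle I expect is the normalization and scaling bookkeeping in the first step, not any new analysis. I have to confirm that the $\frac1N$ factors and the $\alpha=1$ density normalization of \cite{dunson2021spectral} produce the same spectrum as our $k_{\epsilon,N}$. I also have to confirm that the eigenvector bound there is stated for $\varphi^{DM}_j$ normalized in the discrete $\ell^2$ sense matching $\varphi_j$. I will first check Remark~4 of that paper for the precise normalization. If it differs, I will absorb the discrepancy into the constants using $0<q_{\min}\le q\le q_{\max}$.
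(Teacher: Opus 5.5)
Your proposal matches what the paper does: the lemma is not proved there but imported, in translated notation, from Theorem~2 and Remark~4 of \cite{dunson2021spectral}, and your balancing arithmetic correctly reproduces the stated $(\log N/N)^{3/(8d+26)}$ and $(\log N/N)^{1/(8d+26)}$ rates. One bookkeeping correction: the translation is not a constant rescaling of $\epsilon$. The present $\epsilon$ should be identified with the square of their length-scale bandwidth, which is why the hypotheses are imposed on $\epsilon^{1/2}$ and the cited rates appear here as $\epsilon^{3/4}$ and $\epsilon^{1/4}$. With the $4\epsilon$ in $\tilde k_\epsilon$ the normalizing factor is then exactly one, so $\frac{1}{\epsilon}\log\lambda_j^{\epsilon,N}$ targets $\nu_j$ itself rather than a multiple of it.
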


Recall from Remark~\ref{DMconnection} that $\varphi_j^{DM}= \hat{q}_{\epsilon,N}^{-1/2}\varphi_j^{\epsilon,N}$, where $\varphi_j^{\epsilon,N}$ is the eigenvector of $\frac{1}{N}\mathbf{K}_{\epsilon,N}$. For uniform distribution $\mu$, we shall see that $q^{\epsilon,N}$ is approximately constant with error smaller than the error rate in \eqref{eiv_rate}, so \eqref{eiv_rate} becomes,
\begin{equation} \left|\varphi^{\epsilon,N}_j(\bx_i) - \alpha_j\varphi_j(\bx_i) \right| = O(\epsilon^{1/4}) + O\left(\frac{1}{\epsilon^{d+3}}\left(\frac{\log N}{N}\right)^{\frac{1}{2}}\right),\label{eiv_rate2}
\end{equation}
where $\left\{\varphi_j^{\epsilon,N}\right\}$ forms an orthonormal basis of $L^2(\mu_N)$.

\begin{assu}\label{assumption1}
For our analysis that uses the spectral convergence result we will choose a kernel bandwidth,
\BEA
h\asymp \left(\frac{\log N}{N}\right)^{\frac{1}{d}} \ll \left(\frac{\log N}{N}\right)^{\frac{1}{4d+13}} \ll \epsilon^{1/2} \leq G(N,\lambda_N),\label{eps_N}
\EEA
and refers to \eqref{eig_rate} as the spectral error bound. This kernel bandwidth also larger than the fill distance, $h$.
\end{assu}
We should point out that while faster convergence rate, $\left(\frac{\log N}{N}\right)^{\frac{1}{d+4}}$, has been reported in literature \cite{calder2022improved,calder2022lipschitz}, their result is not directly related to our approach since their result is based on a compactly supported kernels and they consider the unnormalized Graph-Laplacian, not the DM normalization. A closely related study \cite{Peoples2025} that normalizes Gaussian kernel also achieves this fast convergence rate for the eigenvalues, but with a slower convergence rate for eigenvectors in $L^2(\mu_N)$. 

The simplicity assumption on the eigenvalues in Lemma~\ref{lem:spectral} is only needed to state
eigenfunction convergence mode-by-mode, since a repeated eigenspace has no
canonical orthonormal basis. In the presence of eigenvalue multiplicities, the
same spectral convergence should instead be interpreted as convergence of the
corresponding eigenspaces with constants depending on the gap between this eigenspace and the rest of the spectrum. In particular, a repeated eigenvalue does not affect the limiting heat kernel.

\section{Limiting DM kernel under uniform sampled data}\label{sec3}

Let $H$ be the heat kernel of the Laplace--Beltrami operator given by
\BEA
H(\bx,\by;t)=\sum_{j=0}^{\infty}e^{\nu_jt}\varphi_j(\bx)\varphi_j(\by), \label{eq:heatkernel}
\EEA
where the eigenvalues are ordered as
$\nu_0=0\ge \nu_1\ge \nu_2\ge\cdots$ and $\nu_j\to -\infty$.
For every fixed $t>0$, define the positive-time empirical diffusion kernel as
\BEA
H_{\epsilon,N}(\bx,\by;t)
=\sum_{j=0}^{N-1}\left(\lambda_j^{\epsilon,N}\right)^{t/\epsilon}
\psi_j^{\epsilon,N}(\bx)\psi_j^{\epsilon,N}(\by).\label{eq:empiricalheatkernel}
\EEA
Here $\lambda_j^{\epsilon,N}$ denotes the eigenvalue of the self-adjoint
heat-step diffusion maps operator, $T_{\epsilon,N}$ as discussed in Section~\ref{sec2}, ordered as
\[
1=\lambda_0^{\epsilon,N}\ge \lambda_1^{\epsilon,N}\ge\cdots\ge
\lambda_{N-1}^{\epsilon,N}\ge 0.
\]
The main result in this section is reported in the following theorem, that is, $H_{\epsilon,N} \to H$ for large enough $t>0$ under as $N\to \infty, \epsilon \to 0$ the uniform sampling assumption, extending the convergence in $L^\infty(\bx)$ reported in \cite{dunson2021spectral}. 

\comment{
\begin{theo}\label{thm1}
Assume that $\mathcal M$ is closed, that the data are sampled uniformly on
$\mathcal M$, and that $\epsilon$ satisfies
Assumption~\ref{assumption1}. Suppose also that, for every fixed $s>0$,
\[
    \sup_{\bx\in\mathcal M}H_{\epsilon,N}(\bx,\bx;s)\le C_s
\]
with probability tending to one. Then, for every fixed $t>0$,
\[
    \|H_{\epsilon,N}(\cdot,\cdot;t)-H(\cdot,\cdot;t)\|_{L^\infty(\mathcal M\times\mathcal M)}
    \longrightarrow 0
\]
in probability. 
\end{theo}
}

\begin{theo}\label{thm1}
Assume that $\mathcal M$ is closed, that the data are sampled uniformly on
$\mathcal M$, and that $\epsilon$ satisfies
Assumption~\ref{assumption1}. In addition, we let,
\[
C\left( \frac{\log N}{N}\right)^s \leq \epsilon^{1/2} \leq \min \left\{G(N,\Gamma_N),N^{-4}(-\nu_N)^{-\frac{d-1}{2}}\right\},
\]
where $s = \min\left\{\frac{1}{4d+13},\frac{2}{d^2+3d}\right\}$. For $t \geq \frac{8\log N}{cN^{2/d}}$, with probability higher than $1-\frac{22}{N}-\frac{5}{N^2}$,
\[
    \|H_{\epsilon,N}(\cdot,\cdot;t)-H(\cdot,\cdot;t)\|_{L^\infty(\mathcal M\times\mathcal M)}
    \leq \frac{C_1}{N}(t + 1)e^{t\nu_1} + C_2 \epsilon^{1/4} + C_3 \frac{1}{N}e^{-\frac{cN^{2/d}t}{2}},
\]
where the constants $C_1, C_2, C_3 >0$ are independent of $N,t,$ and $\epsilon>0$, but they depend on the dimension $d$, the Ricci curvature, diameter and volume of $\mathcal{M}$.
\end{theo}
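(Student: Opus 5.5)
I would split the error into a low-frequency part, a high-frequency part, and a heat-kernel tail. Write
\[
H_{\epsilon,N}-H=\sum_{j=0}^{N-1}\Big[(\lambda_j^{\epsilon,N})^{t/\epsilon}\psi_j^{\epsilon,N}(\bx)\psi_j^{\epsilon,N}(\by)-e^{\nu_jt}\varphi_j(\bx)\varphi_j(\by)\Big]-\sum_{j\ge N}e^{\nu_jt}\varphi_j(\bx)\varphi_j(\by).
\]

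\textbf{Tail term.} I would control it with Weyl's law $-\nu_j\ge c j^{2/d}$ and the H\"ormander/Li--Yau sup-norm bound $\|\varphi_j\|_\infty\le C(-\nu_j)^{(d-1)/4}$; the constants depend on Ricci curvature, diameter and volume, which explains the dependence in the statement. Comparing the sum to an integral gives
\[
\sum_{j\ge N}e^{\nu_j t}\|\varphi_j\|_\infty^2\lesssim \int_N^\infty e^{-cs^{2/d}t}\,s^{(d-1)/d}\,ds .
\]
The hypothesis $t\ge 8\log N/(cN^{2/d})$ makes $e^{-cN^{2/d}t/2}\le N^{-4}$, which absorbs the polynomial prefactors. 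This leaves $C_3N^{-1}e^{-cN^{2/d}t/2}$.

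\textbf{Low-frequency term.} For each $j<N$ I would bound two differences separately.
\begin{itemize}
\item Eigenvalues: $(\lambda_j^{\epsilon,N})^{t/\epsilon}=e^{t\nu_j^{\epsilon,N}}$. Lemma~\ref{lem:spectral} together with the mean value theorem gives $|e^{t\nu_j^{\epsilon,N}}-e^{t\nu_j}|\le t\,e^{t\max(\nu_j,\nu_j^{\epsilon,N})}|\nu_j-\nu_j^{\epsilon,N}|$.
\item Eigenfunctions: I need $\psi_j^{\epsilon,N}$ close to $\varphi_j$ uniformly on all of $\mathcal M$, not just on $X$. Since $\psi_j^{\epsilon,N}=(\lambda_j^{\epsilon,N})^{-1}S_{\epsilon,N}\varphi_j^{\epsilon,N}$, I would insert \eqref{eiv_rate2} into the Nystr\"om formula. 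I would then compare $S_{\epsilon,N}\varphi_j$ with the continuous operator applied to $\varphi_j$, using a Hoeffding/Bernstein bound on a net of $\mathcal M$ plus Lipschitz continuity in $\bx$; this is where the $1/N$-probability events enter. Finally I would use the asymptotic expansion of the CDM kernel (Appendix~\ref{app:A}) to get $\int k_\epsilon(\cdot,\by)\varphi_j(\by)\,d\mu=e^{\epsilon\nu_j}\varphi_j+O(\epsilon)$.
\end{itemize}
Under uniform sampling $\hat q_{\epsilon,N}$ is nearly constant, so the normalizations do not spoil this. Summing over $j$ with the weights $e^{t\nu_j}$ gives $C_2\epsilon^{1/4}$ for the eigenfunction errors. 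The condition $\epsilon^{1/2}\le N^{-4}(-\nu_N)^{-(d-1)/2}$ is tuned so that the number of modes $N$ times $\sup_j\|\varphi_j\|_\infty^2$ times the per-mode rate is dominated by $\epsilon^{1/4}$ plus $N^{-1}$-type terms. The $j=0$ mode cancels exactly. For $j\ge1$, the eigenvalue error is bounded by $t e^{t\nu_1}$ times the rate. The lower bound $\epsilon^{1/2}\ge C(\log N/N)^s$ with $s\le 2/(d^2+3d)$ makes $\epsilon^{-(d+5/2)}(\log N/N)^{1/2}\lesssim N^{-1}$ after summation, giving $C_1N^{-1}(t+1)e^{t\nu_1}$.

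\textbf{Main obstacle: uniformity in $j$ up to $N$.} Lemma~\ref{lem:spectral} is stated for fixed $j$, with the gap condition $G(K,\Gamma_K)$. I would apply it with $K=N$, which is legitimate by the assumed $\epsilon^{1/2}\le G(N,\Gamma_N)$, so that all $N$ modes converge simultaneously. A union bound over the $N$ modes then turns the $N^{-2}$ failure probabilities into the stated $1-22/N-5/N^2$. A second difficulty is that for large $j$ the empirical eigenvalue $\lambda_j^{\epsilon,N}$ might not decay like $e^{\epsilon\nu_j}$. I would handle this by using the Lemma's eigenvalue bound to get $\nu_j^{\epsilon,N}\le \nu_j/2$ for $j\le N$. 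Then the empirical high modes are also damped by $e^{-cj^{2/d}t/2}$, which matches the factor $\tfrac12$ in the exponent of the third term.
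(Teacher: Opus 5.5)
Your skeleton matches the paper's. You use the same split into the first $N$ modes and the continuum tail $\sum_{j\ge N}e^{t\nu_j}\varphi_j(\bx)\varphi_j(\by)$. You control the tail by Weyl's law and the H\"ormander sup-norm bound; the paper simply quotes Section~6 of \cite{dunson2021spectral} for $C_3\tfrac{N}{2}A(t)e^{-A(t)}$ with $A(t)=cN^{2/d}t$. And you use $\epsilon^{1/2}\le G(N,\Gamma_N)$ to make all $N$ modes converge.

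The factor $\tfrac12$ in $e^{-cN^{2/d}t/2}$ comes only from writing $e^{-A}=e^{-A/2}e^{-A/2}$ and letting $t\ge 8\log N/(cN^{2/d})$ absorb $\tfrac{N}{2}A(t)$. Since $H_{\epsilon,N}$ has no modes with $j\ge N$, your damping of ``empirical high modes'' plays no role there.

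The genuine difference is the off-sample eigenfunction bound. Proposition~\ref{prop:err_eigfun} picks $\bx_i\in B_h(\bx)$ and controls $\Psi_j=\sqrt{\lambda_j^{\epsilon,N}}\psi_j^{\epsilon,N}$ deterministically. It uses the reproducing property, $|\Psi_j(\bx)-\Psi_j(\bx_i)|^2\le k_{\epsilon,N}(\bx,\bx)+k_{\epsilon,N}(\bx_i,\bx_i)-2k_{\epsilon,N}(\bx,\bx_i)$, together with the kernel Lipschitz bound of Lemma~\ref{lem:localerror_k}. This costs an interpolation error $\epsilon^{-1/2-d/4}(\log N/N)^{1/d}$, and absorbing it into $\epsilon^{1/4}$ is exactly what the exponent $2/(d^2+3d)$ in $s$ is for.

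Your Nystr\"om-plus-concentration route would avoid that term, and hence that constraint. The price is that its random events depend on $j$. Each must fail with probability $O(N^{-2})$ rather than $O(N^{-1})$, otherwise the union over $j<N$ is vacuous. You would also have to track the $j$-dependence of the CDM-expansion remainder up to $j=N$.

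The genuine gap is your bookkeeping of the low-frequency sum; three steps fail as stated.

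(i) The $j=0$ mode does not cancel. Although $\lambda_0^{\epsilon,N}=1$, the eigenfunction $\psi_0^{\epsilon,N}=c\sqrt{\hat q_{\epsilon,N}}$ is not constant. In the paper this mode is precisely the source of the $C_2\epsilon^{1/4}$ term.

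(ii) Summing the $j\ge1$ eigenfunction errors against $e^{t\nu_j}$ gives the bound $\epsilon^{1/4}\sum_{1\le j<N}e^{t\nu_j}\|\varphi_j\|_\infty$. This behaves like a truncated heat trace and is at least of order $N(\log N)^{-d/2}\epsilon^{1/4}$ at $t\approx 8\log N/(cN^{2/d})$. Its constant depends on $t$, so it is not the $t$-independent $C_2\epsilon^{1/4}$.

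(iii) The lower bound on $\epsilon$ does not make the eigenvalue errors $O(N^{-1})$ after summation. With $s\le 1/(4d+13)$ it only gives $\epsilon^{-(d+5/2)}(\log N/N)^{1/2}\le C\epsilon^{3/4}$ per mode, i.e.\ $|e^{\frac{t}{2}\nu_j^{\epsilon,N}}-e^{\frac{t}{2}\nu_j}|\le\hat C t\epsilon^{3/4}e^{\frac{t}{2}\nu_1}$. Summing $N$ such terms does not give $O(N^{-1})$; the $N^{-1}$ must come from the upper bound on $\epsilon$.

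The fix, as in the paper, treats the eigenvalue and eigenfunction errors of each mode $j\ge1$ together:
\begin{itemize}
\item Bound $e^{t\nu_j}\le e^{t\nu_1}$ and use a single power of $\|\varphi_j\|_\infty\lesssim(-\nu_N)^{(d-1)/4}$, coming from $|ab-cd|\le|a-c||b|+|c||b-d|$.
\item This gives each mode a bound $\tilde C(-\nu_N)^{(d-1)/4}\epsilon^{1/4}(1+\hat C t\epsilon^{3/4})e^{t\nu_1}$.
\item Multiply by $N$ and use $\epsilon^{1/4}\le N^{-2}(-\nu_N)^{-(d-1)/4}$.
\end{itemize}
That produces $C_1N^{-1}(t+1)e^{t\nu_1}$, with the $t$ coming from the eigenvalue errors and the $1$ from the eigenfunction errors. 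Your $\sup_j\|\varphi_j\|_\infty^2$ would only give $N^{-1}(-\nu_N)^{(d-1)/4}$.

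One caveat applies equally to the paper. Since $s\le 1/17<4$, the lower bound $C(\log N/N)^s$ exceeds $N^{-4}(-\nu_N)^{-(d-1)/2}$ for all large $N$. So the admissible window for $\epsilon$ is eventually empty, and neither argument checks this.
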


\begin{remark}
Let us comment on the additional assumptions in the Theorem above. The lower bound $\epsilon^{1/2} \geq C\left(\frac{\log N}{N}\right)^{\frac{2}{d^2+3d}}$ corresponds to the Nystr\"om interpolation error of the estimation of eigenfunctions, which dominates the error when the intrinsic dimension $d>9$ (see Remark~\ref{rem:error_analysis}). The upper bound $\epsilon^{1/2}\leq N^{-4}(-\nu_N)^{-\frac{d-1}{2}}$ is to ensure that the error of $H_{\epsilon,N}$ in estimating the leading $N$ terms of the heat kernel $H$ is uniformly bounded as $N\to \infty$. Finally, the lower bound $t\geq \frac{8\log N}{cN^{2/d}}$ is to ensure that the tail of infinite series (beyond the leading $N$ terms) of the heat kernel remains bounded as $N \to \infty$.
\end{remark}

The following two subsections~\ref{sec31}-\eqref{sec32} provide intermediate propositions that are needed to prove this theorem. We will report the proof of this Theorem in Section~\ref{sec33}.

\subsection{DM kernel approximates a Gaussian kernel} \label{sec31}In this subsection, we first show that under the uniform sampling assumption, the DM kernel approximates a scaled Gaussian kernel. 

First, we derive the relation between $q_\epsilon$ (and $\hat{q}_\epsilon$), both defined in \eqref{analyticdmalgebra}, and the underlying sampling density $q$. To carry this step, we recall the following asymptotic expansion \cite{coifman2006diffusion}: For $f\in C^3(\mathcal{M})$  and $\bx\in \mathcal{M}$, where $\mathcal{M}\subset \mathbb{R}^n$ is a $d$-dimensional smooth manifold,
\BEA
G_\epsilon f(\bx):=\epsilon^{-d/2}\int_{\mathcal M}\tilde k_\epsilon(\bx,\by)f(\by)\,d\mathrm{Vol}(\by)
=m_0 f(\bx)+\epsilon m_2\big(\omega(\bx)f(\bx)+\Delta_g f(\bx)\big)+O(\epsilon^2),\label{eq:Ge}
\EEA
where $m_0= \int_{\BR^d}h(z)dz , m_2=\frac{1}{2}\int_{\BR^d}z_1^2h(z)dz$ are constants with $h(z) = \exp\left(-\|z\|^2\right)$ with $z\in \BR^d$ and $\omega$ depends on the induced geometry of $\mathcal{M}$.

If we increase the regularity, $f\in C^4(\mathcal{M})$ and bounded geometry coefficients (in particular,
$\omega\in L^\infty(\mathcal M)$), the remainder can be taken uniformly in $x$ on
compact sets:
\BEA
\sup_{\bx\in\mathcal M}\left|G_\epsilon f(\bx)-m_0 f(\bx)-\epsilon m_2\big(\omega(\bx)f(\bx)+\Delta_g f(\bx)\big)\right|
=O(\epsilon^2).\label{uniform_DM_asymptotic_bound}
\EEA

Then it is immediately clear that
\[
\epsilon^{-d/2}q_\epsilon(\bx)=G_\epsilon q(\bx)= m_0q(\bx)+\epsilon m_2\big(\omega(\bx)q(\bx)+\Delta_g q(\bx)\big)+O(\epsilon^2),
\]
hence
\begin{eqnarray}
q_\epsilon(\bx)=\epsilon^{d/2}\left(m_0\,q(\bx)+O\!\left(\epsilon\right)\right).    \label{eq:qe_asymp}
\end{eqnarray}
Furthermore,
\[
\hat q_\epsilon(\bx)=\frac{1}{q_\epsilon(\bx)}\int_{\mathcal M}\tilde k_\epsilon(\bx,\by)\frac{q(\by)}{q_\epsilon(\by)}\,d\mathrm{Vol}(\by)
=\frac{\epsilon^{d/2} G_\epsilon\!\left(q/q_\epsilon\right)(\bx)}{q_\epsilon(\bx)},
\]
and (see Appendix~\ref{app:A} for details),
\[
\frac{q(\bx)}{q_\epsilon(\bx)}
=\epsilon^{-d/2}\left[\frac{1}{m_0}-\epsilon\frac{m_2}{m_0^2}\left(\omega(\bx)+\frac{\Delta_gq(\bx)}{q(\bx)}\right)+O(\epsilon^2)\right].
\]
Applying $G_\epsilon$ to this function and dividing by $q_\epsilon$ yields
\begin{eqnarray}
G_\epsilon(q/q_\epsilon)(\bx)
=\epsilon^{-d/2}
\left[1-\epsilon\frac{m_2}{m_0}\frac{\Delta_gq(\bx)}{q(\bx)}+O(\epsilon^2)\right] = \epsilon^{-d/2}\left( 1 - O\!\left(\epsilon\right)\right),
\end{eqnarray}
then
\begin{eqnarray}
\hat q_\epsilon(\bx)
= \frac{\epsilon^{d/2}G_\epsilon(q/q_\epsilon)(\bx)}{q_\epsilon(\bx)} = \epsilon^{-d/2}\left( \frac{1}{m_0q(\bx)}+O\!\left(\epsilon\right)\right).\label{eq:qhate_asymp} 
\end{eqnarray}

Next, we employ the standard KDE uniform estimate following \cite{gine2002rates}  to deduce errors between $q_{\epsilon,N}$ and $q_\epsilon$ as well as between $\hat{q}_{\epsilon,N}$ and $\hat{q}_\epsilon$.

\begin{lem}[Uniform concentration for $q_{\epsilon,N}$ and $\hat{q}_{\epsilon,N}$]\label{lem:3.1}
Assume $q\in C^2(\mathcal M)$, $q\ge q_{\min}>0$, and \eqref{eps_N}, that is, $\frac{\log N}{N \epsilon^{d/2}}\to 0$.

Then with probability higher than $1-\frac{1}{N}$,
\begin{eqnarray}
\sup_{\bx\in\mathcal M}\big|q_{\epsilon,N}(\bx)-q_\epsilon(\bx)\big|
=O\!\left(\epsilon^{d/2}\sqrt{\frac{\log N}{N\epsilon^{d/2}}}\right),\label{eq:error_q_en}\\
\sup_{\bx\in\mathcal M}\big|\hat q_{\epsilon,N}(\bx)-\hat q_\epsilon(\bx)\big|
= O\!\left(\epsilon^{-d/2}\sqrt{\frac{\log N}{N\epsilon^{d}}}\right). \label{eq:error_qhat_en}
\end{eqnarray}
\end{lem}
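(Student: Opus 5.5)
For \eqref{eq:error_q_en} we would apply a uniform kernel density estimation bound in the spirit of \cite{gine2002rates}, working with the normalized quantities $\epsilon^{-d/2}q_{\epsilon,N}$ and $\epsilon^{-d/2}q_\epsilon$. For fixed $\bx$ write
\[
\epsilon^{-d/2}q_{\epsilon,N}(\bx)-\epsilon^{-d/2}q_\epsilon(\bx)=\frac1N\sum_{j=1}^N Y_j(\bx),\qquad Y_j(\bx)=\epsilon^{-d/2}\tilde k_\epsilon(\bx,\bx_j)-G_\epsilon q(\bx).
\]
The $Y_j(\bx)$ are i.i.d., mean zero, and bounded by $\epsilon^{-d/2}$. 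Their variance is at most $\epsilon^{-d}\int \tilde k_\epsilon^2 q\,d\mathrm{Vol}=\epsilon^{-d/2}G_{\epsilon/2}q(\bx)\,2^{-d/2}=O(\epsilon^{-d/2})$ by \eqref{eq:Ge}. Bernstein's inequality then gives, at a single point, a deviation of order $\sqrt{\log N/(N\epsilon^{d/2})}+\log N/(N\epsilon^{d/2})$. The second term is dominated because $\log N/(N\epsilon^{d/2})\to 0$ by \eqref{eps_N}.

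To make this uniform over $\mathcal M$ we would use a net argument. Take a $\delta$-net of $\mathcal M$ with $\delta=N^{-2}\epsilon^{(d+1)/2}$; its cardinality is $O(\delta^{-d})$, which is polynomial in $N$ because $\epsilon$ decays at most polynomially. Both $\epsilon^{-d/2}q_{\epsilon,N}$ and $\epsilon^{-d/2}q_\epsilon$ are Lipschitz with constant $O(\epsilon^{-(d+1)/2})$, since $|\nabla_\bx\tilde k_\epsilon|\lesssim\epsilon^{-1/2}$. The discretization error is therefore $O(N^{-2})$, which is negligible. A union bound with the constant in the exponent chosen large gives failure probability at most $1/(2N)$. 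Multiplying back by $\epsilon^{d/2}$ yields \eqref{eq:error_q_en}.

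For \eqref{eq:error_qhat_en} we would split
\[
\hat q_{\epsilon,N}-\hat q_\epsilon=\frac{1}{q_{\epsilon,N}(\bx)}\Big[\frac1N\sum_j\frac{\tilde k_\epsilon(\bx,\bx_j)}{q_{\epsilon,N}(\bx_j)}-\int\frac{\tilde k_\epsilon(\bx,\by)}{q_\epsilon(\by)}q\,d\mathrm{Vol}\Big]+\Big(\frac1{q_{\epsilon,N}}-\frac1{q_\epsilon}\Big)\int\frac{\tilde k_\epsilon q}{q_\epsilon}\,d\mathrm{Vol}.
\]
On the event of the first part, \eqref{eq:qe_asymp} together with $q\ge q_{\min}$ gives $q_{\epsilon,N},q_\epsilon\asymp\epsilon^{d/2}$. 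Writing $r:=\sqrt{\log N/(N\epsilon^{d/2})}$, we also have $|1/q_{\epsilon,N}-1/q_\epsilon|\lesssim\epsilon^{-d/2}r$. Since $\int\tilde k_\epsilon q/q_\epsilon\,d\mathrm{Vol}=O(1)$, the second term is $O(\epsilon^{-d/2}r)$.

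Inside the bracket of the first term, we first replace $1/q_{\epsilon,N}(\bx_j)$ by $1/q_\epsilon(\bx_j)$. This costs at most $\frac1N\sum_j\tilde k_\epsilon(\bx,\bx_j)\,\epsilon^{-d/2}r=q_{\epsilon,N}(\bx)\,\epsilon^{-d/2}r$, and after dividing by $q_{\epsilon,N}(\bx)$ the contribution is $O(\epsilon^{-d/2}r)$. What remains is an empirical average of $\tilde k_\epsilon(\bx,\cdot)/q_\epsilon(\cdot)$ minus its expectation. This is the same KDE problem as in the first part, now with a smooth bounded weight $1/(\epsilon^{d/2}q_\epsilon)$, so the same Bernstein-plus-net argument gives deviation $O(r)$ for the bracket. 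After dividing by $q_{\epsilon,N}\asymp\epsilon^{d/2}$ this contributes $O(\epsilon^{-d/2}r)$.

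Every term is therefore $O(\epsilon^{-d/2}r)$, and $r\le\sqrt{\log N/(N\epsilon^{d})}$ because $\epsilon<1$, which gives \eqref{eq:error_qhat_en}. The union of the two failure events has probability at most $1/N$.

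The main obstacle is the bookkeeping in the second estimate. The weights $1/q_{\epsilon,N}(\bx_j)$ depend on all the samples, so the summands are not independent. The plan handles this by first moving deterministically to the weights $1/q_\epsilon(\bx_j)$, using the uniform bound from the first part, before applying concentration. We also need the net cardinality to remain polynomial in $N$ so the union bound costs only a $\log N$ factor.
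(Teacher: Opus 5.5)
Your proof is correct, but it takes a more elementary and more explicit route than the paper.

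For \eqref{eq:error_q_en}, the paper applies a Talagrand-type inequality for VC-type classes (Lemma~\ref{lem:talagrand}) to $\{\tilde k_\epsilon(\bx,\cdot):\bx\in\mathcal M\}$, with envelope $1$ and variance $O(\epsilon^{d/2})$. Uniformity in $\bx$ is then supplied by the entropy hypothesis. You instead use pointwise Bernstein, a $\delta$-net, and the deterministic Lipschitz constant $O(\epsilon^{-(d+1)/2})$. This is self-contained. The net is polynomial in $N$ from the lemma's own hypothesis, since $\log N/(N\epsilon^{d/2})\to0$ forces $\epsilon\ge N^{-2/d}$ eventually.

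The real difference is in \eqref{eq:error_qhat_en}. The paper ``repeats the argument'' for the class $\tilde k_\epsilon(\bx,\by)/(q_{\epsilon,N}(\bx)q_{\epsilon,N}(\by))$. It writes this as $\tilde k_\epsilon/(q_\epsilon q_\epsilon)$ plus an $O(\cdot)$ perturbation and uses the crude bounds $b_\epsilon\lesssim\epsilon^{-d}$, $\sigma_\epsilon^2\lesssim\epsilon^{-2d}$. These give exactly the stated rate $\epsilon^{-d/2}\sqrt{\log N/(N\epsilon^{d})}$, but the argument leaves implicit that this function class depends on the sample.

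Your decomposition removes the random weights $1/q_{\epsilon,N}(\bx_j)$ deterministically before concentrating; the cost $q_{\epsilon,N}(\bx)\,\epsilon^{-d/2}r$ cancels against the prefactor $1/q_{\epsilon,N}(\bx)$. You also keep the sharper variance of the normalized summands. This yields the stronger bound $O\bigl(\epsilon^{-d/2}\sqrt{\log N/(N\epsilon^{d/2})}\bigr)$, which implies the stated one.

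Two cosmetic points. First, the bounded weight in your second concentration step is $\epsilon^{d/2}/q_\epsilon$, not $1/(\epsilon^{d/2}q_\epsilon)$; your subsequent orders match the former. Second, $q_\epsilon\asymp\epsilon^{d/2}$ follows directly from $q_{\min}\le q\le\|q\|_\infty$ and $\int_{\mathcal M}\tilde k_\epsilon(\bx,\by)\,d\mathrm{Vol}(\by)\asymp\epsilon^{d/2}$. So you need not invoke \eqref{eq:qe_asymp}, whose uniform remainder was stated under more smoothness than $q\in C^2$.
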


See Appendix~\ref{proof:lemma3.1} for the proof. With this lemma, we conclude the following result for this subsection:

\begin{prop}\label{prop:Gaussapprox}
Let the assumption~\ref{assumption1}, specifically, $\epsilon^{1/2}\gg h$, be valid. Also, let the sampling density $q:\mathcal{M}\to \BR$ be uniform, $q(\bx)=1(\bx)/\textup{Vol}(\mathcal{M}),$ for all $\bx\in \mathcal{M}$. With probability higher than $1-\frac{4}{N}$,
\BEA
\sup_{\bx,\by\in\mathcal M}
\left|k_{\epsilon,N}(\bx,\by)-\frac{\mathrm{Vol}(\mathcal M)}{m_0 \epsilon^{d/2}}\tilde{k}_{\epsilon}(\bx,\by)\right|
\le C\epsilon^{-d/2}\left(\epsilon+\sqrt{\frac{\log N}{N\epsilon^{d}}}\right),\label{eq:unifconv}
\EEA
where $C>0$ is independent of $N$ and $\epsilon$.
\end{prop}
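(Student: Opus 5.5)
The bound will come from writing $k_{\epsilon,N}$ explicitly in terms of $\tilde k_\epsilon$ and the two empirical normalizations, and then controlling those normalizations with Lemma~\ref{lem:3.1} together with the asymptotics \eqref{eq:qe_asymp} and \eqref{eq:qhate_asymp}. Unwinding \eqref{scalarkernel} gives
\[
k_{\epsilon,N}(\bx,\by)=\frac{\tilde k_\epsilon(\bx,\by)}{q_{\epsilon,N}(\bx)q_{\epsilon,N}(\by)\sqrt{\hat q_{\epsilon,N}(\bx)\hat q_{\epsilon,N}(\by)}}=\tilde k_\epsilon(\bx,\by)\,R_N(\bx)R_N(\by),
\]
where $R_N(\bx):=\big(q_{\epsilon,N}(\bx)\sqrt{\hat q_{\epsilon,N}(\bx)}\big)^{-1}$. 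Let $R(\bx):=\big(q_\epsilon(\bx)\sqrt{\hat q_\epsilon(\bx)}\big)^{-1}$ be the continuous analogue. The comparison then splits into
\[
\big|k_{\epsilon,N}-c\,\tilde k_\epsilon\big|\le \tilde k_\epsilon\,\big|R_N(\bx)R_N(\by)-R(\bx)R(\by)\big|+\tilde k_\epsilon\,\big|R(\bx)R(\by)-c\big|,\qquad c=\frac{\mathrm{Vol}(\mathcal M)}{m_0\epsilon^{d/2}}.
\]
Since $0<\tilde k_\epsilon\le 1$, it suffices to bound the two products uniformly.

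\textbf{Bias term.} With $q\equiv 1/\mathrm{Vol}(\mathcal M)$, \eqref{eq:qe_asymp} gives $q_\epsilon=\epsilon^{d/2}\big(m_0/\mathrm{Vol}(\mathcal M)+O(\epsilon)\big)$, and \eqref{eq:qhate_asymp} gives $\hat q_\epsilon=\epsilon^{-d/2}\big(\mathrm{Vol}(\mathcal M)/m_0+O(\epsilon)\big)$. Both are uniform in $\bx$ by \eqref{uniform_DM_asymptotic_bound}; since $q$ is constant, $\Delta_g q=0$ and only $\omega\in L^\infty$ enters. It follows that
\[
R=\epsilon^{-d/4}\Big(\big(\mathrm{Vol}(\mathcal M)/m_0\big)^{1/2}+O(\epsilon)\Big),
\]
hence $R(\bx)R(\by)=c+O(\epsilon^{1-d/2})$ uniformly. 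This produces the $C\epsilon^{-d/2}\epsilon$ part of \eqref{eq:unifconv}.

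\textbf{Variance term.} Lemma~\ref{lem:3.1} gives, with probability at least $1-1/N$ (I expect to need it, or its two pieces, up to four times via union bounds, hence $1-4/N$),
\[
|q_{\epsilon,N}-q_\epsilon|\le \epsilon^{d/2}\delta_1,\quad \delta_1=\sqrt{\tfrac{\log N}{N\epsilon^{d/2}}},\qquad |\hat q_{\epsilon,N}-\hat q_\epsilon|\le \epsilon^{-d/2}\delta_2,\quad \delta_2=\sqrt{\tfrac{\log N}{N\epsilon^{d}}}.
\]
Relative to $q_\epsilon\asymp\epsilon^{d/2}$ and $\hat q_\epsilon\asymp\epsilon^{-d/2}$, these are relative errors $O(\delta_1)$ and $O(\delta_2)$, with $\delta_1\le\delta_2$ for $\epsilon<1$. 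Assumption~\ref{assumption1} makes $\delta_2\to0$. Then $q_{\epsilon,N}$ and $\hat q_{\epsilon,N}$ stay within a factor $1/2$ of their continuous counterparts, so the map $(a,b)\mapsto 1/(a\sqrt b)$ is Lipschitz on the relevant range with relative Lipschitz constant $O(1)$. This yields $R_N=R\,(1+O(\delta_2))$ uniformly, and so
\[
R_N(\bx)R_N(\by)-R(\bx)R(\by)=R(\bx)R(\by)\,O(\delta_2)=O(\epsilon^{-d/2}\delta_2).
\]
Adding the bias and variance terms gives \eqref{eq:unifconv}.

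The main obstacle is justifying the regime $\delta_2\to0$. Lemma~\ref{lem:3.1} is stated under $\tfrac{\log N}{N\epsilon^{d/2}}\to0$, while I need the stronger $\tfrac{\log N}{N\epsilon^{d}}\to0$. The plan is to extract this from \eqref{eps_N}: $\epsilon^{1/2}\gg(\log N/N)^{1/(4d+13)}$ gives $\epsilon^d\gg(\log N/N)^{2d/(4d+13)}$, and $2d/(4d+13)<1$, so the condition holds. If $\delta_2$ is not small, the bound is trivial anyway after enlarging $C$, provided $\hat q_{\epsilon,N}$ stays bounded below; that lower bound is the point I would check first. A secondary point is making the $O(\epsilon)$ remainders in $q_\epsilon$ and $\hat q_\epsilon$ uniform in $\bx$, which rests on \eqref{uniform_DM_asymptotic_bound} applied to the constant function and to $q/q_\epsilon$, whose $C^4$ norm is controlled because $q$ is constant.
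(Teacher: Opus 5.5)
Your proposal is correct and is essentially the paper's argument: the paper likewise combines Lemma~\ref{lem:3.1} with \eqref{eq:qe_asymp} and \eqref{eq:qhate_asymp} to show that $1/\bigl(q_{\epsilon,N}(\bx)q_{\epsilon,N}(\by)\sqrt{\hat q_{\epsilon,N}(\bx)\hat q_{\epsilon,N}(\by)}\bigr)$ equals $\frac{\mathrm{Vol}(\mathcal M)}{m_0\epsilon^{d/2}}\bigl[1+O(\epsilon)+O\bigl(\sqrt{\log N/(N\epsilon^{d})}\bigr)\bigr]$ uniformly, and then uses $\tilde k_\epsilon\le 1$. Your explicit bias/variance split and your check that Assumption~\ref{assumption1} implies $\log N/(N\epsilon^{d})\to 0$ (which the paper leaves implicit) only make that argument more careful; also, for the uniform $O(\epsilon)$ remainder in $\hat q_\epsilon$ you do not need $C^4$ control of $q/q_\epsilon$, because the uniform expansion of $q_\epsilon$ together with positivity of $\tilde k_\epsilon$ already suffices.
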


\begin{proof}
Combining results in Lemma~\ref{lem:3.1} with \eqref{eq:qe_asymp} and \eqref{eq:qhate_asymp}, we conclude that w.p.h. $1-N^{-1}$, 
\BEA
q_{\epsilon,N}(\bx) &=& \epsilon^{d/2}\left(m_0q(\bx) + O(\epsilon) + O\left(\sqrt{\frac{\log N}{N \epsilon^{d/2}}}\right)\right), \\
\hat{q}_{\epsilon,N}(\bx) &=& \epsilon^{-d/2}\left(\frac{1}{m_0q(\bx)} + O(\epsilon) + O\left(\sqrt{\frac{\log N}{N \epsilon^{d}}}\right)\right).
\EEA
By the uniform sampling density assumption, $q(\bx)=1(\bx)/\textup{Vol}(\mathcal{M})$, with probability higher than $1-\frac{4}{N}$,
\[
d_{\epsilon,N}(\bx,\by):=\frac{1}{q_{\epsilon,N}(\bx)q_{\epsilon,N}(\by)\sqrt{\hat q_{\epsilon,N}(\bx)\hat q_{\epsilon,N}(\by)}}
=\frac{\mathrm{Vol}(\mathcal M)}{m_0 \epsilon^{d/2}}
\left[1+O(\epsilon)+O\!\left(\sqrt{\frac{\log N}{N\epsilon^{d}}}\right)\right],\]
which implies that the DM kernel is approximately a normalized Gaussian kernel,
\[
k_{\epsilon,N}(\bx,\by)
=\frac{\mathrm{Vol}(\mathcal M)}{m_0\epsilon^{d/2}}\tilde{k}_{\epsilon}(\bx,\by)
\left[1+O(\epsilon)+O\!\left(\sqrt{\frac{\log N}{N\epsilon^{d}}}\right)\right].
\]
The uniform bound is immediate due to \eqref{uniform_DM_asymptotic_bound}, \eqref{eq:error_q_en}, and \eqref{eq:error_qhat_en}.
\end{proof}

\begin{remark}\label{rem:smoothbound}
Here, we discuss the continuity and boundedness of the DM kernel.
\begin{enumerate}
\item[(a)] The DM kernel is smooth, $k_{\epsilon,N}\in C^\infty(\mathcal{M}\times\mathcal{M})$ on a compact manifold $\mathcal{M}$. This is due to the following reason. First, we note that the Gaussian kernel  $\tilde{k}_\epsilon \in C^\infty(\mathcal{M}\times\mathcal{M})$. Its normalization factor $q_{\epsilon,N}>0$ is also smooth since it is a finite sum of a smooth kernel that is also strictly positive on a compact manifold $\mathcal{M}$. Following the same argument for the second normalization, $\hat{q}_{\epsilon,N}$, the DM kernel is smooth on $\mathcal{M}$. In fact, $d_{\epsilon,N}$ in the proof above gives an estimate of a lower bound for the normalization factor of the DM kernel for uniform measure.
\item[(b)] By Proposition 3.1 and the fact that the Gaussian kernel $\tilde{k}_\epsilon \leq 1$, the DM kernel is uniformly bounded,
\end{enumerate}
\end{remark}

\subsection{Convergence of the orthonormal basis of the RKHS  $\mathcal{H}_{\epsilon,N}$}\label{sec32}

In this subsection, we deduce the convergence of the orthonormal basis for $\mathcal{H}_{\epsilon,N}$ to the corresponding orthonormal basis for the RKHS induced by the heat kernel.

Before we prove such a result, we deduce the local Lipschitz bound for the DM kernel. 

\begin{lem}\label{lem:localerror_k}
    Let the assumption in Proposition~\ref{prop:Gaussapprox} to be valid. For any $\bx,\bx'\in B_{h}(\by)\in \mathcal{M}$ and any fixed $ \by\in \mathcal{M}$, where $B_h(\by)$ denotes the geodesic ball with center at $\by$ and radius $h$,  with probability higher than  $1-\frac{5}{N}$,
\[
\left|  k_{\epsilon,N}(\bx,\by)-  k_{\epsilon,N}(\bx',\by)  \right| = O\left(\left(\frac{\log N}{N}\right)^{\frac{2}{d}}\frac{1}{\epsilon^{1+\frac{d}{2}}}\right) .
\]
This bound includes $\bx =\by$ or $\bx'=\by$.
\end{lem}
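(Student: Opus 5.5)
We split the DM kernel into its Gaussian factor and its normalization factor, $k_{\epsilon,N}(\bx,\by)=\tilde k_\epsilon(\bx,\by)\,d_{\epsilon,N}(\bx,\by)$, where $d_{\epsilon,N}$ is the normalization introduced in the proof of Proposition~\ref{prop:Gaussapprox}. Then
\[
k_{\epsilon,N}(\bx,\by)-k_{\epsilon,N}(\bx',\by)=\big(\tilde k_\epsilon(\bx,\by)-\tilde k_\epsilon(\bx',\by)\big)d_{\epsilon,N}(\bx,\by)+\tilde k_\epsilon(\bx',\by)\big(d_{\epsilon,N}(\bx,\by)-d_{\epsilon,N}(\bx',\by)\big),
\]
and we bound the two terms separately. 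By Proposition~\ref{prop:Gaussapprox}, with probability at least $1-\frac{4}{N}$, $d_{\epsilon,N}=O(\epsilon^{-d/2})$ uniformly. The Lemma~\ref{samplingassumption} event, which costs one more $\frac1N$, fixes the scale $h\asymp(\log N/N)^{1/d}$. Together these give the total probability $1-\frac{5}{N}$.

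\emph{Gaussian term.} We use a second-order expansion rather than a plain Lipschitz bound, because the gradient of $\tilde k_\epsilon(\cdot,\by)$ vanishes at $\by$. Write $r=\|\bx-\by\|$ and $r'=\|\bx'-\by\|$, both at most $d_g\le h$. Then
\[
|e^{-r^2/4\epsilon}-e^{-r'^2/4\epsilon}|\le \frac{|r^2-r'^2|}{4\epsilon}\le \frac{h^2}{2\epsilon}.
\]
This is exactly the $h^2/\epsilon$ scaling we need, and it also covers the case $\bx=\by$. Multiplying by $d_{\epsilon,N}=O(\epsilon^{-d/2})$ gives $O(h^2\epsilon^{-1-d/2})$, which is the claimed rate $O\big((\log N/N)^{2/d}\epsilon^{-1-d/2}\big)$.

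\emph{Normalization term.} Since $\tilde k_\epsilon\le1$, it suffices to show that $|d_{\epsilon,N}(\bx,\by)-d_{\epsilon,N}(\bx',\by)|=O(h^2\epsilon^{-1-d/2})$. The factor $d_{\epsilon,N}$ depends on its first argument only through $q_{\epsilon,N}(\bx)^{-1}\hat q_{\epsilon,N}(\bx)^{-1/2}$. We know $q_{\epsilon,N}\asymp\epsilon^{d/2}$ and $\hat q_{\epsilon,N}\asymp\epsilon^{-d/2}$ from the proof of Proposition~\ref{prop:Gaussapprox}. It is therefore enough to bound the relative variations
\[
\frac{|q_{\epsilon,N}(\bx)-q_{\epsilon,N}(\bx')|}{q_{\epsilon,N}}\quad\text{and}\quad \frac{|\hat q_{\epsilon,N}(\bx)-\hat q_{\epsilon,N}(\bx')|}{\hat q_{\epsilon,N}}
\]
by $O(h^2/\epsilon)$.

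\emph{Main obstacle.} The difficulty is that $q_{\epsilon,N}(\bx)-q_{\epsilon,N}(\bx')=\frac1N\sum_j\big(\tilde k_\epsilon(\bx,\bx_j)-\tilde k_\epsilon(\bx',\bx_j)\big)$ involves centers $\bx_j$ far from $\by$. For those centers the first-order variation of the Gaussian is of size $h/\sqrt\epsilon$, not $h^2/\epsilon$. We would handle this in two ways. First, we would write the difference as an average of $\tilde k_\epsilon(\bx_j,\cdot)$ times $\frac{\|\bx'-\bx_j\|^2-\|\bx-\bx_j\|^2}{4\epsilon}$. Second, we would use that the empirical average of the linear term, which is proportional to $\langle \bx-\bx',\by-\bx_j\rangle/\epsilon$, is controlled by the gradient of $q_{\epsilon,N}$. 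Under uniform sampling that gradient is $\nabla(m_0q)+O(\epsilon)$ plus a fluctuation by Lemma~\ref{lem:3.1}, with $\nabla q=0$. Near-cancellation should then leave $O(h^2/\epsilon)$ plus terms absorbed by the rate; the same argument is repeated for $\hat q_{\epsilon,N}$. If this is too delicate, a fallback is to bound the relative variation simply by $O(h/\sqrt\epsilon)$ and check whether the stated rate still dominates under Assumption~\ref{assumption1}. Summing the two terms then completes the proof.
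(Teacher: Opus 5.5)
Your Gaussian term is fine and uses the same mechanism as the paper. The paper applies the mean value theorem to $k_{\epsilon,N}(\cdot,\by)$ and multiplies $\|\nabla_\bx\tilde k_\epsilon\|\le h/(2\epsilon)$ on $B_h(\by)$ by $\|\bx-\bx'\|\lesssim h$. Your direct bound $|e^{-a}-e^{-b}|\le|a-b|$ gives the same $h^2/\epsilon$, and your $\frac4N+\frac1N$ probability count matches.

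The gap is the normalization term. You only sketch it, and the sketch cannot give a relative variation of $O(h^2/\epsilon)$.

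\emph{No gradient control is available.} Lemma~\ref{lem:3.1} bounds only $\sup|q_{\epsilon,N}-q_\epsilon|$ and $\sup|\hat q_{\epsilon,N}-\hat q_\epsilon|$, not gradients. So a ``fluctuation by Lemma~\ref{lem:3.1}'' of $\nabla q_{\epsilon,N}$ would need a separate concentration bound over $\{\nabla_\bx\tilde k_\epsilon(\bx,\cdot)\}$.

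\emph{The $O(\epsilon)$ term is deterministic.} The $O(\epsilon)$ term you record in that gradient does not average out. With $q$ constant, $\nabla q_\epsilon=\epsilon^{1+d/2}m_2q\,\nabla\omega$ to leading order. Hence the relative variation of $q_{\epsilon,N}$ over distance $h$ contains a term of size $\epsilon h\|\nabla\omega\|$ that no cancellation or concentration removes. Equivalently, by \eqref{eq:phi_e} (see the remark after Lemma~\ref{lem:isoembedding}), $\phi=1/(q_\epsilon\sqrt{\hat q_\epsilon})$ satisfies
$\phi(\bx)-\phi(\bx')\approx-\epsilon^{1-d/4}\frac{m_2}{2m_0}\sqrt{\mathrm{Vol}(\mathcal M)/m_0}\,\bigl(\omega(\bx)-\omega(\bx')\bigr)$.
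So your normalization term is of order $h\,\epsilon^{1-d/2}$ whenever $\nabla\omega\neq0$. That is $O(h^2\epsilon^{-1-d/2})$ only if $\epsilon^2\lesssim h$. Assumption~\ref{assumption1}, which this lemma inherits through Proposition~\ref{prop:Gaussapprox}, gives the opposite: $\epsilon^2\gg(\log N/N)^{4/(4d+13)}\gg(\log N/N)^{1/d}\asymp h$.

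\emph{The fallback fails.} A relative variation of $O(h/\sqrt\epsilon)$ yields $O(h\,\epsilon^{-(1+d)/2})$. This exceeds the target by the factor $\sqrt\epsilon/h\to\infty$.

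The paper does not close this step by cancellation either. After the mean value step, it uses Proposition~\ref{prop:Gaussapprox} and Remark~\ref{rem:smoothbound}(a) to write $\|\nabla_\bx k_{\epsilon,N}\|$ as $\frac{\mathrm{Vol}(\mathcal M)}{m_0\epsilon^{d/2}}$ times the Gaussian gradient plus an $\epsilon\|\nabla\omega\|$ correction. It then keeps only ``the leading order'' $h/\epsilon$. By the same comparison ($\epsilon$ versus $h/\epsilon$, i.e.\ $\epsilon^2$ versus $h$), that correction is not lower order under Assumption~\ref{assumption1}.

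So you have found the real crux, but there is no argument in the paper you could borrow to finish it. To get a correct statement you need one of two things:
\begin{enumerate}
\item[(a)] an extra hypothesis that makes the variation of the normalization negligible, for example $\nabla\omega\equiv0$ as on the circle and flat torus of Section~\ref{sec:num_heat}, where $q_\epsilon$ and $\hat q_\epsilon$ are constant and only the empirical fluctuation of $\nabla q_{\epsilon,N}$ and $\nabla\hat q_{\epsilon,N}$ remains to be bounded; or
\item[(b)] a weaker conclusion that includes an additional $O(h\,\epsilon^{1-d/2})$ term plus that empirical-gradient fluctuation term.
\end{enumerate}
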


See Appendix~\ref{proof:lemma3.2} for the proof. With this result, we conclude this section as follows:

\comment{\color{red}
If we apply Lemma~\ref{lem:localerror_k} with $x,x'\in\mathcal M$ such that
$d_g(x,x')\le h$, once with the second
argument fixed at $x$ and once with the second argument fixed at $x'$, we obtain
\[
|k_{\epsilon,N}(x,x)-k_{\epsilon,N}(x',x)|
\le C\epsilon^{-(3d/4+1)}h^2,\qquad |k_{\epsilon,N}(x',x')-k_{\epsilon,N}(x,x')|
\le C\epsilon^{-(3d/4+1)}h^2.
\]
Since $k_{\epsilon,N}$ is symmetric, $k_{\epsilon,N}(x',x)=k_{\epsilon,N}(x,x')$,
and therefore
\[
\left|k_{\epsilon,N}(x,x)+k_{\epsilon,N}(x',x')
-2k_{\epsilon,N}(x,x')\right|
\le C\epsilon^{-(3d/4+1)}h^2.
\]
This justifies a diagonal estimate used for
$\|k_{\epsilon,N}(\cdot,x)-k_{\epsilon,N}(\cdot,x')\|_{\mathcal H_{\epsilon,N}}$ that we will use later.
}

\begin{prop}\label{prop:err_eigfun}
Let $\psi^{N,\epsilon}_j$ be defined as in \eqref{RKHS} and $\nu_j<0$ and $\varphi_j$ denote the $j$th eigenvalue and eigenfunction of the Laplace-Beltrami operator on $\mathcal{M}$. Under the Assumption~\ref{assumption1} and the assumptions in Lemma~\ref{lem:localerror_k},
for all $\bx\in \mathcal{M}$ and $t>0$, with probability higher than $1- \frac{11}{N}-\frac{2}{N^2}$,
\[
\left|e^{\frac{t}{2}\nu_j^{\epsilon,N}}\psi^{\epsilon,N}_j(\bx) - e^{\frac{t}{2}\nu_j}\varphi_j(\bx) \right| \leq  e^{\frac{t}{2}\nu_1 }\left( C_1 \frac{1}{\epsilon^{1/2+ d/4}}\left(\frac{\log N}{N} \right)^{\frac{1}{d}} + C_2 \epsilon^{\frac{1}{4}}+ C_3\frac{1}{\epsilon^{d+3}}\left(\frac{\log N}{N}\right)^{\frac{1}{2}} + C_4 \left(\frac{\log N}{N} \right)^{\frac{1}{d}}\right),
\] 
and $C_1,C_2, C_3, C_4>0$ are independent of $\epsilon$ and $N$. Here, $\left\{e^{\frac{\epsilon}{2}\nu_j^{\epsilon,N}}\psi^{\epsilon,N}_j\right\}$ forms an orthonormal basis of $\mathcal{H}_{\epsilon,N}$.\end{prop}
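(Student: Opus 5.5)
We need to bound $|e^{t\nu_j^{\epsilon,N}/2}\psi_j^{\epsilon,N}(\bx)-e^{t\nu_j/2}\varphi_j(\bx)|$ uniformly in $\bx$. We split it with the triangle inequality as
\[
\left|e^{\frac{t}{2}\nu_j^{\epsilon,N}}-e^{\frac{t}{2}\nu_j}\right|\,|\psi_j^{\epsilon,N}(\bx)| + e^{\frac{t}{2}\nu_j}\left|\psi_j^{\epsilon,N}(\bx)-\varphi_j(\bx)\right|,
\]
absorbing the sign $\alpha_j$ of Lemma~\ref{lem:spectral} into $\varphi_j$. For $j\ge 1$ we use $\nu_j\le\nu_1$, so the prefactor $e^{t\nu_j/2}\le e^{t\nu_1/2}$. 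The case $j=0$ is trivial: there $\lambda_0^{\epsilon,N}=1$ and both functions are essentially constant. For the first term we use the mean value theorem, which gives $|e^{ta/2}-e^{tb/2}|\le \frac{t}{2}|a-b|e^{t\max(a,b)/2}$. We then apply the eigenvalue rate \eqref{eig_rate}, whose errors $\epsilon^{3/4}$ and $\epsilon^{-d-5/2}(\log N/N)^{1/2}$ are dominated by the claimed $\epsilon^{1/4}$ and $\epsilon^{-d-3}(\log N/N)^{1/2}$ terms. The factor $t$ is absorbed using $te^{t\nu/2}\le Ce^{t\nu_1/4}$. We also need a uniform bound on $\psi_j^{\epsilon,N}$, which follows from the main estimate below.

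The core step is to estimate $\psi_j^{\epsilon,N}(\bx)-\varphi_j(\bx)$ at an arbitrary $\bx\in\mathcal M$. By Lemma~\ref{samplingassumption}, there is a data point $\bx_i$ with $d_g(\bx,\bx_i)\le h\lesssim(\log N/N)^{1/d}$. We decompose
\[
\psi_j^{\epsilon,N}(\bx)-\varphi_j(\bx)=\big[\psi_j^{\epsilon,N}(\bx)-\psi_j^{\epsilon,N}(\bx_i)\big]+\big[\varphi^{\epsilon,N}_j(\bx_i)-\varphi_j(\bx_i)\big]+\big[\varphi_j(\bx_i)-\varphi_j(\bx)\big].
\]
The middle bracket uses the fact that the Nystr\"om extension interpolates: $\psi_j^{\epsilon,N}(\bx_i)=\varphi_j^{\epsilon,N}(\bx_i)$ because $T_{\epsilon,N}\varphi_j^{\epsilon,N}=\lambda_j^{\epsilon,N}\varphi_j^{\epsilon,N}$. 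This bracket is then bounded by \eqref{eiv_rate2}, giving the $C_2\epsilon^{1/4}+C_3\epsilon^{-d-3}(\log N/N)^{1/2}$ terms. The third bracket is bounded by the Lipschitz constant of the smooth eigenfunction $\varphi_j$ (for example via a Sobolev/H\"ormander sup-norm bound on $\nabla\varphi_j$) times $h$, which produces $C_4(\log N/N)^{1/d}$.

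The first bracket is the main obstacle, since it requires regularity of the Nystr\"om extension at the scale $h$. We write
\[
\psi_j^{\epsilon,N}(\bx)-\psi_j^{\epsilon,N}(\bx_i)=(\lambda_j^{\epsilon,N})^{-1}\frac1N\sum_k\big(k_{\epsilon,N}(\bx,\bx_k)-k_{\epsilon,N}(\bx_i,\bx_k)\big)\varphi_j^{\epsilon,N}(\bx_k).
\]
Lemma~\ref{lem:localerror_k} alone is too crude, since it is stated for points in a ball around the second argument. Instead we use the reproducing property and Cauchy--Schwarz in $\mathcal H_{\epsilon,N}$:
\[
|\psi_j^{\epsilon,N}(\bx)-\psi_j^{\epsilon,N}(\bx_i)|\le \|\psi_j^{\epsilon,N}\|_{\mathcal H_{\epsilon,N}}\big(k_{\epsilon,N}(\bx,\bx)+k_{\epsilon,N}(\bx_i,\bx_i)-2k_{\epsilon,N}(\bx,\bx_i)\big)^{1/2}.
\]
Here $\|\psi_j^{\epsilon,N}\|_{\mathcal H_{\epsilon,N}}=(\lambda_j^{\epsilon,N})^{-1/2}$, which is $O(1)$ under the bandwidth assumption, since $\lambda_j^{\epsilon,N}\approx e^{\epsilon\nu_j}$ and $\epsilon\le G(N,\Gamma_N)$. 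The diagonal combination is controlled by applying Lemma~\ref{lem:localerror_k} twice: once with second argument $\bx$ and once with second argument $\bx_i$, using symmetry of the kernel. This gives $O(h^2\epsilon^{-1-d/2})$, and taking the square root yields $C_1\epsilon^{-1/2-d/4}(\log N/N)^{1/d}$.

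Finally, we collect the failure probabilities by a union bound: Lemma~\ref{samplingassumption} ($1/N$), Lemma~\ref{lem:localerror_k} ($5/N$), Lemma~\ref{lem:spectral} ($N^{-2}$, used for both the eigenvalue and eigenvector rates), and Proposition~\ref{prop:Gaussapprox}/Lemma~\ref{lem:3.1} for $\hat q_{\epsilon,N}\approx$ const in \eqref{eiv_rate2}. Together these give the stated $1-\frac{11}{N}-\frac{2}{N^2}$.

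For the closing remark, orthonormality of $\{e^{t\nu_j^{\epsilon,N}/2}\psi_j^{\epsilon,N}\}$ at $t=\epsilon$ is immediate from \eqref{RKHS}, since $e^{\epsilon\nu_j^{\epsilon,N}/2}=\sqrt{\lambda_j^{\epsilon,N}}$.
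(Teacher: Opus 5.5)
Your proposal is correct and follows essentially the same route as the paper: a nearby sample point $\bx_i$, the interpolation identity $\psi_j^{\epsilon,N}(\bx_i)=\varphi_j^{\epsilon,N}(\bx_i)$ combined with the eigenvector rate, the Lipschitz bound on $\varphi_j$, and the RKHS Cauchy--Schwarz estimate in which $k_{\epsilon,N}(\bx,\bx)+k_{\epsilon,N}(\bx_i,\bx_i)-2k_{\epsilon,N}(\bx,\bx_i)$ is controlled by applying Lemma~\ref{lem:localerror_k} twice. The only difference is how you handle the eigenvalue factor. The paper places the eigenvalue difference at $\bx_i$ against $|\varphi_j(\bx_i)|$ and carries a factor $(1+\hat C\epsilon^{3/4}t)$, whereas your absorption $te^{t\nu/2}\le Ce^{t\nu_1/4}$ weakens the decay exponent to $\nu_1/4$. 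Letting the constants depend on $t$, or using $t\epsilon^{3/4}\le\epsilon^{1/4}$ for $t\lesssim\epsilon^{-1/2}$, keeps the stated $e^{\frac{t}{2}\nu_1}$ prefactor.
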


\begin{proof}
We first note that  for every $\bx_i \in B_h(\bx) \subset \mathcal{M}$ 
where $h$ is the fill distance as defined in Lemma~\ref{samplingassumption},
\BEA
\left|e^{\frac{t}{2}\nu_j^{\epsilon,N}}\psi^{\epsilon,N}_j(\bx) - e^{\frac{t}{2}\nu_j}\varphi_j(\bx) \right| &\leq& \underbrace{\left|e^{\frac{t}{2}\nu_j^{\epsilon,N}}\left(\psi^{\epsilon,N}_j(\bx) - \psi^{\epsilon,N}_j(\bx_i) \right)\right|}_{\text{(I)}} + \underbrace{\left|e^{\frac{t}{2}\nu_j^{\epsilon,N}}\psi^{\epsilon,N}_j(\bx_i) -e^{\frac{t}{2}\nu_j} \varphi_j(\bx_i) \right|}_{\text{(II)}}\notag \\ && + \underbrace{\left|e^{\frac{t}{2}\nu_j}\left(\varphi_j(\bx_i) - \varphi_j(\bx)\right) \right|  }_{\text{(III)}}
\EEA 
First, we can bound term (II) immediately based on the result \eqref{eiv_rate2} (using \eqref{eiv_rate} in Lemma~\ref{lem:spectral},  \eqref{eq:error_qhat_en}, and uniformly sampling assumption). That is, writing
\BEA
\textup{(II)} \leq e^{\frac{t}{2}\nu_j^{\epsilon,N}}\left| \psi^{\epsilon,N}_j(\bx_i) - \varphi_j(\bx_i) \right| + |e^{\frac{t}{2}\nu_j^{\epsilon,N}} - e^{\frac{t}{2}\nu_j}| \left| \varphi_j(\bx_i) \right|.
\EEA
With probability higher than $1-N^{-2}$,
\begin{equation}
    \left|\psi^{\epsilon,N}_j(\bx_i) - \varphi_j(\bx_i) \right| = \left|\varphi^{\epsilon,N}_j(\bx_i) - \varphi_j(\bx_i) \right| \leq C_2 \epsilon^{1/4} + C_3\frac{1}{\epsilon^{d+3}}\left(\frac{\log N}{N}\right)^{\frac{1}{2}} ,\label{convrate_II}
\end{equation}
for some constants $C_2,C_3>0$ that are independent of $N$ and $\epsilon$ as $N\to \infty$, which also satisfied the inequality in \eqref{eps_N}. As for the spectrum, with probability higher than $1-\frac{1}{N^2}$, for $j>1$,
\BEA
\left|e^{\frac{t}{2}\nu_j^{\epsilon,N}} - e^{\nu_j\frac{t}{2}}\right| \leq \hat{C}\epsilon^{3/4} t e^{\frac{t}{2}\nu_1}.\label{eq:error_eigoperator}
\EEA
Since, $e^{\frac{t}{2}\nu_j^{\epsilon,N}} \leq \left|e^{\frac{t}{2}\nu_j^{\epsilon,N}} - e^{\frac{t}{2}\nu_j}\right| +  e^{\frac{t}{2}\nu_j} $, it is clear that,  $e^{\frac{t}{2}\nu_j^{\epsilon,N}}  \leq  e^{\frac{t}{2}\nu_1}\left(1 + \hat{C}\epsilon^{3/4} t \right)$. Since the eigenvector error rate is slower than that of the eigenvalues, we have
\[
\textup{(II)} \leq \left( C_2 \epsilon^{1/4} + C_3\frac{1}{\epsilon^{d+3}}\left(\frac{\log N}{N}\right)^{\frac{1}{2}} \right) e^{\frac{t}{2}\nu_1 }\left(1 + \hat{C}\epsilon^{3/4} t  \right)  .
\]


Term (III) is also immediate from the smoothness of eigenfunctions of the Laplace-Beltrami operator, 
\[
e^{\frac{t}{2}\nu_j} \left| \varphi_j(\bx_i) -\varphi_j(\bx) \right| \leq  C_4 e^{\frac{t}{2}\nu_1} h,
\]
where the constant $C_4=\|\nabla\varphi_j\|_{C(B_h(\bx))}$. Here, $h \to 0$ as $N\to \infty$ with a rate in \eqref{filldistance} which is faster than \eqref{convrate_II}.

\comment{\color{red}Next, let us bound the term (I). From the definition in \eqref{Nyst_eigfun}, one can write,
\BEA
\left|\psi^{\epsilon,N}_j(\bx) - \psi_j^{\epsilon,N}(\bx_i) \right| &\leq& \frac{1}{N\lambda^{\epsilon,N}_j} \sum_{\ell=1}^N  \left| \left(k_{\epsilon,N}(\bx,\bx_\ell) -k_{\epsilon,N}(\bx_i,\bx_\ell)\right) \varphi_j^{\epsilon,N}(\bx_\ell)\right| \notag \\
&\leq& \frac{1}{\lambda^{\epsilon,N}_j} \left(\frac{1}{N}\sum_{\ell=1}^N  \left| k_{\epsilon,N}(\bx,\bx_\ell) -k_{\epsilon,N}(\bx_i,\bx_\ell)\right|^2 \right)^{1/2} \underbrace{\left(\frac{1}{N}\sum_{\ell=1}^N  \left| \varphi^{\epsilon,N}_j(\bx_\ell)\right|^2 \right)^{1/2}.}_{=1} \notag
\EEA
}

Now we bound the term (I). Let us denote $\Psi^{\epsilon,N}_j=\sqrt{\lambda_j^{\epsilon,N}}\psi^{\epsilon,N}_j$, which is a feature of the RKHS $\mathcal{H}_{\epsilon,N}$ that satisfies, \BEA\left\langle\Psi^{\epsilon,N}_j,\Psi^{\epsilon,N}_i\right\rangle_{\mathcal{H}_{\epsilon,N}} &=& \left(\lambda_j^{\epsilon,N}\lambda_i^{\epsilon,N}\right)^{1/2}\left\langle\psi^{\epsilon,N}_j,\psi^{\epsilon,N}_i\right\rangle_{\mathcal{H}_{\epsilon,N}} = \frac{\left\langle S_{\epsilon,N}\varphi^{\epsilon,N}_j,S_{\epsilon,N}\varphi^{\epsilon,N}_i\right\rangle_{\mathcal{H}_{\epsilon,N}}}{\left(\lambda_j^{\epsilon,N}\lambda_i^{\epsilon,N}\right)^{1/2}} \notag \\
&=& \frac{\left\langle S_{\epsilon,N}^*S_{\epsilon,N}\varphi^{\epsilon,N}_j,\varphi^{\epsilon,N}_i\right\rangle_{L^2(\mu_N)}}{\left(\lambda_j^{\epsilon,N}\lambda_i^{\epsilon,N}\right)^{1/2}}  = \frac{\left(\lambda_j^{\epsilon,N}\right)^{1/2}}{\left(\lambda_i^{\epsilon,N}\right)^{1/2}} \left\langle \varphi^{\epsilon,N}_j,\varphi^{\epsilon,N}_i\right\rangle_{L^2(\mu_N)}=  \delta_{ij}.\notag
\EEA 

We note that, 
\[
e^{\frac{t}{2}\nu_j^{\epsilon,N}} = e^{\frac{(t-\epsilon)}{2}\nu_j^{\epsilon,N}}e^{\frac{\epsilon}{2}\nu_j^{\epsilon,N}} = e^{\frac{(t-\epsilon)}{2}\nu_j^{\epsilon,N}}\sqrt{\lambda^{\epsilon,N}_j}, 
\]
so, 
\[
e^{\frac{t}{2}\nu_j^{\epsilon,N}}\left(\psi_j^{\epsilon,N}(\bx) - \psi_j^{\epsilon,N}(\bx_i)\right) = e^{\frac{(t-\epsilon)}{2}\nu_j^{\epsilon,N}}\left(\Psi_j^{\epsilon,N}(\bx) - \Psi_j^{\epsilon,N}(\bx_i)\right).
\]
By the reproducing property and orthonormality of $\Psi_j^{\epsilon,N}$,
\BEA
\left|\Psi_j^{\epsilon,N}(\bx) -\Psi_j^{\epsilon,N}(\bx_i)\right|^2 &=& \left|\left\langle \Psi_j^{\epsilon,N},k_{\epsilon,N}(\cdot,\bx)-k_{\epsilon,N}(\cdot,\bx_i) \right\rangle_{\mathcal{H}_{\epsilon,N}} \right|^2\notag \\
&\leq& \left\| \Psi_j^{\epsilon,N}\right\|_{\mathcal{H}_{\epsilon,N}}^2\left\| k_{\epsilon,N}(\cdot,\bx)-k_{\epsilon,N}(\cdot,\bx_i)\right\|_{\mathcal{H}_{\epsilon,N}}^2\notag \\
&=& \left| k_{\epsilon,N}(\bx,\bx) + k_{\epsilon,N}(\bx_i,\bx_i)- 2k_{\epsilon,N}(\bx,\bx_i) \right|
\notag \\
&\leq&  \left| k_{\epsilon,N}(\bx,\bx)- k_{\epsilon,N}(\bx,\bx_i)\right| +\left| k_{\epsilon,N}(\bx_i,\bx_i)- k_{\epsilon,N}(\bx,\bx_i) \right|.
\notag 
\EEA
We conclude that,
\BEA
\text{(I)} \leq e^{\frac{(t-\epsilon)}{2}\nu_j^{\epsilon,N}} \left|\Psi_j^{\epsilon,N}(\bx) -\Psi_j^{\epsilon,N}(\bx_i)\right| \leq e^{\frac{t}{2}\nu_1 }\left(1 + \hat{C}\epsilon^{3/4} t \right) \underbrace{e^{-\frac{\epsilon}{2}\nu_{j}^{\epsilon,N}}}_{1-\frac{1}{2}\nu_j\epsilon + O(\epsilon^{7/4})} \frac{C_1}{\epsilon^{1/2+ d/4}}\left(\frac{\log N}{N} \right)^{\frac{1}{d}},
\EEA
using Lemma~\ref{lem:localerror_k}.  Counting all of the probability,  wph $1-\frac{11}{N}-\frac{2}{N^2}$,
\[
\text{(I)+(II)+(III)} \leq e^{\frac{t}{2}\nu_1}  \left(C_1 \frac{1}{\epsilon^{1/2+ d/4}}\left(\frac{\log N}{N} \right)^{\frac{1}{d}} +C_2 \epsilon^{1/4} + C_3\frac{1}{\epsilon^{d+3}}\left(\frac{\log N}{N}\right)^{\frac{1}{2}} + C_4 \left(\frac{\log N}{N} \right)^{\frac{1}{d}} \right),
\]
for $j\geq 1$. For $j=0$, the same rate also apply without dependent on $t$ because $\nu_0 = \nu_0^{\epsilon,N} = 0$. 
\end{proof}

\begin{remark}\label{rem:error_analysis}
  For any fixed $t>0$, balancing $C_1$ and $C_2$ yields $\epsilon^{1/4} = O\left(\frac{\log N}{N}\right)^{\frac{1}{d(d+3)}}$. 
Balancing $C_2$ and $C_3$ yields the convergence rate of the eigenvector, that is, $\left(\frac{\log N}{N}\right)^{\frac{1}{8d+26}}$.
Since 
\[
\left(\frac{\log N}{N}\right)^{\frac{1}{d}} \ll \left(\frac{\log N}{N}\right)^{\frac{1}{d(d+3)}} \ll \left(\frac{\log N}{N}\right)^{\frac{1}{8d+26}},
\]
for $d<9$, the overall error rate is $\epsilon^{1/4}=O\left(\frac{\log N}{N}\right)^{\frac{1}{8d+26}}$ for $d<9$, which is the error bound of the eigenvector estimation in Lemma~\ref{lem:spectral}. On the other hand, if $d\geq 9$, the overall error rate is even slower,
$\epsilon^{1/4}=O\left(\frac{\log N}{N}\right)^{\frac{1}{d(d+3)}}$.  
\end{remark}

\subsection{Proof of Theorem~\ref{thm1}}\label{sec33}

In this section, we prove Theorem~\ref{thm1}. Namely, we deduce that the limit of the following sequence,
\[
\sup_{\bx,\by\in\mathcal M}
\left|H_{\epsilon,N}(\bx,\by;t)-H(\bx,\by;t)\right|
\]
vanishes, along an admissible sequence $N\to\infty$, $\epsilon\to 0$, where $H$ and $H_{\epsilon,N}$ are defined in \eqref{eq:heatkernel} and \eqref{eq:empiricalheatkernel}, respectively. 

Decompose
\[
\begin{aligned}
H_{\epsilon,N}(\bx,\by;t)-H(\bx,\by;t)
&=
\underbrace{\sum_{j=0}^{N-1}\left[
e^{t\nu_j^{\epsilon,N}}
\psi_j^{\epsilon,N}(\bx)\psi_j^{\epsilon,N}(\by)
-e^{t\nu_j}\varphi_j(\bx)\varphi_j(\by)
\right]}_{=:A_{N,\epsilon,t}(\bx,\by)} 
-\underbrace{\sum_{j=N}^{\infty}e^{t\nu_j}\varphi_j(\bx)\varphi_j(\by)}_{=:R_{N,t}(\bx,\by)}.
\end{aligned}
\]

We will employ Proposition~\ref{prop:err_eigfun} to bound $|A_{N,\epsilon,t}(\bx,\by)|$ and follow the proof of Theorem~3 in \cite{dunson2021spectral} to bound the remainder term, $|R_{N,t}(\bx,\by)|$.

For $j=1,\ldots,N-1$, using Proposition~\ref{prop:err_eigfun}, each term in $A_{N,\epsilon,t}$ can be bounded as follows,
\BEA
\left|e^{t\nu_j^{\epsilon,N}}
\psi_j^{\epsilon,N}(\bx)\psi_j^{\epsilon,N}(\by)
-e^{t\nu_j}\varphi_j(\bx)\varphi_j(\by)\right|\leq C_2e^{t\frac{\nu_1}{2}}\epsilon^{1/4}\left(
\sup_{x\in \mathcal{M}} \left|e^{\frac{t}{2}\nu_j^{\epsilon,N}}
\psi_j^{\epsilon,N}(\bx) \right|
+\sup_{x\in \mathcal{M}} \left|e^{\frac{t}{2}\nu_j}\varphi_j(\bx)\right|\right).\label{eq:eq11}
\EEA
Using also \eqref{eq:error_eigoperator}, the right hand side can be bounded as follows:
\BEA
e^{\frac{t}{2}\nu_j^{\epsilon,N}}\sup_{x\in \mathcal{M}} \left|
\psi_j^{\epsilon,N}(\bx) \right|
+e^{\frac{t}{2}\nu_j}\sup_{x\in \mathcal{M}} \left|\varphi_j(\bx)\right| &\leq&
\left(e^{\frac{t}{2}\nu_j^{\epsilon,N}} - e^{\frac{t}{2}\nu_j} \right)\|\psi_j^{\epsilon,N}\|_{\infty} +e^{\frac{t}{2}\nu_j} \left(\|\psi_j^{\epsilon,N}\|_{\infty} + \|\varphi_j\|_{\infty}\right) \notag \\
&\leq& \left(e^{\frac{t}{2}\nu_j^{\epsilon,N}} - e^{\frac{t}{2}\nu_j} \right) \left(\|\varphi_j\|_{\infty} + C_2\epsilon^{1/4}\right) +e^{\frac{t}{2}\nu_1} \left(2\|\varphi_j\|_{\infty}+C_2\epsilon^{1/4}\right) \notag \\
&\leq& (\hat{C}\epsilon^{3/4} t +1) e^{\frac{t}{2}\nu_1} \left(2\|\varphi_j\|_{\infty}+C_2\epsilon^{1/4}\right) \notag \\
&\leq & \tilde{C} (-\nu_j)^{\frac{d-1}{4}} (\hat{C}\epsilon^{3/4} t +1) e^{\frac{t}{2}\nu_1},\notag
\EEA
where we used a uniform bound for eigenfunction of the Laplace-Beltrami operator (see Lemma~1 in \cite{dunson2021spectral}).
Inserting this bound to \eqref{eq:eq11}, 
\[\left|e^{t\nu_j^{\epsilon,N}}
\psi_j^{\epsilon,N}(\bx)\psi_j^{\epsilon,N}(\by)
-e^{t\nu_j}\varphi_j(\bx)\varphi_j(\by)\right| \leq \tilde{C} (-\nu_j)^{\frac{d-1}{4}}e^{t\nu_1}\epsilon^{1/4} (\hat{C}t\epsilon^{3/4} + 1).
\]
For $j=0$, the bound
\[
\left|
\psi_0^{\epsilon,N}(\bx)\psi_0^{\epsilon,N}(\by)
-\varphi_0(\bx)\varphi_0(\by)\right| \leq C_2 \epsilon^{1/4}.
\]
Therefore,
\BEA
|A_{N,\epsilon,t}(\bx,\by)| &\leq& \tilde{C} (-\nu_N)^{\frac{d-1}{4}} \epsilon^{1/4} N e^{t\nu_1} (\hat{C}t\epsilon^{3/4} + 1) + C_2\epsilon^{1/4} \notag \\
&\leq& C_1 N (-\nu_N)^{\frac{d-1}{4}}   \epsilon^{1/4} (t + 1)e^{t\nu_1} + C_2\epsilon^{1/4} \notag \\
&\leq & \frac{C_1}{N}(t + 1)e^{t\nu_1} + C_2 \epsilon^{1/4},\notag
\EEA
where we have used the assumption 
$
\epsilon^{1/2} \leq \frac{1}{N^4 (-\nu_N)^{\frac{d-1}{2}}} 
$
and define $C_1 = \tilde{C}\hat{C}>0$.

Following the argument in Section 6 of \cite{dunson2021spectral}, we can bound the reminder.
\[
|R_{N,t}(\bx,\by)| \leq C_3 \frac{N}{2}A(t) e^{-A(t)}, 
\]
where $C_3$ depends on dimension $d$, Ricci curvature, and $\text{diam}(\mathcal{M})$, and $A(t) = c N^{2/d}t$. Take $t \geq \frac{8\log N}{cN^{2/d}}$, then,
\[
\frac{A(t)}{2} - \log \frac{A(t)}{2} \geq \frac{A(t)}{4} \geq 2\log N, 
\]
which is analogous to $\frac{N}{2}A(t) \leq \frac{1}{N}e^{\frac{A(t)}{2}}$. Thus,
\[
|R_{N,t}(\bx,\by)| \leq C_3\frac{1}{N}e^{-\frac{A(t)}{2}} = C_3 \frac{1}{N}e^{-\frac{cN^{2/d}t}{2}}. 
\]

\comment{
\subsection{Proof of Theorem~\ref{thm1}}\label{sec33}

{\color{blue} The proof of this theorem seems to be separate into two parts. One part is exactly Proposition 3.2. As for the reminder term, the proof should be no different than p.322 of Section 6 in \cite{dunson2021spectral}, where $t$ should be larger than some $J$. The smaller the $t$, the larger $J$ is needed for the convergence to be valid.}

{\color{red}
AT TODO: Proof of Theorem~\ref{thm1}. Let's hold off setting $J = N-1$ because I think that requires a much stronger assumption on how the eigenmodes are converging. 
}

\comment{
Let $H$ be the heat kernel of the Laplace--Beltrami operator given by
\[
H(\bx,\by;t)=\sum_{j=0}^{\infty}e^{\nu_jt}\varphi_j(\bx)\varphi_j(\by),
\]
where the eigenvalues are ordered as
$\nu_0=0\ge \nu_1\ge \nu_2\ge\cdots$ and $\nu_j\to -\infty$.
For every fixed $s>0$, define the positive-time empirical diffusion kernel as
\[
H_{\epsilon,N}(\bx,\by;s)
=\sum_{j=0}^{N-1}(\lambda_j^{\epsilon,N})^{s/\epsilon}
\psi_j^{\epsilon,N}(\bx)\psi_j^{\epsilon,N}(\by).
\]
Here $\lambda_j^{\epsilon,N}$ denotes the eigenvalue of the self-adjoint
heat-step diffusion maps operator, ordered as
\[
1=\lambda_0^{\epsilon,N}\ge \lambda_1^{\epsilon,N}\ge\cdots\ge
\lambda_{N-1}^{\epsilon,N}\ge 0.
\]

For the fixed low modes, set
\[
\nu_j^{\epsilon,N}:=\frac{1}{\epsilon}\log \lambda_j^{\epsilon,N}.
\]
This quantity is the empirical generator eigenvalue associated with the
heat-step eigenvalue $\lambda_j^{\epsilon,N}$. Since the diffusion maps operator
approximates one heat step $e^{\epsilon\Delta_g}$, the desired fixed-mode
coefficient convergence is
\[
\nu_j^{\epsilon,N}\to \nu_j,
\qquad\text{equivalently}\qquad
(\lambda_j^{\epsilon,N})^{t/\epsilon}\to e^{\nu_jt}
\]
for each fixed $j$. If the available spectral estimate is stated directly for
$\lambda_j^{\epsilon,N}$ rather than for $\nu_j^{\epsilon,N}$, then one needs the
corresponding estimate to be accurate enough after taking the logarithm. For
example, if
$\lambda_j^{\epsilon,N}=e^{\epsilon\nu_j}+r_j^{\epsilon,N}$, then
$\nu_j^{\epsilon,N}\to\nu_j$ follows provided $r_j^{\epsilon,N}=o(\epsilon)$ for
fixed $j$.}

In this section, we prove Theorem~\ref{thm1}. Namely, we deduce that the limit of the following sequence,
\[
\sup_{\bx,\by\in\mathcal M}
\left|H_{\epsilon,N}(\bx,\by;t)-H(\bx,\by;t)\right|
\]
vanishes, along an admissible sequence $N\to\infty$, $\epsilon\to 0$, where $H$ and $H_{\epsilon,N}$ are defined in \eqref{eq:heatkernel} and \eqref{eq:empiricalheatkernel}, respectively. \comment{The sequence should be
chosen so that, for the positive times used below,
\[
\sup_{x\in\mathcal M}H_{\epsilon,N}(x,x;s)\le C_s
\]
with a constant independent of $N$ and $\epsilon$.}

Fix $J$ and assume $N>J+1$. Decompose
\[
\begin{aligned}
H_{\epsilon,N}(\bx,\by;t)-H(\bx,\by;t)
&=
\underbrace{\sum_{j=0}^{J}\left[
(\lambda_j^{\epsilon,N})^{t/\epsilon}
\psi_j^{\epsilon,N}(\bx)\psi_j^{\epsilon,N}(\by)
-e^{\nu_jt}\varphi_j(\bx)\varphi_j(\by)
\right]}_{=:A_{J,N,\epsilon}(\bx,\by)} \\
&\qquad
+\underbrace{\sum_{j=J+1}^{N-1}(\lambda_j^{\epsilon,N})^{t/\epsilon}
\psi_j^{\epsilon,N}(\bx)\psi_j^{\epsilon,N}(\by)}_{=:R_{J,N,\epsilon}(\bx,\by)}
-\underbrace{\sum_{j=J+1}^{\infty}e^{\nu_jt}\varphi_j(\bx)\varphi_j(\by)}_{=:R_J(\bx,\by)}.
\end{aligned}
\]
Thus $A_{J,N,\epsilon}$ contains only the modes $0\le j\le J$, while
$R_{J,N,\epsilon}$ and $R_J$ denote the empirical and continuum trailing terms.
The empirical tail is a finite sum, ending at $N-1$.

Since only finitely many modes are involved, the continuum eigenfunctions
$\varphi_0,\ldots,\varphi_J$ are bounded on the compact manifold $\mathcal M$.
Moreover, for the low modes under consideration,
\[
(\lambda_j^{\epsilon,N})^{t/\epsilon}\psi_j^{\epsilon,N}(\bx)\psi_j^{\epsilon,N}(\by)
= e^{(t-\epsilon)\nu_j^{\epsilon,N}}
\left(\sqrt{\lambda_j^{\epsilon,N}}\psi_j^{\epsilon,N}(\bx)\right)
\left(\sqrt{\lambda_j^{\epsilon,N}}\psi_j^{\epsilon,N}(\by)\right).
\]
Similarly,
\[
e^{\nu_jt}\varphi_j(\bx)\varphi_j(\by)
= e^{(t-\epsilon)\nu_j}
\left(e^{\nu_j\epsilon/2}\varphi_j(\bx)\right)
\left(e^{\nu_j\epsilon/2}\varphi_j(\by)\right).
\]
Therefore, for fixed $J$ and all sufficiently small $\epsilon<t$, we have
\[
\sup_{x,y\in\mathcal M}|A_{J,N,\epsilon}(\bx,\by)|
\leq C_{J,t}\left(
\max_{0\le j\le J}|\nu_j^{\epsilon,N}-\nu_j|
+
\max_{0\le j\le J}\sup_{x\in\mathcal M}
\left|
\sqrt{\lambda_j^{\epsilon,N}}\psi_j^{\epsilon,N}(\bx)
-e^{\nu_j\epsilon/2}\varphi_j(
\bx)
\right|
\right).
\]
By Lemma~\ref{lem:spectral} and Proposition~\ref{prop:err_eigfun}, $A_{J,N,\epsilon}\to0$ uniformly in $(\bx,\by)$ for each fixed $J$,
provided the fixed-mode eigenvalue and eigenfunction convergence holds. If an
eigenvalue has multiplicity, this statement should be understood after aligning
bases inside the corresponding eigenspace, or equivalently in terms of spectral
projectors.

For the continuum tail, the heat kernel is smooth for every positive time. Using
Cauchy's inequality and the monotonicity of the eigenvalues,
\begin{align*}
|R_J(\bx,\by)|
&\le
\left(\sum_{j>J}e^{\nu_jt}\varphi_j(\bx)^2\right)^{1/2}
\left(\sum_{j>J}e^{\nu_jt}\varphi_j(\by)^2\right)^{1/2} \\
&\le e^{\nu_{J+1}t/2}
\left(H(\bx,\bx;t/2)H(\by,\by;t/2)\right)^{1/2}
\le C_t e^{\nu_{J+1}t/2}.
\end{align*}
Here $\nu_j\to-\infty$, so this tail vanishes as $J\to\infty$.

{\color{blue} I think this rate is faster than the decay rate of the leading term $A$. In fact, for Laplace-Beltrami operator $\nu_j\propto -j^{\frac{2}{d}}$ (see \cite{colbois2015eigenvalues}).
Also, 
\[
H(\bx,\bx,t) = \frac{1}{(4\pi t)^{d/2}} \left(1+ \frac{t}{6}s(\bx)+O(t^2)\right),
\]
where $s$ is the scalar curvature. See Proposition 3.23 together with Lemma 3.26 and Proposition 3.29 in \cite{rosenberg1997laplacian}.
}

For the empirical tail, the same argument gives
\begin{align*}
|R_{J,N,\epsilon}(\bx,\by)|
&\le
\left(\sum_{j=J+1}^{N-1}(\lambda_j^{\epsilon,N})^{t/\epsilon}
\psi_j^{\epsilon,N}(\bx)^2\right)^{1/2}
\left(\sum_{j=J+1}^{N-1}(\lambda_j^{\epsilon,N})^{t/\epsilon}
\psi_j^{\epsilon,N}(\by)^2\right)^{1/2} \\
&\le
(\lambda_{J+1}^{\epsilon,N})^{t/(2\epsilon)}
\left(H_{\epsilon,N}(\bx,\bx;t/2)H_{\epsilon,N}(\by,\by;t/2)\right)^{1/2} \\
&\le C_{t/2}\exp\!\left(\frac{t}{2}\nu_{J+1}^{\epsilon,N}\right).
\end{align*}
This step uses $0\le \lambda_j^{\epsilon,N}\le1$ and the ordering of the
empirical eigenvalues. Hence, by the fact that each mode is converging,
\[
\limsup_{N\to\infty,\,\epsilon\to0}
\sup_{\bx,\by\in\mathcal M}|R_{J,N,\epsilon}(\bx,\by)|
\le C_{t/2}e^{\nu_{J+1}t/2}.
\]

{\color{blue}
Based on your definition, 
\[
H_{\epsilon,N}(\bx,\by,\epsilon) = k_{\epsilon,N}(\bx,\by),\] 
we have,
\[
H_{\epsilon,N}(\bx,\by,t) = \sum_{j=0}^{N-1}  e^{\nu^{\epsilon,N}_j(t-\epsilon)} \lambda_{j}^{\epsilon,N} \psi_j^{\epsilon,N}(\bx)\psi_j^{\epsilon,N}(\by),
\]
for $t>\epsilon>0$. By Proposition~\ref{prop:Gaussapprox} and the fact that Gaussian function is bounded, and $\nu_0^{\epsilon,N}=0$,
\BEA
|H_{\epsilon,N}(\bx,\bx,t)| &\leq &  e^{\nu_0^{\epsilon,N}(t-\epsilon)}|k_{\epsilon,N}(\bx,\bx)| \leq |k_{\epsilon,N}(\bx,\bx)| \notag \\ &\leq &  \left|k_{\epsilon,N}(\bx,\bx) - \frac{\text{Vol}(\mathcal{M})}{m_0}\tilde{k}_\epsilon(\bx,\bx)\right| + \frac{\text{Vol}(\mathcal{M})} {m_0}\tilde{k}_{\epsilon}(\bx,\bx)\notag \\
 &\leq & C\left(\epsilon+\sqrt{\frac{\log N}{N\epsilon^{d}}}\right)+ \frac{\text{Vol}(\mathcal{M})} {m_0} :=C_{\epsilon,N}.
 \EEA
}
\textcolor{red}{AT: This attempted diagonal bound now needs to be fixed to include the $\epsilon^{-d/2}$ factor from Proposition~\ref{prop:Gaussapprox}. }

All-in-all, we have
\[
\limsup_{N\to\infty,\,\epsilon\to0}
\sup_{x,y\in\mathcal M}
\left|H_{\epsilon,N}(x,y;t)-H(x,y;t)\right|
\le (C_{t/2}+C_t)e^{\nu_{J+1}t/2}.
\]
Finally, sending $J\to\infty$ proves the desired uniform convergence, since
$\nu_{J+1}\to-\infty$.

The important point is that fixed-mode convergence alone is not sufficient to
obtain uniform convergence to the full heat kernel. One also needs a positive-time
empirical diagonal bound such as
\[
\sup_{x\in\mathcal M}H_{\epsilon,N}(x,x;s)\le C_s.
\]
This prevents uncontrolled mass from remaining in empirical modes with
$j\to\infty$.

If the eigenvalues of the operator being used are not in $[0,1]$, then the above
argument does not apply as written.
}

\section{Kernel ridge regression analysis}\label{sec4}

In this section, our goal is to derive an error bound for kernel ridge regression using the diffusion maps kernel, $k_{\epsilon,N}$, where this data-driven kernel is constructed under fixed $\epsilon$ and $N$.
To this end, let us provide an overview of the oracle inequality for supervised learning with kernel ridge regression in \cite{steinwart2009optimal}. Since their result is valid on any compact metric input space $\mathcal{X}$ and bounded kernel $k:\mathcal{X}\times \mathcal{X}\to \BR$, let us rewrite their result in our notation with a $d-$dimensional closed (compact with no boundary) manifold input space, $\mathcal{X}=\mathcal{M}$, and $k$ will be a bounded kernel of $\mathcal{M}$. 

Let $(\bx_1,y_1), \ldots, (\bx_n,y_n)$ sampled from $P$, an unknown distribution on $\mathcal{M}\times [-M,M]$ where $M>0$ is some clipping constant and our goal is to find a function $f:\mathcal{M} \to [-M,M]\subset \BR$ that minimizes,
\[
\mathcal{R}(f)=\int_{\mathcal{M}\times[-M,M]} (y -f(\bx))^2 dP(\bx,y). 
\]
We denote the regression solution as $f^*$ and $\mathcal{R}^*= \mathcal{R}(f^*)$. We also denote the KRR solution as,
\[
f_{\eta} = \arg \min_{f\in \mathcal{H}} \frac{1}{N} \sum_{i=1}^N (y_i -f(\bx_i))^2 + \eta \|f\|^2_{\mathcal{H}},
\]
where $\eta$ denotes a regularization parameter. We also define the clipping function,
\[\hat{f}_{\eta}(\bx) = \begin{cases} -M, & \text{ if } \hat{f}_{\eta}(\bx)< -M \\ 
f_{\eta}(\bx), & \text{ if } \hat{f}_{\eta}(\bx)\in [-M,M]\\
M, & \text{ if } \hat{f}_{\eta}(\bx)> M.
\end{cases}
\]
For the discussion below, we define approximation error,
\[
A_2(\eta) = \inf_{f\in\mathcal{H}}\left(  \mathcal{R}(f) - \mathcal{R}^* + \eta \|f\|^2_{\mathcal{H}}\right) =  \inf_{f\in\mathcal{H}}\left(\| f-f^*\|^2_{L^2(\mu)} + \eta \|f\|^2_{\mathcal{H}}\right).
\]

The right hand term above is sometimes known as the K-functional $K(f^*,\eta)$. More generally, define two Banach spaces $F \subset E$ with continuous embedding $\text{id}:F\to E$, we defined the K-functional of $y\in E$ as, 
\[
K(y,t) = \inf_{x\in F} \left(\| x-y\|^2_{E} + t \|x\|^2_{F}\right).
\]
With this definition, we define the interpolation space
$[E,F]_{\theta,r}$ for $0<\theta<1$, $1\leq r \leq \infty$ as a Banach space of functions  $u\in E$ with bounded norm defined as follows,
\[
\|u\|_{\theta,r} =\begin{cases} 
\left( \int_0^\infty (t^{-\theta} K(x,t))^r t^{-1} dt \right)^{1/r} & \text{ if } r<\infty, \\
\sup_{t>0} t^{-\theta} K(x,t) & \text{ if } r=\infty. 
\end{cases}
\]

With this background, we state the following result:

\begin{prop}[Corollary 3 of \cite{steinwart2009optimal} adapted to our notation]\label{prop:krr_errorbound}
Let $k$ be a bounded measureable kernel on $\mathcal{M}$ and a separable RKHS $\mathcal{H}$.  Assume that the marginal $\mu \propto  1$ is uniform on $\mathcal{M}$ and the integral operator $T_k:L^2(\mathcal{M}) \to L^2(\mathcal{M})$,
\[
T_k f = \int_{\mathcal{M}} k(\cdot,\by) f(\by) d\textup{Vol}(\by), \quad \forall f\in L^2(\mathcal{M}),
\]
has eigenvalues that satisfy
$
\lambda_j(T_k) \leq a j^{-\frac{1}{p}}$, where $a\geq 16M^4$ and $p\in (0,1)$. Assume also,
\BEA
\|f\|_{\infty} \leq C \|f\|_{\mathcal{H}}^p \|f\|_{L^2(\mathcal{M})}^{1-p}, \quad \forall f\in \mathcal{H}.\label{embeddingcond}
\EEA
Then there exists a constant $c=c(p,C)$ such that for all $\eta\in (0,1]$, $\tau>0$, with probability higher than $1-3e^{-\tau}$,
\begin{eqnarray}
\mathcal{R}(\hat{f}_{\lambda}) - \mathcal{R}(f^*) \leq 9 A_2(\eta) + c \frac{a^p M^2\tau}{\eta^pN}.\label{eq:oracle}
\end{eqnarray}
\end{prop}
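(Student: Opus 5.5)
This proposition is essentially a restatement of Corollary~3 of \cite{steinwart2009optimal}, so the plan is to check that our setting satisfies its hypotheses and then translate notation. That corollary is stated on an arbitrary measurable input space $X$ with a bounded measurable kernel and a separable RKHS $\mathcal{H}$. It assumes the least-squares loss clipped at $M$, an eigenvalue decay $\lambda_j(T_k)\le a j^{-1/p}$ for the integral operator with respect to the marginal $\mu$, and the interpolation-type embedding condition \eqref{embeddingcond}. Its conclusion is an oracle inequality of the form $\mathcal{R}(\hat f_\eta)-\mathcal{R}^*\le 9A_2(\eta)+c\,a^pM^2\tau/(\eta^pN)$. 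We take $X=\mathcal{M}$, which is compact and hence a separable metric space, carrying its Borel $\sigma$-algebra.

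\textbf{Step 1: the kernel and the RKHS.} Since $k$ is bounded and measurable on the compact manifold $\mathcal{M}$, $\mathcal{H}$ embeds continuously into $L^\infty$ and into $L^2(\mu)$. If $k$ is continuous, as for the kernels we use (see Remark~\ref{rem:smoothbound}), separability of $\mathcal{H}$ is automatic. Otherwise we keep it as the stated hypothesis.

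\textbf{Step 2: identify the integral operator.} Because $\mu$ is uniform, $d\mu=\textup{Vol}(\mathcal{M})^{-1}d\textup{Vol}$. The operator in Steinwart--Hush--Scovel, defined with respect to $\mu$, therefore equals $\textup{Vol}(\mathcal{M})^{-1}T_k$, where $T_k$ is as in the statement. The two eigenvalue sequences differ only by the constant $\textup{Vol}(\mathcal{M})^{-1}$, so the decay assumption transfers directly, with $a$ rescaled by a geometric constant. That constant can be absorbed into $a$ or into $c$. The condition $a\ge16M^4$ is exactly the normalization used in \cite{steinwart2009optimal}.

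\textbf{Step 3: compare the approximation terms.} The $L^2(\mu)$ norm in $A_2$ and the $L^2(\mathcal{M})$ norm in \eqref{embeddingcond} are equivalent up to $\textup{Vol}(\mathcal{M})^{\pm1/2}$. So $C$ changes only by a constant factor. For squared loss, the identity $\mathcal{R}(f)-\mathcal{R}^*=\|f-f^*\|^2_{L^2(\mu)}$ gives the second expression for $A_2$. We then invoke the corollary with $\eta\in(0,1]$ and confidence $1-3e^{-\tau}$ to obtain \eqref{eq:oracle}, where $\hat f_\lambda$ denotes the clipped estimator $\hat f_\eta$.

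The only real obstacle is bookkeeping. One must check that the constants in the original corollary depend only on $p$ and $C$, after absorbing the volume factors from the change of measure, and that the original statement allows $\mathcal{H}$ with $\|k\|_\infty$ arbitrary, or else include $\|k\|_\infty$ in $c$. I would check the exact form of the embedding condition in \cite{steinwart2009optimal}, including whether it is stated with $\|\cdot\|_{L^2(\mu)}$, and adjust $C$ accordingly.
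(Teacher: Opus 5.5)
Your proposal is correct and follows the same route as the paper. The paper gives no independent argument: it imports Corollary~3 of \cite{steinwart2009optimal} after translating notation to the input space $\mathcal{M}$ with uniform marginal. Your bookkeeping of the $\textup{Vol}(\mathcal{M})$ factors (between $d\mu$ and $d\textup{Vol}$ in $T_k$ and in the embedding condition) and of the normalization $\|k\|_\infty\le 1$ is exactly the adaptation the paper leaves implicit; note only that once these factors are absorbed, $c$ depends on $\textup{Vol}(\mathcal{M})$ (and possibly $\|k\|_\infty$), not on $p$ and $C$ alone.
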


In \cite{steinwart2009optimal}, the authors deduce an optimal rate with an appropriate choice of regularization parameter $\eta$ based on this proposition and an additional assumption on the approximation error $A_2(\eta)$. In fact, they also argue that the same optimal rate can be achieved when $\eta$ is chosen through training validation, which is the standard practical procedure. To employ this result, we need to verify the conditions in Proposition~\ref{prop:krr_errorbound} for the DM kernel. While the boundedness and spectra decay conditions are intuitive, let us first explain what inequality in \eqref{embeddingcond} means. 

\begin{remark}
The condition in \eqref{embeddingcond} can be explained as follows:
\begin{enumerate}
\item[a)] $[L^2(\mathcal{M}),\mathcal{H}]_{p,1}\hookrightarrow L^\infty(\mathcal{M})$ if and only if \eqref{embeddingcond} is satisfied (see Proposition~2.10 in \cite{bennett1988interpolation}).
\item[b)] If $\mathcal{H}$ is an RKHS with equivalent norm to Sobolev space $H^m(\mathcal{M})$, then the interpolation space is a Besov space $[L^2(\mathcal{M}),H^m(\mathcal{M})]:= B^{pm}_{2,1}(\mathcal{M})$ that is continuously embedded in $L^\infty(\mathcal{M})$ if $p = \frac{d}{2m}$ (see Theorem 7.34 in \cite{adams2003sobolev}). This means that for the Mat\'ern kernel, $k_M$, whose RKHS norm is equivalent to the Sobolev norm $H^m(\mathcal{M})$, the inequality in \eqref{embeddingcond} is satisfied. For convenience of the discussion below, we denote the RKHS corresponding to the Mat\'ern kernel as $\mathcal{H}_{k_M}$. We will specify $k_M$ in the following discussion and the required regularity order $m$ in the analysis below.
\item[c)] Finally, any function in an RKHS that is continuously embedded in $\mathcal{H}_{k_M}$ satisfies inequality in \eqref{embeddingcond}. This suggests that the error rate in the proposition above holds for the Gaussian kernel $\tilde{k}_\epsilon$ if the RKHS $\mathcal{H}_{\tilde{k}_\epsilon}$ is continuously embedded in $\mathcal{H}_{k_M}$, which is what we plan to show in the following.
\end{enumerate}
\end{remark}

Let us set up the notations with the following definition.

\begin{defn}\label{def:kernels}
Define also a kernel on $\BR^n$,
\[
\phi:\BR^n\times \BR^n \to \BR, \quad \phi|_{\mathcal{M}\times \mathcal{M}} = k,
\]
such that $k:\mathcal{M}\times\mathcal{M}\to \BR$ is the restriction of $\phi$ on the manifold $\mathcal{M}$. Particular interests are the extensions of the Gaussian and Mat\'ern kernels,
\[ \tilde{\phi}_{\epsilon}|_{\mathcal{M}\times \mathcal{M}} = \tilde{k}_{\epsilon}, \qquad \phi_{M}|_{\mathcal{M}\times \mathcal{M}} = k_{M},
\]
respectively. The associated RKHSs will be denoted as:
\BEA
\mathcal{H}_{\tilde{\phi}_{\epsilon}}\,&:& \text{ corresponds to the RKHS of the Gaussian kernel on } \mathbb{R}^n, \notag\\
\mathcal{H}_{\tilde{k}_{\epsilon}}\,&:& \text{ corresponds to the RKHS of the Gaussian kernel on $\mathcal{M}$}, \notag\\
\mathcal{H}_{\phi_{M}}\,&:& \text{ corresponds to the RKHS of Mat\'ern kernel on $\BR^n$ with norm equivalent to $H^s(\BR^n)$ with $s>\frac{n-d}{2}$}, \notag\\
\mathcal{H}_{k_{M}}\,&:& \text{ corresponds to the RKHS of Mat\'ern kernel on $\mathcal{M}$}. \notag
\EEA
\end{defn}

With the definition above, our plan is as follows: Prove that $\mathcal{H}_{\epsilon,N}$ and $\mathcal{H}_{\tilde{k}_\epsilon}$ are isometrically isomorphic. Since the RKHS induced by Gaussian kernel is continuously embedded in RKHS induced by any Mat\'ern kernels in $\BR^n$, that is, $ \mathcal{H}_{\tilde{\phi}_\epsilon} \hookrightarrow \mathcal{H}_{\phi_M}$, we will subsequently show that this continuous embedding is still valid when the kernels are restricted on the sub-manifold $\mathcal{M}$, that is,  $\mathcal{H}_{\tilde{k}_{\epsilon}} \hookrightarrow \mathcal{H}_{k_M}$. With these results, we will show that the inequality in \eqref{embeddingcond} is valid for $f\in \mathcal{H}_{\epsilon,N}$. Together with verifing all other assumptions, DMKRR enjoys the oracle inequality in Proposition~\ref{prop:krr_errorbound}.

\begin{lem}\label{lem:isoembedding} Assume that 
\[
\phi(\bx) = \frac{1}{q_{\epsilon,N}(\bx)\sqrt{\hat{q}_{\epsilon,N}(\bx)}} \neq 0,
\]
for all $\bx \in \mathcal{M}$. Then the RKHS induced by the DM kernel is symmetric positive definite. Also, the RKHS $\mathcal{H}_{\epsilon,N}=\left\{Uf := \phi f:\ f\in \mathcal H_{\tilde{k}_\epsilon}\right\}$ induced by the DM kernel, $k_{\epsilon,N}$, is isometrically isomorphic to the RKHS $\mathcal{H}_{\tilde{k}_\epsilon}$ induced by the Gaussian kernel, $\tilde{k}_\epsilon$. That is, $\| \phi f\|_{\mathcal H_{\epsilon,N}}=\|f\|_{\mathcal H_{\tilde{k}_\epsilon}}$.
\end{lem}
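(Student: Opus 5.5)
The plan is to first observe that the DM kernel factorizes as a multiplicative (conformal) rescaling of the Gaussian kernel, and then apply the standard result that such a rescaling of a reproducing kernel induces an isometric isomorphism of the corresponding RKHSs. Unwinding the definitions in \eqref{scalarkernel}, we have
\[
k_{\epsilon,N}(\bx,\by)=\frac{\tilde k_\epsilon(\bx,\by)}{q_{\epsilon,N}(\bx)q_{\epsilon,N}(\by)\sqrt{\hat q_{\epsilon,N}(\bx)\hat q_{\epsilon,N}(\by)}}=\phi(\bx)\,\tilde k_\epsilon(\bx,\by)\,\phi(\by).
\]
Note that $q_{\epsilon,N}>0$ and $\hat q_{\epsilon,N}>0$ on $\mathcal M$, since they are finite averages of strictly positive Gaussian values; hence $\phi$ is well defined, real, smooth, and nonvanishing (cf.\ Remark~\ref{rem:smoothbound}(a)). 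The hypothesis $\phi\neq0$ is therefore automatic, but it is exactly what the argument uses.

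For positive definiteness, take any distinct points $\bx_1,\dots,\bx_m\in\mathcal M$ and coefficients $c\in\BR^m\setminus\{0\}$. Then
\[
\sum_{i,j}c_ic_jk_{\epsilon,N}(\bx_i,\bx_j)=\sum_{i,j}(c_i\phi(\bx_i))(c_j\phi(\bx_j))\tilde k_\epsilon(\bx_i,\bx_j),
\]
and the vector $(c_i\phi(\bx_i))_i$ is nonzero because $\phi\neq0$. Since the Gaussian kernel restricted to distinct points of $\mathcal M\subset\BR^n$ is strictly positive definite (its Fourier transform is positive on $\BR^n$), this quadratic form is strictly positive. Symmetry of $k_{\epsilon,N}$ is immediate from the factorization.

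For the isometry, define $U:\mathcal H_{\tilde k_\epsilon}\to\{\phi f\}$ by $Uf=\phi f$, and equip the image with the norm $\|\phi f\|:=\|f\|_{\mathcal H_{\tilde k_\epsilon}}$. $U$ is injective because $\phi$ never vanishes, so this is a well-defined Hilbert space isometric to $\mathcal H_{\tilde k_\epsilon}$. It remains to check that this space is the RKHS of $k_{\epsilon,N}$. First, $k_{\epsilon,N}(\cdot,\by)=\phi\,\big(\phi(\by)\tilde k_\epsilon(\cdot,\by)\big)=U\big(\phi(\by)\tilde k_\epsilon(\cdot,\by)\big)$ lies in the space. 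Second, the reproducing property holds:
\[
\langle \phi f,k_{\epsilon,N}(\cdot,\by)\rangle=\langle f,\phi(\by)\tilde k_\epsilon(\cdot,\by)\rangle_{\mathcal H_{\tilde k_\epsilon}}=\phi(\by)f(\by)=(Uf)(\by).
\]
By the uniqueness of the RKHS associated with a positive definite kernel (Moore--Aronszajn), this space coincides with $\mathcal H_{\epsilon,N}$ with equal norms, giving $\|\phi f\|_{\mathcal H_{\epsilon,N}}=\|f\|_{\mathcal H_{\tilde k_\epsilon}}$.

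The main obstacle is reconciling this with the finite-dimensional description \eqref{RKHS} of $\mathcal H_{\epsilon,N}$ via the $N$ Nystr\"om functions. That description only spans $\{k_{\epsilon,N}(\cdot,\bx_i)\}$, whereas the Moore--Aronszajn RKHS of $k_{\epsilon,N}$ is infinite-dimensional. I would resolve this by taking the lemma's $\mathcal H_{\epsilon,N}$ to mean the full RKHS of the kernel $k_{\epsilon,N}$ on $\mathcal M\times\mathcal M$, and noting that \eqref{Mercer} represents the empirical operator rather than the kernel itself. If instead the finite-dimensional space is intended, the same argument goes through verbatim with $\mathcal H_{\tilde k_\epsilon}$ replaced by $\mathrm{span}\{\tilde k_\epsilon(\cdot,\bx_i)\}$.
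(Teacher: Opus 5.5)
Your proof is correct, but it is organized differently from the paper's. The paper defines $U$ only on the pre-Hilbert span $\mathcal S_{\tilde k_\epsilon}$ of Gaussian kernel sections. It checks the isometry there through the Gram identity $\sum_{i,j}\frac{\alpha_i}{\phi(\bx_i)}\frac{\beta_j}{\phi(\bx_j)}k_{\epsilon,N}(\bx_i,\bx_j)=\sum_{i,j}\alpha_i\beta_j\tilde k_\epsilon(\bx_i,\bx_j)$, extends $U$ by continuity, and obtains surjectivity from the density of $\mathcal S_{\epsilon,N}$ in $\mathcal H_{\epsilon,N}$ together with the closedness of the range of an isometry. You instead define $U$ as multiplication by $\phi$ on all of $\mathcal H_{\tilde k_\epsilon}$ and transport the inner product to $\phi\,\mathcal H_{\tilde k_\epsilon}$. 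You then verify that the sections $k_{\epsilon,N}(\cdot,\by)$ lie in this space and reproduce, and conclude by the uniqueness part of Moore--Aronszajn.

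Your route gives directly the set identity $\mathcal H_{\epsilon,N}=\phi\,\mathcal H_{\tilde k_\epsilon}$ asserted in the lemma. The paper's route leaves implicit that the continuous extension of $U$ is still pointwise multiplication by $\phi$; this holds because norm convergence in an RKHS implies pointwise convergence. In exchange, the paper's route stays at the level of finite kernel expansions and does not need the uniqueness theorem.

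You also prove slightly more on the first claim. The paper only shows that the quadratic form is nonnegative. Your use of $\phi\neq0$ and the strict positive definiteness of the Gaussian on distinct points of $\BR^n$ gives strict positive definiteness, which matches the wording of the lemma.

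Finally, your reading of $\mathcal H_{\epsilon,N}$ as the full RKHS of $k_{\epsilon,N}$ is the one the paper's proof uses implicitly, since it takes kernel sections at every $\bx\in\mathcal M$. Your fallback for the finite-dimensional reading is also sound. Indeed, \eqref{Mercer} is the Nystr\"om kernel $\sum_{i,j}k_{\epsilon,N}(\bx,\bx_i)[\mathbf K_{\epsilon,N}^{-1}]_{ij}k_{\epsilon,N}(\bx_j,\by)$, where $\mathbf K_{\epsilon,N}$ is invertible by your strict positive definiteness. Its RKHS in \eqref{RKHS} is $\mathrm{span}\{k_{\epsilon,N}(\cdot,\bx_i)\}$ with the norm inherited from the full RKHS, and $U$ maps $\mathrm{span}\{\tilde k_\epsilon(\cdot,\bx_i)\}$ isometrically onto it.
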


\begin{proof}
    By definition, we can write
\BEA
k_{\epsilon,N}(\bx,\by)= \phi(\bx)\phi(\by) \tilde{k}_\epsilon(\bx,\by), \label{DM_Gauss}
\EEA
where $\phi \neq 0$. 

For any \(a_1,\dots,a_m\in\mathbb R\),
\[
\sum_{i,j}a_i a_j k_{\epsilon,N}(\bx_i,\bx_j)
=
\sum_{i,j}(a_i\phi(\bx_i))(a_j\phi(\bx_j))\tilde{k}_\epsilon(\bx_i,\bx_j)\ge0,
\]
since \(\tilde{k}_\epsilon\) is PSD. So \(k_{\epsilon,N}\) is PSD. Next, let us characterize $\mathcal{H}_{\epsilon,N}$ in terms of the RKHS $\mathcal{H}_{\tilde{k}_\epsilon}$ induced by $\tilde{k}_\epsilon$.  Let
\[
\mathcal S_{\tilde{k}_\epsilon}:=\operatorname{span}\{\tilde{k}_\epsilon(\cdot,\bx):\bx\in\mathcal M\},
\quad
\mathcal S_{\epsilon,N}:=\operatorname{span}\{k_{\epsilon,N}(\cdot,\bx):\bx\in\mathcal M\}.
\]
Define \(U:\mathcal S_{\tilde{k}_\epsilon}\to \mathcal S_{\epsilon,N}\) by
\[
(Uf)(\bx):=\phi(\bx)f(\bx),
\]
for all $\bx\in \mathcal{M}$.
If
$
f=\sum_{i=1}^N \alpha_i \tilde{k}_\epsilon(\cdot,\bx_i)$, then
\[
(Uf)(\bx)=\phi(\bx)\sum_{i=1}^N\alpha_i \tilde{k}_\epsilon(\bx,\bx_i)
=\sum_{i=1}^N \frac{\alpha_i}{\phi(\bx_i)}\,k_{\epsilon,N}(\bx,\bx_i),
\]
using the identity in \eqref{DM_Gauss}.

Now take
$
f=\sum_{i=1}^N \alpha_i \tilde{k}_\epsilon(\cdot,\bx_i)$ and
$g=\sum_{i=1}^N \beta_i \tilde{k}_\epsilon(\cdot,\bx_i).
$
Using the RKHS inner product on \(\mathcal H_{\epsilon,N}\),
\[
\langle Uf,Ug\rangle_{\mathcal{H}_{\epsilon,N}}
=
\sum_{i,j=1}^N\frac{\alpha_i}{\phi(\bx_i)}\frac{\beta_j}{\phi(\bx_j)}k_{\epsilon,N}(\bx_i,\bx_j)
=
\sum_{i,j=1}^N\alpha_i\beta_j \tilde{k}_\epsilon(\bx_i,\bx_j)
=
\langle f,g\rangle_{\mathcal{H}_{k_\epsilon}}.
\]
Hence \(U\) is an isometry on \(\mathcal S_G\), by the extension theorem for bounded linear map \cite{conway2019course}, it extends uniquely to an isometry $U:\mathcal H_{\tilde{k}_\epsilon}\to \mathcal H_{\epsilon,N}$.

It remains to show surjectivity. For each \(\bx \in \mathcal{M}\),
\[
k_{\epsilon,N}(\cdot,\bx)=\phi(\bx)\,U(\tilde{k}_\epsilon(\cdot,\bx)),
\]
so \(\mathcal S_{\epsilon,N}\subset \operatorname{Ran}(U)\). Since \(\mathcal S_{\epsilon,N}\) is dense in \(\mathcal H_{\epsilon,N}\) and \(\operatorname{Ran}(U)\) is closed (range of an isometry), \(\operatorname{Ran}(U)=\mathcal H_{\epsilon,N}\). Therefore \(U\) is an isometric isomorphism.
\end{proof}

\begin{remark}
The assumption on $\phi$ is rather generic; otherwise the DM kernel is just trivially zero and not well defined. From \eqref{eq:phi_e}, \eqref{eq:qe_asymp}, \eqref{eq:qhate_asymp}, and the uniform sampling density $q(\bx) = 1/\text{Vol}(\mathcal{M})$, one can see that, 
\[
\phi(\bx):=   \frac{1}{q_{\epsilon,N}(\bx)\sqrt{\hat q_{\epsilon,N}(\bx)}} = \epsilon^{-d/4}\frac{\sqrt{\text{Vol}(\mathcal{M})}}{\sqrt{m_0}}
\left[
1
-\epsilon\frac{m_2}{2m_0}\,\omega(\bx)
+O(\epsilon^2) + O\left(\frac{\log N}{N\epsilon^d}\right),
\right],
\]
which is roughly constant for $\epsilon,N$ that chosen to satisfy Assumption~\ref{assumption1}.Since the expansion above is valid uniformly, $0<\phi^-\leq \|\phi\|_\infty \leq \phi^+$ is uniformly bounded. 
\end{remark}

The next result is the prevalent of the continuously embedding under the restriction on the manifold.

\begin{lem}\label{lem:embedding}
With the Definition~\ref{def:kernels}, that is, let $\phi_M:\BR^n\times\BR^n \to\BR$ denotes the Mat\'ern kernel with RKHS space that has equivalent Sobolev norm, $H^s(\BR^n)$, with $s>\frac{n-d}{2}$. Then, the RKHS corresponding to Gaussian kernel, $\mathcal{H}_{\tilde{k}_\epsilon}$ is continuously embedded in the RKHS space $\mathcal{H}_{k_M}$ with norm equivalent to $H^{s-\frac{n-d}{2}}(\mathcal M)$. In particular, there is $C>0$ such that
\[
\|f\|_{H^{s-\frac{n-d}{2}}(\mathcal M)}\le C\,\|f\|_{\mathcal H_{\tilde{k}_{\epsilon}}(\mathcal M)},\qquad \forall f\in\mathcal H_{\tilde{k}_{\epsilon}}(\mathcal M).
\]
\end{lem}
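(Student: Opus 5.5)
The argument goes through the ambient space $\BR^n$. It uses three standard facts.

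First, the restriction theorem for RKHSs (Aronszajn): if $\phi$ is a positive definite kernel on $\BR^n$ and $k=\phi|_{\mathcal M\times\mathcal M}$, then
\[
\mathcal H_k=\{F|_{\mathcal M}:F\in\mathcal H_\phi\},\qquad \|f\|_{\mathcal H_k}=\min\{\|F\|_{\mathcal H_\phi}: F|_{\mathcal M}=f\}.
\]
The minimum is attained by a unique minimal-norm extension $Ef\in\mathcal H_\phi$, which is the orthogonal projection of any extension onto the closed subspace $\overline{\operatorname{span}}\{\phi(\cdot,\bx):\bx\in\mathcal M\}$.

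Second, the embedding $\mathcal H_{\tilde\phi_\epsilon}\hookrightarrow H^s(\BR^n)$ on $\BR^n$. Both kernels are translation invariant, so their native-space norms are Fourier-weighted $L^2$ norms, with weight $\widehat{\tilde\phi_\epsilon}(\omega)^{-1}\propto e^{\epsilon\|\omega\|^2}$ for the Gaussian. Since $(1+\|\omega\|^2)^s\le C_{s,\epsilon}e^{\epsilon\|\omega\|^2}$, we get $\|F\|_{H^s(\BR^n)}\le C\|F\|_{\mathcal H_{\tilde\phi_\epsilon}}$, and the Sobolev norm is equivalent to the Matérn native norm $\|F\|_{\mathcal H_{\phi_M}}$.

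Third, the trace theorem for a smooth compact embedded $d$-dimensional submanifold of $\BR^n$: for $s>\frac{n-d}{2}$ the trace operator $\gamma:H^s(\BR^n)\to H^{s-\frac{n-d}{2}}(\mathcal M)$, $\gamma F=F|_{\mathcal M}$, is bounded. The proof localizes with a partition of unity subordinate to tubular-neighbourhood charts and applies the flat trace theorem for codimension $n-d$ in each chart.

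The chain of estimates is then as follows. Given $f\in\mathcal H_{\tilde k_\epsilon}$, take the minimal extension $F=Ef\in\mathcal H_{\tilde\phi_\epsilon}$, so $\|F\|_{\mathcal H_{\tilde\phi_\epsilon}}=\|f\|_{\mathcal H_{\tilde k_\epsilon}}$. Then
\[
\|f\|_{H^{s-\frac{n-d}{2}}(\mathcal M)}=\|\gamma F\|_{H^{s-\frac{n-d}{2}}(\mathcal M)}\le C_\gamma\|F\|_{H^s(\BR^n)}\le C_\gamma C\|F\|_{\mathcal H_{\tilde\phi_\epsilon}}=C'\|f\|_{\mathcal H_{\tilde k_\epsilon}}.
\]
This is the displayed inequality. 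For the embedding into $\mathcal H_{k_M}$ itself, the restriction theorem applied to $\phi_M$ gives
\[
\|f\|_{\mathcal H_{k_M}}\le\|F\|_{\mathcal H_{\phi_M}}\lesssim\|F\|_{H^s(\BR^n)}\lesssim\|f\|_{\mathcal H_{\tilde k_\epsilon}}.
\]

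The main obstacle is the claim that $\mathcal H_{k_M}$ has norm equivalent to $H^{s-\frac{n-d}{2}}(\mathcal M)$. This requires, besides the bounded trace map, a bounded right inverse (extension operator) $H^{s-\frac{n-d}{2}}(\mathcal M)\to H^s(\BR^n)$. Combined with the restriction theorem, the two bounds give the lower and upper norm equivalences $\|f\|_{\mathcal H_{k_M}}\simeq\inf_{F|_{\mathcal M}=f}\|F\|_{H^s}\simeq\|f\|_{H^{s-\frac{n-d}{2}}(\mathcal M)}$. I would construct the extension locally in Fermi (tubular) coordinates: extend each localized piece constantly in the normal directions and multiply by a normal cutoff. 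I would cite the standard result (Fuselier–Wright's restricted-kernel argument, or Triebel's trace theory) rather than reprove it. The constants depend on $\epsilon$ and on the geometry of $\mathcal M$, but not on $f$, which is all the lemma requires.
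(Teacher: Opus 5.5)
Your argument is correct and is essentially the paper's proof. The paper likewise takes the isometric minimal extension $Ef\in\mathcal H_{\tilde\phi_\epsilon}$ from the restriction lemma (Lemma~4 of Fuselier--Wright). It uses $\mathcal H_{\tilde\phi_\epsilon}\hookrightarrow\mathcal H_{\phi_M}$ on $\BR^n$ to get $\|f\|_{\mathcal H_{k_M}}\le\|Ef\|_{\mathcal H_{\phi_M}}\le C\|f\|_{\mathcal H_{\tilde k_\epsilon}}$. It then simply asserts that the trace space of $H^s(\BR^n)$ on $\mathcal M$ is $H^{s-\frac{n-d}{2}}(\mathcal M)$ with equivalent norms.

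Your direct route for the displayed inequality (Gaussian native space into $H^s(\BR^n)$, then the bounded trace) is a mild improvement. It uses only the trace bound and never touches the Mat\'ern kernel. So it holds for every $s>\frac{n-d}{2}$, whereas a Mat\'ern kernel with native space $H^s(\BR^n)$ exists only for $s>\frac n2$.

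One step in your side remark would fail as written. Extending constantly in the normal directions and multiplying by a normal cutoff does not give a bounded right inverse $H^{s-\frac{n-d}{2}}(\mathcal M)\to H^s(\BR^n)$. In the flat model, $g(x')\chi(x'')$ has Fourier transform $\hat g(\xi')\hat\chi(\xi'')$, so its $H^s(\BR^n)$ norm is comparable to $\|g\|_{H^s(\BR^d)}$. There is no gain of $\frac{n-d}{2}$ derivatives, and the norm is infinite for generic $g\in H^{s-\frac{n-d}{2}}$.

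The extension must smooth tangentially at a scale comparable to the normal distance. For example, take $\chi(x'')\,\mathcal F^{-1}_{\xi'}\big[\hat g(\xi')\,\theta(\langle\xi'\rangle x'')\big]$ with $\langle\xi'\rangle=(1+|\xi'|^2)^{1/2}$, $\theta$ Schwartz on $\BR^{n-d}$, and $\theta(0)=1$. The change of variables $\xi''=\langle\xi'\rangle\eta$ shows that, without the cutoff, its $H^s(\BR^n)$ norm is a constant multiple of $\|g\|_{H^{s-\frac{n-d}{2}}(\BR^d)}$.

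Since you ultimately cite Triebel or Fuselier--Wright for the extension operator, the proof stands. The sketched constant-in-normal construction should still be replaced.
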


\begin{proof}
The key part of this proof uses similar idea as the proof of Theorem~5 in \cite{fuselier2012scattered}. By Lemma~4 in \cite{fuselier2012scattered},
for any positive-definite kernel $\phi$ on $\BR^n$ and its restriction $k:=\phi|_{\mathcal M\times \mathcal M}$,
$
\mathcal H_{k}(\mathcal M)=T_{\mathcal M}\big(\mathcal H_\phi(\BR^n)\big),
$
where $T_\mathcal{M}:H_\phi(\BR^n),\ T_{\mathcal M} \to H_{k}(\mathcal M)$ is the trace operator on the manifold $\mathcal{M}$. Here, the RKHS $\mathcal{H}_k$ is defined with
with norm (see Chapter 10 of \cite{wendland2004scattered}) defined as,
\[
\|f\|_{\mathcal H_{k}}=\inf\left\{\|g\|_{\mathcal H_\phi}:\ g\in\mathcal H_\phi(\BR^n),\ T_{\mathcal M}g=f\right\}.
\]
Applying this lemma to the two restricted of Gaussian and Mat\'ern kernels as defined in Definition~\ref{def:kernels}, we have:
$T_{\mathcal M}\big(\mathcal H_{\tilde{\phi}_{\epsilon}}\big)=\mathcal H_{\tilde{k}_{\epsilon}}$ and $T_{\mathcal M}\big(\mathcal H_{\phi_M}\big)=\mathcal H_{k_M}.
$ 

With this background, let us begin the proof. Take $f\in\mathcal H_{\tilde{k}_{\epsilon}}$. By the restricted-kernel trace/extension property (Lemma 4 in \cite{fuselier2012scattered}), there exists an isometric embedding
$
E:\mathcal H_{\tilde{k}_{\epsilon}}\to\mathcal H_{\tilde{\phi}_{\epsilon}}, T_{\mathcal M}(E f)=f,
$
where $Ef\in\mathcal H_{\tilde{\phi}_{\epsilon}}$ and
$\|Ef\|_{\mathcal H_{\tilde{\phi}_{\epsilon}}}=\|f\|_{\mathcal H_{\tilde{k}_{\epsilon}}}.
$
Since RKHS of the Gaussian kernel is continuously embedded in RKHS of the Mat\'ern kernel,  $\mathcal{H}_{\tilde{\phi}_\epsilon} \hookrightarrow \mathcal{H}_{\phi_M}$, it is clear that $Ef \in \mathcal{H}_{\phi_M}$ and
$\|Ef\|_{\mathcal{H}_{\phi_M}} \leq C\|Ef\|_{\mathcal{H}_{\tilde{\phi}_{\epsilon}}}$, for some $C>0$.
Thus,
\BEA
\|f\|_{\mathcal{H}_{k_M}} \leq \|Ef\|_{\mathcal{H}_{\phi_M}}\leq C\|Ef\|_{\mathcal{H}_{\tilde{\phi}_{\epsilon}}} = C \|f\|_{\mathcal{H}_{\tilde{k}_{\epsilon}}}.\label{embedding_proof}
\EEA
where the first inequality is by the definition of restricted kernel, the second inequality is by the fact that RKHS of the Gaussian kernel is continuously embedded in the RKHS of the  Mat\'ern kernel. Hence $\mathcal H_{\tilde{k}_{\epsilon}}\hookrightarrow \mathcal H_{k_M}$ continuously. 

By the norm equivalence assumption, $\mathcal H_{\psi_M}\simeq H^s(\BR^n), s>\frac{n-d}{2},
$
its trace space on $\mathcal M$ is $H^{s-\frac{n-d}{2}}(\mathcal M)$,  with equivalent norms, i.e.
$\mathcal H_{k_M}\simeq H^{s-\frac{n-d}{2}}(\mathcal M).$ Combining this and \eqref{embedding_proof}, the proof is complete.
\end{proof}

With these two lemmas, we can now state our main result:
\begin{theo}\label{thm2}
Let the Assumption~\ref{assumption1} and the assumptions in Lemma~\ref{lem:embedding} to be valid. Then, the oracle inequality \eqref{eq:oracle} is valid for kernel ridge regression solution with the DM kernel, $k=k_{\epsilon,N}$, or equivalently, $\mathcal{H} = \mathcal{H}_{\epsilon,N}$, and $p = \frac{d}{2(s-\frac{n-d}{2})}$ for any $s>n/2$.    
\end{theo}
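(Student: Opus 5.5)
We verify the hypotheses of Proposition~\ref{prop:krr_errorbound} for $k=k_{\epsilon,N}$, $\mathcal{H}=\mathcal{H}_{\epsilon,N}$: boundedness and measurability, separability, the eigenvalue decay $\lambda_j(T_k)\le a j^{-1/p}$, and the interpolation inequality \eqref{embeddingcond} with $p=\frac{d}{2m}$, $m:=s-\frac{n-d}{2}$. Boundedness and measurability follow from Remark~\ref{rem:smoothbound}: $k_{\epsilon,N}\in C^\infty(\mathcal{M}\times\mathcal{M})$ and $k_{\epsilon,N}=\phi(\bx)\phi(\by)\tilde{k}_\epsilon(\bx,\by)$ with $0<\phi^-\le\phi\le\phi^+$ and $\tilde{k}_\epsilon\le1$. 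Separability holds because $\mathcal{H}_{\epsilon,N}$ is the RKHS of a continuous kernel on a compact metric space; alternatively, it is the isometric image of the separable space $\mathcal{H}_{\tilde{k}_\epsilon}$ under the map $U$ of Lemma~\ref{lem:isoembedding}. The condition $s>n/2$ is exactly what makes $m>d/2$, so that $p\in(0,1)$.

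The main step is \eqref{embeddingcond}. Take $g\in\mathcal{H}_{\epsilon,N}$ and write $g=\phi f$ with $f\in\mathcal{H}_{\tilde{k}_\epsilon}$ and $\|g\|_{\mathcal{H}_{\epsilon,N}}=\|f\|_{\mathcal{H}_{\tilde{k}_\epsilon}}$, using Lemma~\ref{lem:isoembedding}. By Lemma~\ref{lem:embedding}, $\|f\|_{H^m(\mathcal{M})}\le C\|f\|_{\mathcal{H}_{\tilde{k}_\epsilon}}$. On a closed manifold with $m>d/2$, the Gagliardo--Nirenberg type inequality
\[
\|f\|_\infty\le C\|f\|_{H^m(\mathcal{M})}^{d/(2m)}\|f\|_{L^2(\mathcal{M})}^{1-d/(2m)}
\]
holds. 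One can cite it directly, or obtain it via the Besov embedding $[L^2,H^m]_{p,1}=B^{pm}_{2,1}\hookrightarrow L^\infty$ together with the Remark (a)--(b) characterization. Converting back to $g$ uses $\|g\|_\infty\le\phi^+\|f\|_\infty$ and $\|f\|_{L^2}\le(\phi^-)^{-1}\|g\|_{L^2}$. This gives
\[
\|g\|_\infty\le C'\|g\|_{\mathcal{H}_{\epsilon,N}}^p\|g\|_{L^2(\mathcal{M})}^{1-p},
\]
where $C'$ depends on $\phi^\pm$ and hence on $\epsilon,N$; this dependence only affects the constant $c$. Note that the multiplication by $\phi$ is harmless at this point, since we never need $\phi f\in H^m$, only sup and $L^2$ norms.

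For the eigenvalue decay, I would use the fact that \eqref{embeddingcond} itself implies an entropy/eigenvalue bound. A cleaner route: $\mathcal{H}_{\epsilon,N}$ embeds continuously into $\phi\cdot H^m(\mathcal{M})$, and since $\phi\in C^\infty$ is bounded away from zero, multiplication by $\phi^{\pm1}$ is bounded on $H^m$. Hence $\mathcal{H}_{\epsilon,N}\hookrightarrow H^m(\mathcal{M})$. The approximation numbers of $H^m(\mathcal{M})\hookrightarrow L^2(\mathcal{M})$ decay like $j^{-m/d}$ (Weyl's law for $(1-\Delta_g)^{-m/2}$). Since $\lambda_j(T_k)$ are the squares of the singular values of the embedding $\mathcal{H}\hookrightarrow L^2$, we obtain $\lambda_j(T_k)\le a j^{-2m/d}=a j^{-1/p}$, and $a$ can be enlarged to meet $a\ge16M^4$.

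I expect the main obstacle to be justifying that $\phi$ is smooth with bounds independent of the relevant quantities, together with the claim that the trace of $H^s(\BR^n)$ equals $H^m(\mathcal{M})$ used in Lemma~\ref{lem:embedding}. For the theorem as stated, which has fixed $\epsilon,N$, only finiteness of the constants is needed. Remark~\ref{rem:smoothbound}(a) gives smoothness and positivity of $q_{\epsilon,N},\hat{q}_{\epsilon,N}$, which suffices. Finally, Proposition~\ref{prop:krr_errorbound} applies with the uniform marginal, and \eqref{eq:oracle} follows.
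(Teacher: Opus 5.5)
Your proposal is correct. For the interpolation inequality \eqref{embeddingcond} it is essentially the paper's argument. You factor $g=\phi f$ through the isometry of Lemma~\ref{lem:isoembedding}, apply the Mat\'ern/Sobolev embedding of Lemma~\ref{lem:embedding} and the $H^m$ interpolation inequality, and transfer back with $\phi^{\pm}$. Your bound $\|f\|_{L^2}\le(\phi^-)^{-1}\|g\|_{L^2}$ is the correct form of the step where the paper briefly writes $\|\phi\|_\infty$ in place of $\inf\phi$. You also get boundedness deterministically from $k_{\epsilon,N}=\phi(\bx)\phi(\by)\tilde k_\epsilon$, rather than from the high-probability estimate of Proposition~\ref{prop:Gaussapprox}.

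The genuine difference is the eigenvalue-decay hypothesis. The paper derives it from the spectral convergence in Lemma~\ref{lem:spectral} plus a consistency result of \cite{von2008consistency}. It concludes $\lambda_j(T_k)\approx a\,e^{\epsilon\nu_j}\approx a\,e^{-\epsilon j^{2/d}}$, an exponential decay that trivially beats $aj^{-1/p}$. You instead reuse the embedding chain. Multiplication by the smooth positive $\phi$ is bounded on $H^m(\mathcal M)$, so $\mathcal H_{\epsilon,N}\hookrightarrow H^m(\mathcal M)$. Then $\lambda_j(T_k)=s_j(\mathcal H_{\epsilon,N}\hookrightarrow L^2)^2\le C\,s_j(H^m\hookrightarrow L^2)^2\le a j^{-2m/d}=aj^{-1/p}$ by Weyl's law.

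What each route buys:
\begin{itemize}
\item The paper's route, if fully justified, would give far faster decay and tie the spectrum of $T_k$ to the Laplace--Beltrami operator. As written, however, it relies on three things: the asymptotic regime of Assumption~\ref{assumption1}; mode-by-mode convergence that is only established for finitely many $j$ (through $G(K,\Gamma_K)$); and a transfer from eigenvalues of the empirical operator on $L^2(\mu_N)$ to those of $T_k$ on $L^2(\mathcal M)$, with a constant $a$ not shown to be uniform in $j$.
\item Your route is deterministic for fixed $\epsilon,N$, uniform in $j$, and produces exactly the exponent $1/p$ the proposition requires. Its only cost is that $a$, like the constant in \eqref{embeddingcond}, depends on $\epsilon$ and $N$ through $\phi$ and the embedding constants. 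That is harmless for the theorem as stated.
\end{itemize}
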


\begin{proof}
Since Gaussian kernel, $\tilde{k}_\epsilon$, is a bounded measureable kernel on $\mathcal{M}$, then by Proposition~\ref{prop:Gaussapprox}, it is clear that $k_{\epsilon,N}$ is a bounded measurable kernel on $\mathcal{M}$. By the spectral convergence result (Proposition~\ref{lem:spectral}), we have,
$
|\nu_j - \nu_j^{\epsilon,N}| = O(\beta),$ where $\beta = \left(\frac{\log N}{N}\right)^{\frac{3}{8d+26}}$, $\nu_j^{\epsilon,N} = \frac{1}{\epsilon} \log(\lambda_j^{\epsilon,N})$ with $\lambda_{j,N}$ denotes eigenvalues of the corresponding integral operator, $T_{\epsilon,N}$ defined in Section~\ref{sec21}. Here $\nu_j$ denotes the $j$th eigenvalues of the negative definite Laplace-Beltrami operator which scales $\nu_j\propto -j^{\frac{2}{d}}$ (see \cite{colbois2015eigenvalues}). Denote $\nu_j(T_k) = \frac{1}{\epsilon}\log(\lambda_j(T_k))$. From \cite{von2008consistency}, $\nu_j(T_k)  - \nu_{j}^{\epsilon,N} = o(N)$, as $N\to \infty$. 
This implies that, 
\[\lambda_{j}(T_k) = e^{\epsilon\nu_j(T_k)}= e^{\epsilon\left(\nu_j(T_k)-  \nu_j^{\epsilon,N}+\nu_j^{\epsilon,N}- \nu_j +\nu_j\right)} = e^{\epsilon(o(N)+O(\beta) +\nu_j )}=ae^{\epsilon \nu_j} = a e^{-\epsilon j^{2/d}},\]
for some constant $a=e^{\epsilon (o(N)+O(\beta))}>0$ that is finite, which is faster than the algebraic decay rate in the assumption in the Proposition~\ref{prop:krr_errorbound}. 

Take $f\in \mathcal{H}_{\epsilon,N}$. By Lemma~\ref{lem:isoembedding}, $\|f\|_{\mathcal{H}_{\epsilon,N}} = \left\|U^{-1}f\right\|_{\mathcal{H}_{\tilde{k}_\epsilon}}$, where $U^{-1}f := \frac{f}{\phi}\in \mathcal{H}_{\tilde{k}_\epsilon}$. Since $\phi$ is uniformly bounded, we have
\BEA
\|f\|_\infty = \|UU^{-1}f\|_\infty \leq \|\phi\|_{\infty}\|U^{-1}f\|_\infty \leq \phi^+ \|U^{-1}f\|_\infty.\label{eq:proofeq1}
\EEA
By \eqref{lem:embedding}, we have $\mathcal H_{\tilde{k}_{\epsilon}}\hookrightarrow \mathcal H_{k_M}$, where the RKHS $\mathcal{H}_{k_M}$ has a norm equivalent to $H^{s-\frac{n-d}{2}}(\mathcal M)$, for any $s>\frac{n-d}{2}$.  

This means any function in $\mathcal H_{\tilde{k}_{\epsilon}}$ satisfies the inequality in \eqref{embeddingcond}. Therefore,
\BEA
\|U^{-1}f\|_{\infty} \leq C \|U^{-1}f\|_{\mathcal{H}_{\tilde{k}_\epsilon}}^p \|U^{-1}f\|_{L^2(\mathcal{M})}^{1-p} \leq C\|f\|_{\mathcal{H}_{\epsilon,N}}^p \frac{1}{\|\phi\|^{1-p}_\infty} \|f\|_{L^2(\mathcal{M})}^{1-p} \leq \frac{C}{(\phi^-)^{1-p}}\|f\|_{\mathcal{H}_{\epsilon,N}}^p  \|f\|_{L^2(\mathcal{M})}^{1-p},\label{eq:proofeq2}
\EEA
where we have used the isometry and the boundedness of $1/\phi$ in the last inequality. Combining \eqref{eq:proofeq1} and \eqref{eq:proofeq2}, it is clear that 
the inequality in \eqref{embeddingcond} is satisfied for $0<p= \frac{d}{2(s-\frac{n-d}{2})} <1$, for any $s > \frac{n}{2}$ for all functions in the RKHS $\mathcal{H}_{\epsilon,N}$. Therefore, the result in Proposition~\ref{prop:krr_errorbound} for the DM kernel is valid.
\end{proof}

\section{Numerical results} \label{sec5}

In this section, we first numerically verify the convergence of heat kernel estimation. Subsequently, we demonstrate how the normalization factor in the DM kernel improves the regression estimate relative to the Gaussian kernel on manifolds with boundary. We also show the performance comparison of the DMKRR and Gaussian KRR in approximating oscillatory targets and manifolds of high co-dimension. 

\subsection{Convergence of Heat Kernel Estimation}\label{sec:num_heat}

The convergence study of heat kernel using DM is performed on two manifolds: circle and flat torus.
For each manifold, the positive-time empirical diffusion kernel $H_{\epsilon,N}(\bx,\by;s)$, as defined in \eqref{eq:empiricalheatkernel}, is constructed using $N$ sample points, and then computed at a combination of source locations $\bx$ and target locations $\by$ for a fixed diffusion time $s$.

For the convergence study, a series of combinations of $\epsilon$ and $N$ is considered, where $s$ is an integer multiple of $\epsilon$.  For each $\epsilon$ and $N$, the heat kernel is computed as follows.  If $\epsilon=s$, then
$
H_{\epsilon,N}(\bx,\by;s) \approx \mathbf{k}_{\epsilon,N}(\bx,\by).
$
If $\epsilon < s$, then the Gram matrix is formed on the $N$ samples $\mathbf{K}_{\epsilon,N}$, and
$
H_{\epsilon,N}(\bx,\by;s) \approx \mathbf{k}_{\epsilon,N}(\bx,\mathbf{X}) \mathbf{K}_{\epsilon,N}^{p-2} \mathbf{k}_{\epsilon,N}(\mathbf{X},\by),
$
where $p=s/\epsilon$, and $\mathbf{k}_{\epsilon,N}(\bx,\mathbf{X})$ denotes the kernel section evaluated at $\bx$ against the $N$ samples, similarly for $\mathbf{k}_{\epsilon,N}(\mathbf{X},\by)$.

Subsequently, the maximum absolute error (MAE) is computed over the chosen set of sources and tests,
$$
E_\infty = \sup_{\bx,\by} \left| H_{\epsilon,N}(\bx,\by;s) - H(\bx,\by;s) \right|.
$$


The \textbf{unit circle} $\mathbb{S}^1$ is parameterized by the intrinsic coordinates $\theta$ through
$
\bx(\theta) = (\cos\theta, \sin\theta), \theta\in[0,2\pi).
$
The heat kernel on $\mathbb{S}^1$ is
$$
k_{\mathbb{S}^1}(\theta,\theta_0;t)
=
\frac{1}{\sqrt{4\pi t}}
\sum_{m\in\mathbb{Z}}
\exp\!\left(-\frac{(\theta-\theta_0+2\pi m)^2}{4t}\right),
\qquad \theta,\theta_0\in[0,2\pi).
$$
Numerically, the sum is truncated to $m=-5,-4,\cdots,5$, so that the truncation is practically zero.

The \textbf{flat torus} $\mathbb{T}^2 = \mathbb{S}^1\times \mathbb{S}^1$ is parameterized by the intrinsic coordinates $(\theta_1,\theta_2)$ through
$
\bx(\theta_1,\theta_2) = (\cos\theta_1, \sin\theta_1, \cos\theta_2, \sin\theta_2),\theta_1\in[0,2\pi), \theta_2\in[0,2\pi).
$
The heat kernel on $\mathbb{T}^2$ is a composition of the unit circle kernels,
$$
k_{\mathbb{T}^2}([\theta_1,\theta_2],[\theta_{0,1},\theta_{0,2}];t)
=
k_{\mathbb{S}^1}(\theta_1,\theta_{0,1};t)
k_{\mathbb{S}^1}(\theta_2,\theta_{0,2};t).
$$
Numerically, each $k_{\mathbb{S}^1}$ is truncated the same as in the unit circle case.

Due to its scaling, the DM kernel is numerically compared with the heat kernel multiplied by $\mathrm{Vol}(\mathcal{M})$.  For the cases considered,
$
\mathrm{Vol}(\mathbb S^1)=2\pi,
\mathrm{Vol}(\mathbb T^2)=4\pi^2.
$
In our numerical test below, the specific parameters for each manifold are listed below: {\bf Circle:} Tested on 4096 uniformly sampled points with 4 randomly selected sources.  Diffusion time $s=0.01$. {\bf Flat torus:} Tested on 64$\times$64 uniformly sampled points with 5 randomly selected sources.  Diffusion time $t=0.04$.
For each $\epsilon$ and $N$, 8 trials are done and the median error is reported.  Each trial uses a scrambled Sobol sequence to generate samples, which reduces variance in the errors of the 8 trials.

The convergence results are shown in Fig.~\ref{fig:heat_conv}.
Panels (a) and (b) show that, for a fixed $\epsilon$, the heat kernel error decreases as $N$ increases and eventually plateau at a sufficiently large $N$; this is when the $\epsilon^{1/4}$ term dominates the error in Theorem \ref{thm1}.
Panels (c) and (d) show the convergence of errors at the largest $N$ used in each case.  A clear convergence w.r.t. $\epsilon$ is observed, and the convergence rate is faster than the theoretical rate of $1/4$ in Theorem \ref{thm1}.

More details of the heat kernel study are provided in Appendix \ref{app:num_heat}, including the illustration of the heat kernels, detailed convergence plots involving both $\epsilon$ and $N$, and an additional case of heat kernel on disk.

\begin{figure}[htbp]
    \centering
    \includegraphics[width=.8\textwidth]{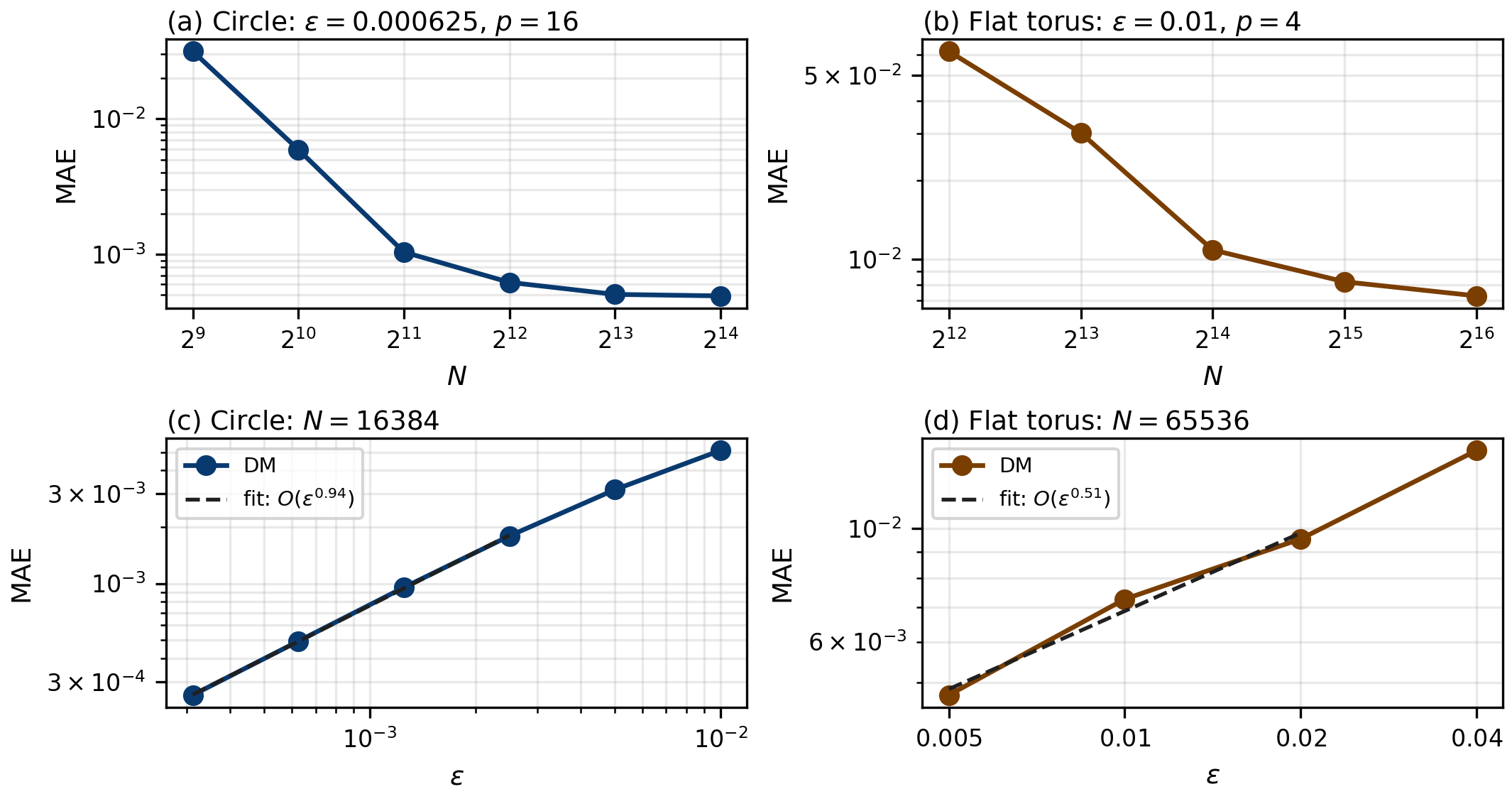}
    \caption{Convergence of heat kernels on circle and flat torus.}
    \label{fig:heat_conv}
\end{figure}

\subsection{Convergence of Kernel Ridge Regression}

In Section~\ref{sec5.2.1}, we compare the performance DM kernel to the Gaussian RBF kernel as a regressor for various types of functions of relevant classes on semicircle and full circle. The aim is to discover the type of functions for which DMKRR is effective. In Section~\ref{sec5.2.2}, we demonstrate the numerical convergence of KRR with these kernels. 

\subsubsection{Semicircle and full circle results}\label{sec5.2.1}

  Two 1D manifolds are considered: (1) semicircle: $\mathbb{S}^{1/2}=\{(\cos t,\sin t)\ |\ t\in[-\pi/2,\pi/2]\}$, with inner product $\langle g,h\rangle_{\rm semi}= \frac{1}{\pi}\int_{-\pi/2}^{\pi/2}g(t)h(t)\,dt$,
and (2) full circle: $\mathbb{S}^{1}=\{(\cos t,\sin t)\ |\ t\in[0,2\pi)\}$, with inner product $\langle g,h\rangle_{\rm full}=
 \frac{1}{2\pi}\int_0^{2\pi}g(\theta)h(\theta)\,d\theta$.  The inner products induce the $L^2$ norms for the respective manifolds which are used below.

\paragraph{Construction of labels.}
For the semicircle, the labels are a mixture of Laplace--Beltrami (LB) eigenfunctions and RBF integral operator eigenfunctions.  Specifically, consider the subspace of the first four odd Neumann LB eigenfunctions
\[
 W_{\rm LB}=\operatorname{span}\left\{\sqrt2\sin t,\sqrt2\sin3t,
 \sqrt2\sin5t,\sqrt2\sin7t\right\},
\]
and the subspace of the first four even RBF eigenfunctions
\[
 W_{\rm RBF}=\operatorname{span} \left\{ \psi_1(t),\psi_3(t),\psi_5(t),\psi_7(t)\right\},
\]
where $\psi_j$ are eigenfunctions of $\epsilon^{1/2}G_\epsilon$ as defined in \eqref{eq:Ge}
\comment{Gaussian integral eigenproblem,
\[
\int_{\mathbb{S}^{1/2}}\tilde{k}_{\epsilon}((\cos t,\sin t),(\cos s,\sin s))\psi_j(s)ds = \lambda_j \psi_j(t),\quad \|\psi_j\|_{L^2}=1
\]}
with $\epsilon=0.02$, and at the chosen $j$'s the eigenfunction is even, i.e., $\psi_j(-t)=\psi_j(t)$.

Subsequently, 12 functions are randomly sampled from each subspace and normalized to unity in the $L^2$ sense: $u_i\in W_{\rm LB}$ and $v_i\in W_{\rm RBF}$, $i=1,2,\cdots,12$.  For each pair of LB and RBF functions, define a function family,
\[
f_i(s) = \cos(\pi s/2)u_i+\sin(\pi s/2)v_i,\quad s\in[0,1].
\]
By design, $W_{\rm LB}$ is orthogonal to $W_{\rm RBF}$ (see Fig.~\ref{fig:circ_label} in Appendix \ref{app:circ}), so are $u_i$ and $v_i$; hence $\|f_i(s)\|_{L^2}=1$ for $s\in[0,1]$.
Twenty-eight values of $\{s_j\}_{j=1}^{28}$, with $s_1=0$ and $s_{28}=1$, are chosen and fixed, so each family contains 28 labels; this totals $12\times 28=336$ labels for the DM and RBF KRR models.

For the full circle, because the Gaussian kernel is translationally invariant over the circle, one can show that the eigenfunctions of the Gaussian integral eigenproblem are sinusoidal functions, which coincides with the LB eigenfunctions on the full circle.  As a result, the KRR labels for full circle are randomly sampled from a LB subspace consisting of the first four periodic sine/cosine pairs, and normalized to unity in the $L^2$ senses. The illustration of the labels are provided in Fig.~\ref{fig:circ_label} in Appendix \ref{app:circ}.

\comment{
\paragraph{Procedure for KRR fitting.}
The KRR models using DM and RBF kernels are independently tuned for every label using the following procedure.
First, to avoid the variability in random sampling, deterministic equispaced samples are used.
The semicircle uses 1024 equispaced training samples, including endpoints $t=0$ and $t=1$, and 1023 midpoints for validation.
The full circle uses 13 periodic training samples and 13 half-shifted validation samples.
Here, due to the periodicity and translation invariance, the full circle requires much fewer samples to achieve the same error level as in semicircle.
Given the samples, the DM and RBF kernels are evaluated using the ambient coordinates.
Second, the hyperparameters, bandwidth and ridge, are tuned by the average error evaluated at the validation samples.
The optimal hyperparameters are first searched on a $9\times 9$ log-spaced grid over $\epsilon\in[10^{-4},10^2]$ and $\eta\in[10^{-16},10^1]$, and the best candidate is further refined by a Nelder--Mead algorithm.
In addition, for RBF KRR, the hyperparameters are also searched at a fixed $\epsilon=0.02$ and ridge parameter $\eta$ over $[10^{-16}, 10^{-8}]$; this protects against a narrow ridge optimum at the same bandwidth used to construct the RBF target space.
Third, after a KRR model $\hat{f}$ is tuned for a label $f$, the KRR error is quantified using the $L^2$ norm $\|f-\hat{f}\|_{L^2}$.
Numerically, the integral for semicircle $L^2$ norm uses a composite 16-point Gauss–Legendre rule on each of its 1023
training intervals (16368 nodes total), and the integral for full-circle $L^2$ norm uses the 8192-point periodic trapezoidal rule.  The choices of quadrature points ensure machine precision in both cases.
}

\paragraph{Comparison of KRR errors.}
Figure~\ref{fig:circ_krr}(a) shows the KRR errors of the semicircle families.  The $x$ axis uses a logarithmic resolution near $s=1$, so as to show the crossover of the DM and RBF errors.  The solid lines show the medium $L^2$ error across the 12 families, while the shaded regions are the empirical 10th–90th percentile bands.  The results show a consistent trend across the 12 families.  At $s=0$, when the labels are in $W_{\rm LB}$, DM is nearly two order of magnitude better than RBF, and DM remains significantly better than RBF for most values of $s<1$.  The RBF becomes advantageous only near $s=1$, where the labels consists of nearly only RBF eigenfunctions. Figure~\ref{fig:circ_krr}(b) shows the KRR errors of the full circle labels.  While the labels are linear combinations of LB eigenfunctions on the full circle, this time RBF shows consistent advantage over DM.

\begin{figure}[htbp]
    \centering
    \includegraphics[width=.8\textwidth]{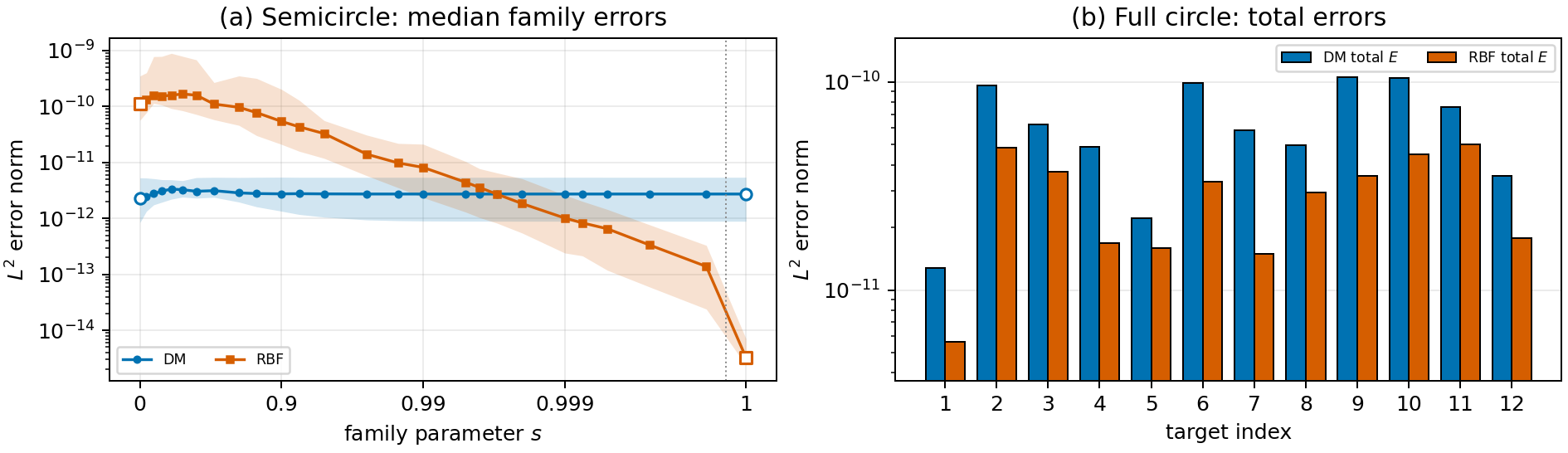}
    \caption{Comparison of DM and RBF KRR errors.}
    \label{fig:circ_krr}
\end{figure}

\paragraph{Subspace alignment analysis.} To understand the differences between DM and RBF in the KRR results, the subspaces containing the labels are compared with the subspaces spanned by the first eight Nystr\"om-extended eigenfunctions of the tuned KRR operators.
The subspaces are compared using the largest principal angle between two subspaces.  The detailed definitions and additional results are provided in Appendix \ref{app:circ}.

First, consider the semicircle cases.  For $s=0$, the labels are in the 4-dimensional LB subspace $W_{\rm LB}$.  The comparison of $W_{\rm LB}$ and each KRR subspace produces one angle, and the angle distributions associated with the DM and RBF models are shown in Fig.~\ref{fig:circ_angles}(a), with labels ``DM-LB'' and ``RBF-LB'' respectively, denoting the kernel being used in KRR - the space of the label function.
Similarly, the subspace comparison is done for the $s=1$, where the labels are in the 4-dimensional RBF subspace $W_{\rm RBF}$.  The angle distributions are labeled ``DM-RBF'' and ``RBF-RBF'' respectively in Fig.~\ref{fig:circ_angles}(a).
Across the semicircle tuned models, the median largest angles are $3.578$ and $22.54$ degrees against $W_{\rm LB}$, and $18.58$ and $0.1763$ degrees against $W_{\rm RBF}$, for DM and RBF respectively.
The small angle in ``DM-LB'' case is attributed to the capability of the DM kernel to approximate the LB heat kernel, while the small angle in ``RBF-RBF'' case is because that the RBF KRR operator is directly a finite-sample approximation of the RBF integral operator. Similar trend is observed with more KRR modes.
The subspace alignment clearly correlates with the KRR performance, that when the KRR subspace aligns well with the label space, the KRR error tends to be low.

The subspace alignment in the semicircle case is further illustrated for $W_{\rm LB}$ in Fig.~\ref{fig:circ_semi_lb}.  The dashed lines show the basis of $W_{\rm LB}$, and the solid lines are the linear combinations of the eight kernel eigenfunctions, best aligned to the LB basis.
Clearly, the LB basis are approximated well by DM eigenfunctions, but not by the RBF ones.  Again, the better approximation of the LB basis is well correlated to lower KRR error.  The basis alignment plots for other cases, showing the same trends, are provided in Appendix \ref{app:circ}.

The same subspace alignment analysis is applied to the full circle case.  The label space, a 8-dimensional LB subspace, is compared to the tuned KRR subspaces, and the angle distributions are shown in Fig.~\ref{fig:circ_angles}(b).  The medians are $2.091\times10^{-6}$ and $1.909\times10^{-6}$ degrees for DM and RBF, respectively.
The near perfect subspace alignment with the label space suggests that both DM and RBF are suitable for the LB labels on full circle.  Furthermore, the slightly lower angles for RBF do correlate to the smaller RBF KRR errors than the DM errors.

Overall, the 1D cases of semicircle and full circle show three findings.  First, DM and RBF kernels are suitable for fitting labels that align the best with their respective eigenfunctions.  Second, when both the kernels align well with the label space, the RBF kernel tends to perform better. Third, on closed manifolds, due to translational invariance, the eigenfunctions of RBF integral operator coincide with the eigenfunctions of LB eigenfunctions on the manifold. In this case, the DM kernel does not have any advantage over the RBF kernel in the KRR fitting. With the above findings, we consider manifolds with boundary in the rest of the numerical study.

\begin{figure}[htbp]
    \centering
    \includegraphics[width=0.8\textwidth]{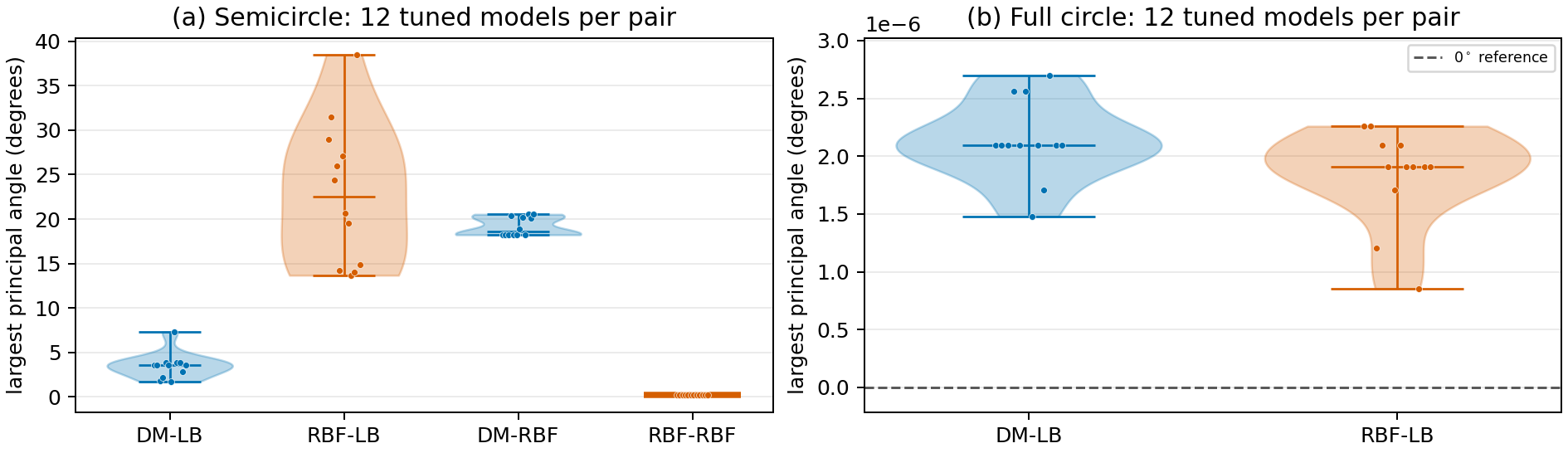}
    \caption{Distributions of subspace angles.}
    \label{fig:circ_angles}
\end{figure}

\begin{figure}[htbp]
    \centering
    \includegraphics[width=0.8\textwidth]{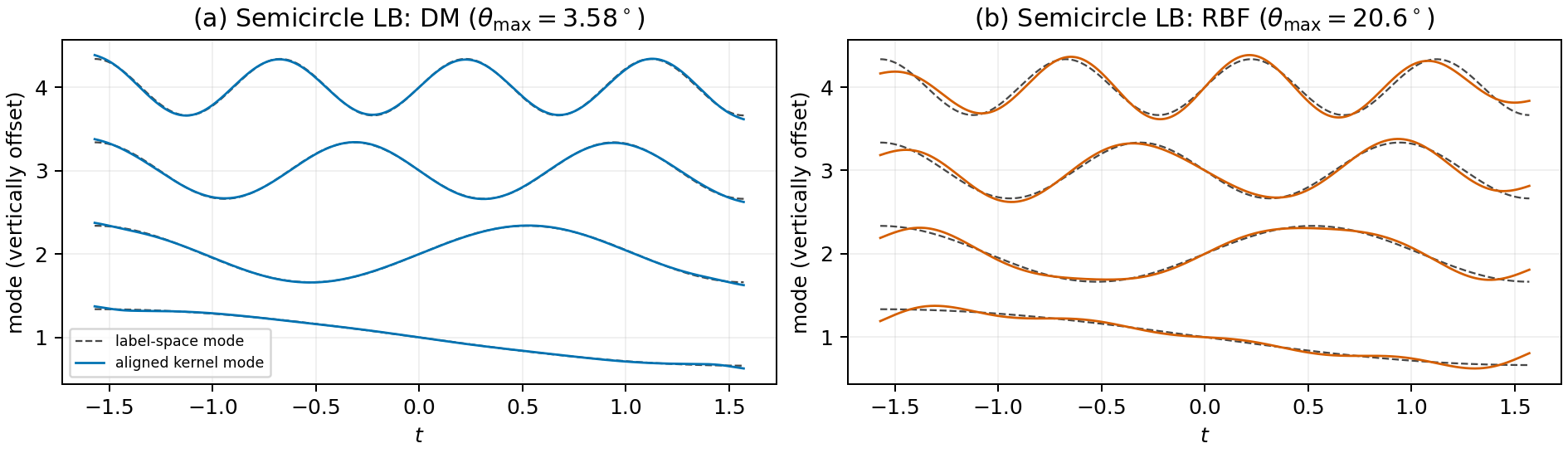}
    \caption{Subspace alignment for the LB basis.}
    \label{fig:circ_semi_lb}
\end{figure}

\subsubsection{The semi torus results}\label{sec5.2.2}

The convergence of KRR with DM and RBF kernels are compared on a semi-torus. In this section, we show only the results with Neumann eigenfunctions corresponding to
$(m,j)\in\left\{(1,0),(3,1),(6,3)\right\}$, representing low-, moderate-, and high-frequency targets  (Fig.~\ref{fig:torus_neumann})
as labels.
\comment{Three families of regression targets are considered for the semi-torus:
\begin{enumerate}
    \item Neumann Laplace--Beltrami eigenfunctions corresponding to
    \[
    (m,j)\in\{(1,0),(3,1),(6,3)\},
    \]
    representing low-, moderate-, and high-frequency targets
    (Fig.~\ref{fig:torus_neumann});

    \item Eigenfunctions of the Euclidean Laplacian in $\mathbb R^3$
    restricted to the semi-torus, with wavevectors
    \[
    \bk\in\{(1,0,0),(2,2,1),(4,3,3)\}
    \]
    (Fig.~\ref{fig:torus_fourier});

    \item One Dirichlet Laplace--Beltrami eigenfunction fixed at
    $(m,j)=(3,1)$ and represented through nonlinear embeddings in
    \[
    n\in\{3,7,11,15\}
    \]
    ambient dimensions (Fig.~\ref{fig:torus_ambient}).
\end{enumerate}
}
The precise definitions and numerical construction of these targets are given in Appendix~\ref{app:torus}.  All targets are normalized using a fixed surface-weighted root-mean-square factor, so that the reported RMSE values are comparable across target functions and experimental settings.  Appendix~\ref{app:torus} also provides details in KRR fitting and validation.

In Fig.~\ref{fig:torus_neumann}, we show the RMSEs as functions of $N$. Compare to RBF KRR, the DMKRR generally achieves lower test error and a more favorable error as $N$ increases. For the high-frequency label, the smallest training size exhibits substantial variability, indicating that $N=1024$ does not consistently resolve the oscillatory target. This variability decreases as the number of training samples
increases. We observe similar trends for labels drawn from an ambient Laplacian restricted on the semi-torus (see Appendix~\ref{app:torus}).

Finally, Fig.~\ref{fig:torus_ambient} presents the RMSE convergence for two-dimensional semitorus embedded in four ambient dimensions, $n$. The DMKRR generally achieves lower RMSE than RBF KRR and exhibits a faster decay of the RMSE with increasing $N$. However, the performance gap between the two methods tends to narrow as the ambient dimension increases. For the largest case shown, $n=15$, the two methods show comparable RMSE at smaller training sizes, whereas a clear advantage of DMKRR emerges from $N=2^{13}$ onward.  This result indicates that, when sufficiently many training samples are available, DMKRR remains effective even in the case of increasingly high co-dimension. 

\begin{figure}[htbp]
    \centering
    \includegraphics[width=.8\textwidth]{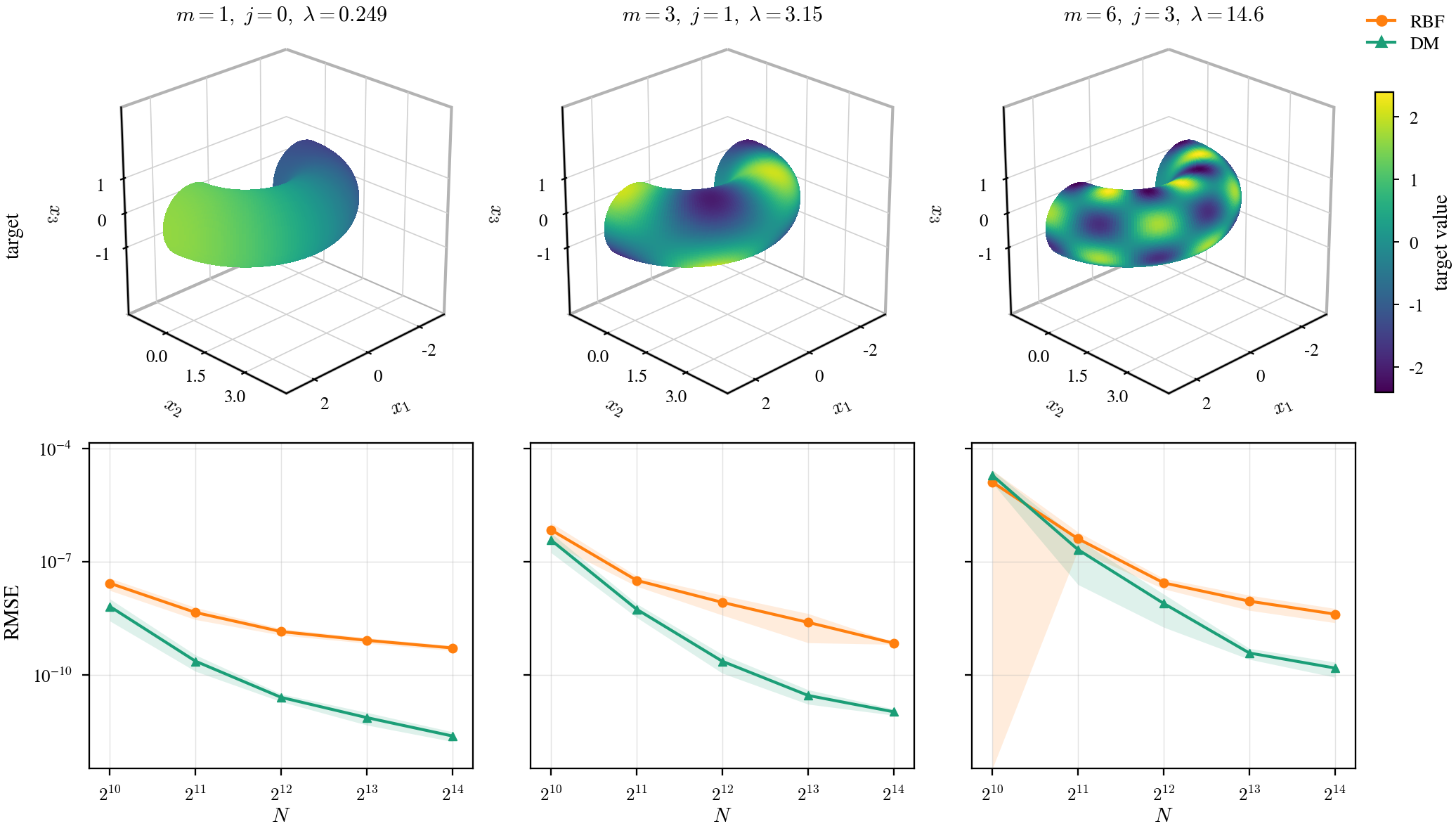}
    \caption{LB eigenfunctions with Neumann BC on semi-torus.}
    \label{fig:torus_neumann}
\end{figure}


\begin{figure}[htbp]
    \centering
    \includegraphics[width=\textwidth]{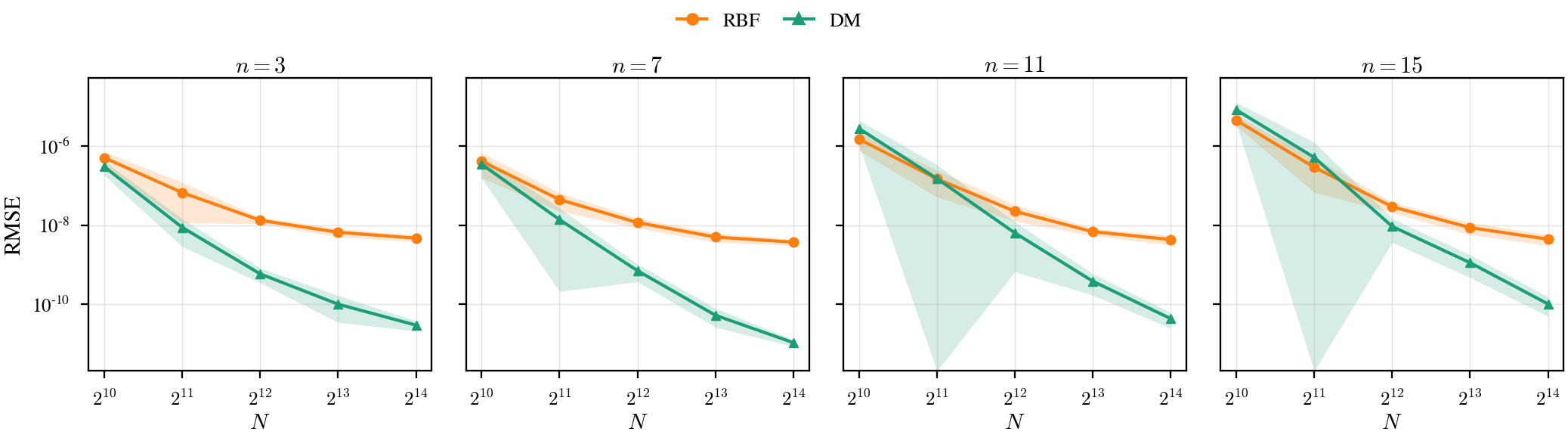}
    \caption{LB eigenfunctions with Dirichlet BC on semi-torus, embedded in higher ambient dimensions.}
    \label{fig:torus_ambient}
\end{figure}

\comment{
\begin{figure}[htbp]
    \centering
    \includegraphics[width=\textwidth]{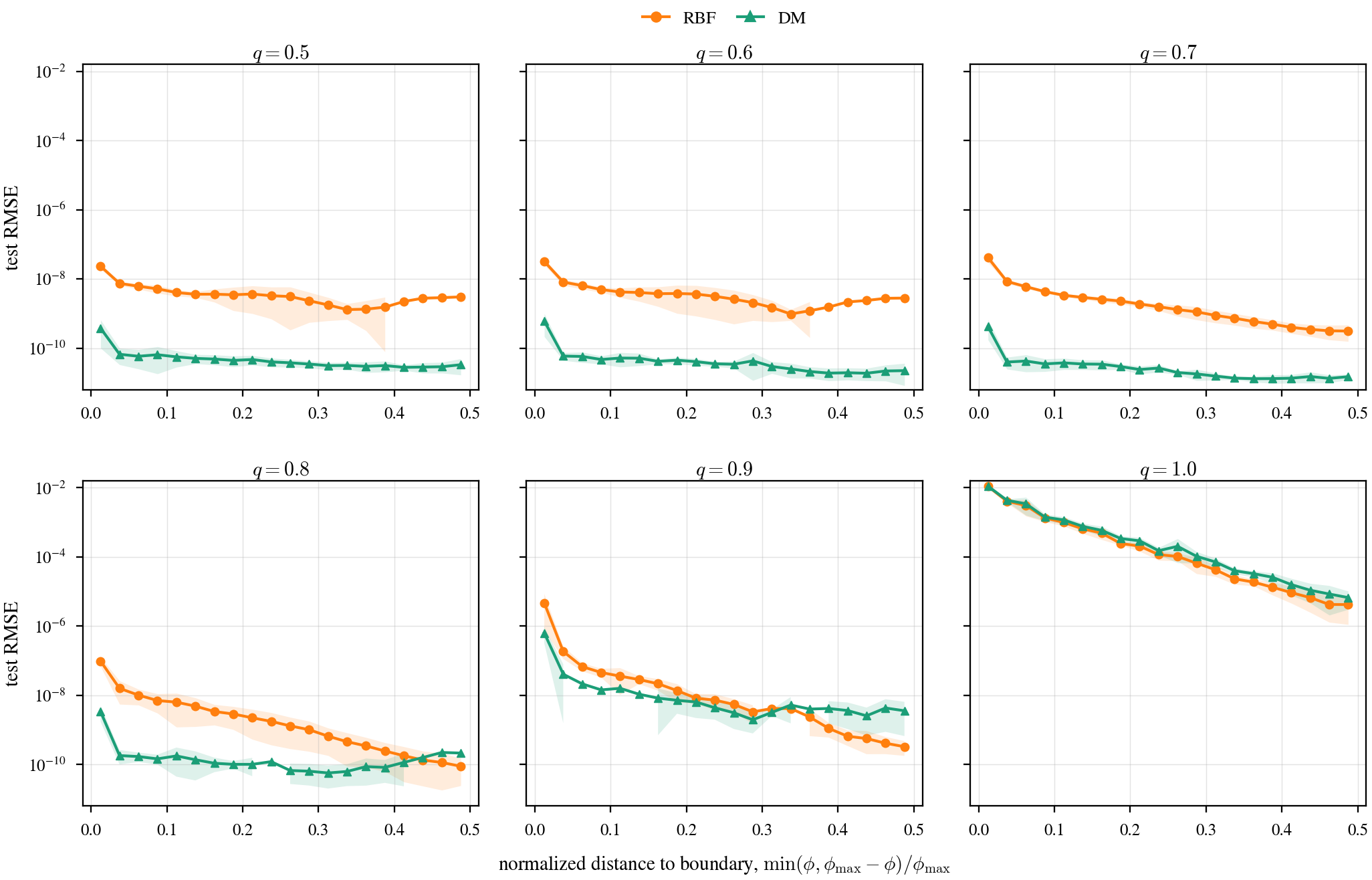}
    \caption{Test RMSE versus normalized distance from the boundary at \(N=2^{13}\).}
    \label{fig:partialtorus_dist}
\end{figure}
\begin{figure}[htbp]
    \centering
    \includegraphics[width=\textwidth]{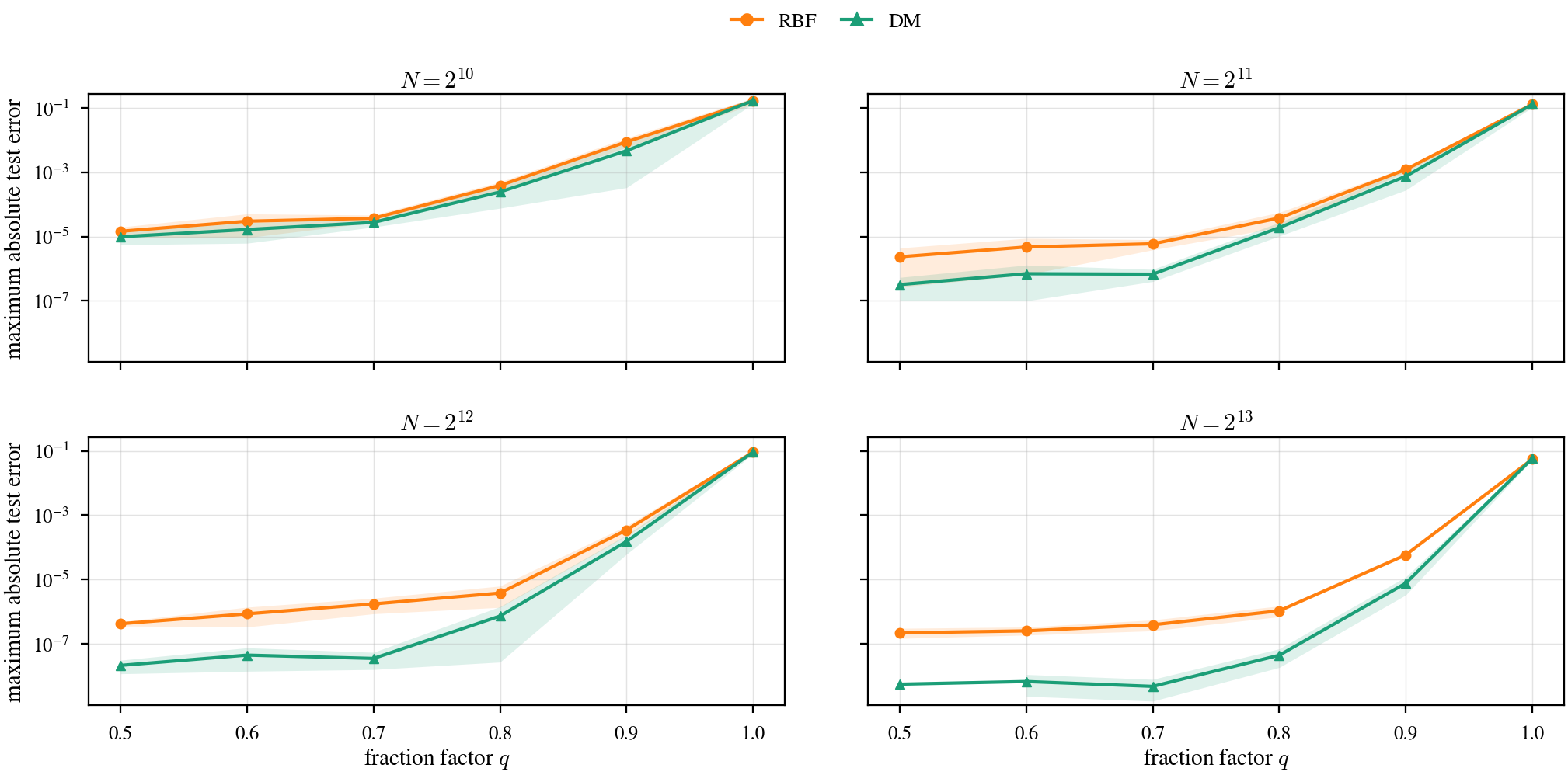}
    \caption{Maximum absolute error versus the fraction factor \(q\) for each $N$ considered.}
    \label{fig:partialtorus_maxerror_q}
\end{figure}
}



\section{Summary} \label{sec6}

This paper provides a theoretical foundation for kernel ridge regression based on the data-driven diffusion maps (DM) kernel. We show that, under suitable assumptions, the appropriately scaled DM kernel converges to the heat kernel on the underlying manifold and that its associated RKHS inherits desirable approximation and generalization properties through its connection to Gaussian, Matérn, and Sobolev spaces. These results justify the use of the DM kernel within the established framework of kernel methods and are supported by numerical experiments demonstrating its advantages over the Gaussian kernel, particularly for learning functions on manifolds with boundary and across a range of frequencies and co-dimensions for large enough data.

Several important questions remain open. Although the normalization factor in the DM kernel consistently improves regression performance in problems involving manifolds with boundary, the theoretical mechanism underlying this improvement is not yet understood. In addition, while the DM kernel has shown promising empirical performance for learning solution operators of dynamical systems with clean data \cite{song2025learning} and noisy data \cite{kreider2026learning}, a rigorous theoretical analysis of its effectiveness in this setting remains an important direction for future work.

\paragraph{\bf Acknowledgments.} This research is partially supported by the National Science Foundation grants DMS-2505605. We thank Alex Townsend for insightful discussions and comments on the manuscript.

\comment{
\begin{remark}
By Remark~1(b)-(c), this means that \eqref{embeddingcond} is satisfied for any $f\in \mathcal{H}_{\psi_{\epsilon,N}}$, with $p=d/m$ and $m=s-\frac{n-d}{2}$ and Proposition~\ref{prop:krr_errorbound} is valid for KRR on $\mathcal{H}_{\psi_{\epsilon,N}}$ corresponds to $\psi_{\epsilon,N} = k_{\epsilon,N}|_{\mathcal{M}\times\mathcal{M}}$. 
\end{remark}

 TODO: 
$\mathcal{R}(f_\lambda2) - \mathcal{R}(f_\lambda) = \mathbb{E}_{P}[(y-f_{\lambda2}^2)]-\mathbb{E}_{P}[(y-f_{\lambda}^2)] \leq \mathbb{E}_{P}[(y-f_{\lambda2}^2)]- \mathbb{E}_{X}[(y-f_{\lambda2}^2)] +\mathbb{E}_{X}[(y-f_{\lambda2}^2)] - \mathbb{E}_{X}[(y-f_{\lambda}^2)]+ \mathbb{E}_{X}[(y-f_{\lambda}^2)] - \mathbb{E}_{P}[(y-f_{\lambda}^2)]$

\begin{remark}
I also don't see a way around this. See the following derivation.
\end{remark}

Set
\[
k_1:=\epsilon^{d/2}k_{\epsilon,N},\qquad
k_0:=\alpha\tilde k_\epsilon,\qquad
\alpha:=\frac{\mathrm{Vol}(\mathcal M)}{m_0},\qquad
\delta_{N,\epsilon}:=C\left(\epsilon+\sqrt{\frac{\log N}{N\epsilon^{d/2}}}\right).
\]
Then the displayed estimate is as follows. With probability higher than $1-\frac{3}{N}$,
\[
\sup_{x,y}|k_1- k_0|\le  \delta_{N,\epsilon},
\]

Let 
$$f_1(\bx) =\sum_{i=1}^Nk_1(x,x_i)a_i,\quad \text{and} \quad f_0(\bx) = \sum_{i=1}^Nk_0(x,x_i)b_i.$$

\begin{lem}
With the setup above, with probability higher than $1-\frac{3}{N}$, 
\[
|\mathcal{R}(\hat{f}_1) - \mathcal{R}(\hat{f}_0) | \leq \frac{C}{\sqrt{\lambda_{N}(T_{k_0})}} \left(\epsilon+\sqrt{\frac{\log N}{N\epsilon^{d/2}}}\right),
\]
where $T_{k_0}:L^2(M) \to L^2(M)$ is an integral operator defined as \[
T_{k_0} f(\bx) = \int_{M} k_0(\bx,\by) f(\by)dV(\by),
\]
and $C = C'M \left(1+ \alpha \text{cond}(K_1+\lambda I) \right)  \|f_0\|_{\mathcal{H}_{k_0}}$ for some constant $C' >0$.
\end{lem}

\begin{proof}

First, we note that,
\begin{eqnarray}
|\mathcal{R}(\hat{f}_1) - \mathcal{R}(\hat{f}_0) | &\leq&  \int_M \left| (y-\hat{f}_1(\bx))^2-(y-\hat{f}_0(\bx))^2  \right|dP(\bx,\by) \notag \\ &=& \int_M \left| (\hat{f}_1(\bx)-\hat{f}_0(\bx))(\hat{f}_1(\bx)+\hat{f}_0(\bx) -2y)  \right|dP(\bx,\by)  \notag\\ &\leq& 4M \|\hat{f}_1-\hat{f}_0\|_\infty \leq 4M \|f_1-f_0\|_\infty,\label{eq:riskLips}
\end{eqnarray}
where we used the fact that $|\hat{f}_1(\bx) - y|,|\hat{f}_1(\bx) - y| \leq 2M$ and the definition of the clipping operator.

Next, 
\begin{align}
|f_1(\bx)-f_0(\bx)| 
&\leq \sum_{i=1}^N|k_1(x,x_i)||a_i-b_i| + \sum_{i=1}^N|k_1(x,x_i)-k_0(x,x_i)||b_i|  \notag\\
&\leq \max_{i=1,\ldots,N}|k_1(x,x_i)|\sqrt{N} \sqrt{\sum_{i=1}^N|a_i-b_i|^2} + \max_{i=1,\ldots,N}|k_1(x,x_i)-k_0(x,x_i)| \sum_{i=1}^N|b_i|.\label{eq:error_f}
\end{align}
By the reproducing property of the RKHS $\mathcal{H}_0$ with kernel $k_0$, we have
\[
\|f_0\|_{\mathcal{H}_0}^2 = \sum_{i,j=1}^N b_i b_j k_0(x_i, x_j) =  \mathbf{b}^T K_0 \mathbf{b}.
\]
This gives the bound
\[
\sum_{i=1}^N b_i^2 \le \frac{ \|f_0\|_{\mathcal{H}_0}^2}{\lambda_{\min}(K_0)}.
\]
Also, let $\lambda_j(T_{k_0})$ be the $j$th eigenvalues of the integral operator
$T_{k_0}$, we know that 
\[
\frac{1}{N} \lambda_j(K_0) \to \lambda_j(T_{k_0}), 
\]
as $N\to \infty$. This means $\lambda_j(K_0)\approx N \lambda_j(T_{k_0})$, and we have
\begin{eqnarray}
\sum_{i=1}^N |b_i| \leq \sqrt{N} \sqrt{\sum_{i=1}^N b_i^2} \le \frac{ \|f_0\|_{\mathcal{H}_0}}{\sqrt{\lambda_{N}(T_{k_0})}}\label{eq:sum_a1}
\end{eqnarray}

The KRR solutions are,
\[
(K_1+\lambda I)\mathbf{a} = y, \quad (K_0+\lambda I)\mathbf{b} = y.
\]
Since $K_1+\lambda I$ is invertible, $\frac{\|K_1-K_0\|_2}{\|K_0+\lambda I\|_2} < \frac{1}{\text{cond}(K_0+\lambda I)}$, then by perturbation theory of linear system,
\[
\sqrt{N}\|\mathbf{a}-\mathbf{b}\|_2 \leq \|K_1-K_0\|_2 \text{cond}(K_0+\lambda I) \sqrt{N} \|\mathbf{b}\|_2 \leq \delta_{N,\epsilon} \text{cond}(K_0+\lambda I) \frac{ \|f_0\|_{\mathcal{H}_0}}{\sqrt{\lambda_{N}(T_{k_0})}}.
\]
Together, we conclude that
\[
|f_1(\bx)-f_0(\bx)| \leq \left(1+ \|k_1\|_\infty \text{cond}(K_0+\lambda I) \right) \frac{ \|f_0\|_{\mathcal{H}_{k_0}}}{\sqrt{\lambda_{N}(T_{k_0})}} \delta_{N,\epsilon.}
\]
Inserting this to \eqref{eq:riskLips}, the proof is complete.
\end{proof}

\begin{remark}
We note that $\lambda_N \propto \exp(-\epsilon N^{2/d})$, since $\epsilon \gg \left(\frac{\log N}{N}\right)^{2/d}$, we have $\lambda_N \gg \exp(-(\log N)^{2/d})$ which is very small too.    
\end{remark}
}


\begin{thebibliography}{10}

\bibitem{adams2003sobolev}
Robert~A Adams and John~JF Fournier.
\newblock {\em Sobolev spaces}, volume 140.
\newblock Elsevier, 2003.

\bibitem{BachLanckrietJordan2004}
Francis~R. Bach, Gert R.~G. Lanckriet, and Michael~I. Jordan.
\newblock Multiple kernel learning, conic duality, and the {SMO} algorithm.
\newblock In {\em Proceedings of the 21st International Conference on Machine
  Learning (ICML)}, page~41, 2004.

\bibitem{bennett1988interpolation}
Colin Bennett and Robert~C Sharpley.
\newblock {\em Interpolation of operators}, volume 129.
\newblock Academic press, 1988.

\bibitem{calder2022lipschitz}
Jeff Calder, Nicholas Trillos, and Marta Lewicka.
\newblock {Lipschitz regularity of graph {Laplacians} on random data clouds}.
\newblock {\em SIAM Journal on Mathematical Analysis}, 54(1):1169--1222, 2022.

\bibitem{calder2022improved}
Jeff Calder and Nicolas~Garcia Trillos.
\newblock Improved spectral convergence rates for graph laplacians on
  $\varepsilon$-graphs and k-nn graphs.
\newblock {\em Applied and Computational Harmonic Analysis}, 60:123--175, 2022.

\bibitem{christmann2008support}
Andreas Christmann and Ingo Steinwart.
\newblock {\em Support vector machines}.
\newblock Springer, 2008.

\bibitem{coifman2006diffusion}
Ronald~R Coifman and St{\'e}phane Lafon.
\newblock Diffusion maps.
\newblock {\em Applied and computational harmonic analysis}, 21(1):5--30, 2006.

\bibitem{colbois2015eigenvalues}
Bruno Colbois, Ahmad El~Soufi, and Alessandro Savo.
\newblock Eigenvalues of the laplacian on a compact manifold with density.
\newblock {\em Communications in Analysis and Geometry}, 23(3):639--670, 2015.

\bibitem{conway2019course}
John~B Conway.
\newblock {\em A course in functional analysis}.
\newblock Springer, 2019.

\bibitem{debnath2005introduction}
Lokenath Debnath and Piotr Mikusinski.
\newblock {\em {Introduction to Hilbert spaces with applications}}.
\newblock Elsevier, 2005.

\bibitem{dunson2021spectral}
David~B Dunson, Hau-Tieng Wu, and Nan Wu.
\newblock Spectral convergence of graph {Laplacian} and heat kernel
  reconstruction in {$L^\infty$} from random samples.
\newblock {\em Applied and Computational Harmonic Analysis}, 55:282--336, 2021.

\bibitem{fuselier2012scattered}
Edward Fuselier and Grady~B Wright.
\newblock Scattered data interpolation on embedded submanifolds with restricted
  positive definite kernels: Sobolev error estimates.
\newblock {\em SIAM Journal on Numerical Analysis}, 50(3):1753--1776, 2012.

\bibitem{gine2002rates}
Evarist Gin{\'e} and Armelle Guillou.
\newblock Rates of strong uniform consistency for multivariate kernel density
  estimators.
\newblock In {\em Annales de l'Institut Henri Poincare (B) Probability and
  Statistics}, volume~38, pages 907--921. Elsevier, 2002.

\bibitem{gine2016mathematical}
Evarist Gin{\'e} and Richard Nickl.
\newblock {\em Mathematical Foundations of Infinite-Dimensional Statistical
  Models}, volume~40.
\newblock Cambridge University Press, 2016.

\bibitem{harlim2023radial}
John Harlim, Shixiao~Willing Jiang, and John~Wilson Peoples.
\newblock Radial basis approximation of tensor fields on manifolds: from
  operator estimation to manifold learning.
\newblock {\em Journal of Machine Learning Research}, 24(345):1--85, 2023.

\bibitem{jiang2023ghost}
Shixiao~Willing Jiang and John Harlim.
\newblock Ghost point diffusion maps for solving elliptic pdes on manifolds
  with classical boundary conditions.
\newblock {\em Communications on Pure and Applied Mathematics}, 76(2):337--405,
  2023.

\bibitem{kreider2026learning}
Max Kreider, John Harlim, and Daning Huang.
\newblock Learning dynamical systems from noisy data with weak-form kernel
  ridge regression.
\newblock {\em arXiv preprint arXiv:2607.00257}, 2026.

\bibitem{LanckrietEtAl2004}
Gert R.~G. Lanckriet, Nello Cristianini, Peter Bartlett, Laurent~El Ghaoui, and
  Michael~I. Jordan.
\newblock Learning the kernel matrix with semidefinite programming.
\newblock {\em Journal of Machine Learning Research}, 5:27--72, 2004.

\bibitem{OwhadiScovelSuleiman2019}
Houman Owhadi, Clint Scovel, and Gene~Ryan Suleiman.
\newblock Kernel flows: From learning kernels from data into the abyss.
\newblock {\em Journal of Computational Physics}, 389:22--47, 2019.

\bibitem{Peoples2025}
J.~Wilson Peoples and John Harlim.
\newblock Spectral convergence of symmetrized graph {Laplacian} on manifolds
  with boundary.
\newblock {\em Foundations of Data Science}, 8:119--167, 2026.

\bibitem{scholkopfsmola2002}
Bernhard Sch{\"o}lkopf and Alexander~J. Smola.
\newblock {\em Learning with Kernels: Support Vector Machines, Regularization,
  Optimization, and Beyond}.
\newblock MIT Press, Cambridge, MA, 2002.

\bibitem{song2025learning}
Jiwoo Song, Daning Huang, and John Harlim.
\newblock Learning solution operator of dynamical systems with diffusion maps
  kernel ridge regression.
\newblock {\em arXiv preprint arXiv:2512.17203}, 2025.

\bibitem{steinwart2009optimal}
Ingo Steinwart, Don~R Hush, and Clint Scovel.
\newblock Optimal rates for regularized least squares regression.
\newblock In {\em Proceedings of the 22nd Annual Conference on Learning
  Theory}, pages 79--93, 2009.

\bibitem{von2008consistency}
Ulrike Von~Luxburg, Mikhail Belkin, and Olivier Bousquet.
\newblock Consistency of spectral clustering.
\newblock {\em The Annals of Statistics}, pages 555--586, 2008.

\bibitem{wendland2004scattered}
Holger Wendland.
\newblock {\em Scattered data approximation}, volume~17.
\newblock Cambridge university press, 2004.

\end{thebibliography}

\appendix

{
\section{Continuous DM kernel asymptotic}\label{app:A}

In this Appendix, we report the detailed calculation that reveals the scaling of the DM kernel and shows that the normalization approximates the Laplace-Beltrami operator, supporting the definition of Continuous Diffusion Maps and the calculation in Section~\ref{sec31}.

Consider the following asymptotic expansion \cite{coifman2006diffusion}: For $f\in C^3(\mathcal{M})$ where $\mathcal{M}\subset \mathbb{R}^n$ is a $d$-dimensional smooth manifold,
\[
G_\epsilon f(\bx):=\epsilon^{-d/2}\int_{\mathcal M}\tilde k_\epsilon(\bx,\by)f(\by)\,d\mathrm{Vol}(\by)
=m_0 f(\bx)+\epsilon m_2\big(\omega(\bx)f(\bx)+\Delta_g f(\bx)\big)+O(\epsilon^2).
\]

Let $q:\mathcal{M}\to \BR$ denotes the density defined with respect to the volume form, then it is immediately clear that:
\[
q_\epsilon(\bx)=\epsilon^{d/2} G_\epsilon q(\bx)= \epsilon^{d/2}( m_0q(\bx)+\epsilon m_2\big(\omega(\bx)q(\bx)+\Delta_g q(\bx)\big)+O(\epsilon^2)),
\]
such that,
\BEA
\frac{1}{q_\epsilon(\bx)} &=& \epsilon^{-d/2}\left[\frac{1}{m_0q(\bx)}-\epsilon\frac{m_2}{m_0^2}\left(\frac{\omega(\bx)}{q(\bx)}+\frac{\Delta_gq(\bx)}{q(\bx)^2}\right)+O(\epsilon^2)\right],\notag\\
\frac{1}{q_\epsilon(\bx)q_\epsilon(\by)} &=& \frac{\epsilon^{-d}}{m_0^2q(\bx)q(\by)}\left[1-\epsilon\frac{m_2}{m_0}\left(\omega(\bx)+\frac{\Delta_gq(\bx)}{q(\bx)} +\omega(\by)+\frac{\Delta_gq(\by)}{q(\by)} \right)+O(\epsilon^2)\right], \notag \\
\hat{k}_\epsilon(\bx,\by) &=& \frac{\epsilon^{-d} \tilde{k}_\epsilon(\bx,\by)}{m_0^2q(\bx)q(\by)}\left[1-\epsilon\frac{m_2}{m_0}\left(\omega(\bx)+\frac{\Delta_gq(\bx)}{q(\bx)} +\omega(\by)+\frac{\Delta_gq(\by)}{q(\by)} \right)+O(\epsilon^2)\right].\notag
\EEA
Since
\[
\frac{f(\bx)q(\bx)}{q_\epsilon(\bx)} = \epsilon^{-d/2}\left[\frac{f(\bx)}{m_0}-\epsilon\frac{m_2}{m_0^2}\left(\omega(\bx)f(\bx)+\frac{f(\bx)\Delta_gq(\bx)}{q(\bx)}\right)+O(\epsilon^2)\right],
\]
we have
\BEA
\widetilde{G}_\epsilon f(\bx) &:=&  \int_\mathcal{M} \tilde{k}_\epsilon(\bx,\by) f(\by)\frac{q(\by)}{q_\epsilon(\by)}d\text{Vol}(\by)\notag \\
&=& \epsilon^{d/2}\left[ m_0 \frac{f(\by)q(\by)}{q_\epsilon(\by)} + \epsilon m_2 \left(\omega(\bx)\frac{f(\by)q(\by)}{q_\epsilon(\by)} + \Delta_g\frac{f(\by)q(\by)}{q_\epsilon(\by)}\right) + O(\epsilon^2)\right] \notag \\
&=& f(\bx) -\epsilon \frac{m_2}{m_0}\left(\omega(\bx)f(\bx)+\frac{f(\bx)\Delta_gq(\bx)}{q(\bx)}\right) + \epsilon m_2 \omega(\bx) \frac{f(\bx)}{m_0} + \epsilon \frac{m_2}{m_0} \Delta_g f(\bx) + O(\epsilon^2) \notag \\ &=&
f(\bx) -\epsilon \frac{m_2}{m_0} \frac{f(\bx)\Delta_gq(\bx)}{q(\bx)}+ \epsilon \frac{m_2}{m_0} \Delta_g f(\bx) + O(\epsilon^2) \notag
\EEA
and
\BEA
\frac{\widetilde{G}_\epsilon f(\bx)}{q_\epsilon(\bx)} = \epsilon^{-d/2}
\left[
\frac{f(\bx)}{m_0 q(\bx)}
+\epsilon\left(
\frac{1}{m_0 q(\bx)}
\frac{m_2}{m_0}
\left(
\Delta_g f(\bx)-\frac{f(\bx)\Delta_g q(\bx)}{q(\bx)}
\right)
- f(\bx)\frac{m_2}{m_0^2}
\left(
\frac{\omega(\bx)}{q(\bx)}
+\frac{\Delta_g q(\bx)}{q(\bx)^2}
\right)
\right)
+O(\epsilon^2)
\right].\notag
\EEA

Subsequently,
\[
\hat{q}_\epsilon(\bx) = \frac{\widetilde{G}_\epsilon 1(\bx)}{q_\epsilon(\bx)}=
\epsilon^{-d/2}
\left[
\frac{1}{m_0 q(\bx)}
+\epsilon \frac{m_2}{m_0^2}
\left(
-\frac{2\Delta_g q(\bx)}{q(\bx)^2}
-\frac{\omega(\bx)}{q(\bx)}
\right)
+O(\epsilon^2)
\right].
\]
Since,
\BEA
\frac{1}{\sqrt{\hat q_\epsilon(\bx)}}
&=& \epsilon^{d/4}\sqrt{m_0 q(\bx)}
\left[
1
+\frac{\epsilon}{2}\frac{m_2}{m_0}
\left(
\frac{2\Delta_g q(\bx)}{q(\bx)}+\omega(\bx)
\right)
+O(\epsilon^2)
\right]\notag \\
\frac{1}{\sqrt{\hat q_\epsilon(\bx)\hat q_\epsilon(\by)}}
&=&
\epsilon^{d/2} m_0 \sqrt{q(\bx)q(\by)}
\left[
1+\frac{\epsilon}{2}\frac{m_2}{m_0}
\left(
\frac{2\Delta_g q(\bx)}{q(\bx)}+\omega(\bx)
+\frac{2\Delta_g q(\by)}{q(\by)}+\omega(\by)
\right)
+O(\epsilon^2)
\right].\notag
\EEA
we obtain an estimate to the continuous Diffusion Maps kernel,
\BEA
k_\epsilon(\bx,\by)
=
\frac{\epsilon^{-d/2}\,\tilde{k}_\epsilon(\bx,\by)}{m_0\sqrt{q(\bx)q(\by)}}
\left[
1
-\frac{\epsilon}{2}\frac{m_2}{m_0}
\left(\omega(\bx)+\omega(\by)\right)
+O(\epsilon^2)
\right].\label{CDMexpansion}
\EEA

In the remaining of this Appendix, we want compute,
\[
S_\epsilon f(\bx) = \int_\mathcal{M} k_\epsilon(\bx,\by)f(\by)q(\by)d\text{Vol}(\by). 
\]
To this end, we note that
\[
 \frac{f(\bx)q(\bx)}{q_\epsilon(\bx)\sqrt{\hat q_\epsilon(\bx)}}
=
\epsilon^{-d/4}\sqrt{m_0 q(\bx)}
\left[
\frac{f(\bx)}{m_0}
-\epsilon\frac{m_2}{2m_0^2}\,\omega(\bx)f(\bx)
+O(\epsilon^2)
\right],
\]
and 
\BEA
 \frac{1}{q_\epsilon(\bx)\sqrt{\hat q_\epsilon(\bx)}}
=
\epsilon^{-d/4}\frac{\sqrt{m_0}}{\sqrt{q(\bx)}}
\left[
\frac{1}{m_0}
-\epsilon\frac{m_2}{2m_0^2}\,\omega(\bx)
+O(\epsilon^2)
\right].\label{eq:phi_e}
\EEA
Then,
\[
\int_{\mathcal{M}} \tilde{k}_{\epsilon}(\bx,\by)\,
 \frac{f(\by)q(\by)}{q_\epsilon(\by)\sqrt{\hat q_\epsilon(\by)}}\,d\mathrm{Vol}(\by)
=
\epsilon^{d/4}
\left[
\sqrt{m_0 q(\bx)}\,f(\bx)
+\epsilon\frac{m_2}{\sqrt{m_0}}
\left(
\frac{1}{2}\omega(\bx)\sqrt{q(\bx)}f(\bx)
+
\Delta_g(\sqrt{q}f)(\bx)
\right)
+O(\epsilon^2)
\right].
\]
such that, 
\BEA
S_\epsilon f(\bx) &=& \int_{\mathcal{M}} \frac{\tilde{k}_{\epsilon}(\bx,\by)}{q_\epsilon(\bx)\sqrt{\hat q_\epsilon(\bx)}q_\epsilon(\by)\sqrt{\hat q_\epsilon(\by)}}f(\by)q(\by)\,d\mathrm{Vol}(\by)\notag \\
&=& f(\bx)
+\epsilon\frac{m_2}{m_0}\,
\frac{\Delta_g(\sqrt{q}f)(\bx)}{\sqrt{q(\bx)}}
+O(\epsilon^2).\notag
\EEA

Then, we have
\[
L_\epsilon f(\bx) = \frac{m_0(S_\epsilon -I )f(\bx)} {\epsilon m_2} = \frac{\Delta_g (\sqrt{q}f)(\bx)}{\sqrt{q(\bx)}} +O(\epsilon).
\]
If $q=1/\text{Vol}(\mathcal{M})$, then $L_\epsilon$ approximate the Laplace-Beltrami operator. For nonuniform density $q$, if $\{\lambda,\varphi\}$ is an eigensolution to $L_\epsilon$, then $\{\lambda \sqrt{q}\varphi\}$ is an eigensolution to the Laplace-Beltrami operator, that is,
\[
\Delta_g (\sqrt{q}\varphi)(\bx) = \lambda (\sqrt{q}\varphi)(\bx),
\]
}

\section{Mathematical proofs}\label{app:proof}

\subsection{Proof of Lemma~\ref{lem:3.1}}\label{proof:lemma3.1}

The key idea of the proof is to  employ the standard KDE uniform estimate following \cite{gine2002rates} to deduce errors between $q_{\epsilon,N}$ and $q_\epsilon$ as well as between $\hat{q}_{\epsilon,N}$ and $\hat{q}_\epsilon$.
Specifically, we will make use of the following concentration bound.

\begin{lem}[Talagrand/Bernstein-type inequality, see Chapter 3 of \cite{gine2016mathematical}]\label{lem:talagrand}
Let $X_1,\dots,X_N$ be i.i.d. with law $P$, and let $\mathcal F$ be a measurable class of real-valued functions on the sample space. Define
\[
Pf:=\mathbb E[f(\bx)],
\qquad
P_Nf:=\frac1N\sum_{i=1}^N f(X_i).
\]
Assume that $|f|\le b$ for all $f\in\mathcal F$, and set
\[
\sigma^2:=\sup_{f\in\mathcal F}\mathrm{Var}(f(\bx)).
\]
If $\mathcal F$ is VC-type (equivalently, has suitable entropy control), then there exist constants $C,c>0$ such that for all $t\ge 0$,
\[
\Pr\!\left(
\sup_{f\in\mathcal F}|(P_N-P)f|
> C\left(\sigma\sqrt{\frac{\log N+t}{N}}+b\frac{\log N+t}{N}\right)
\right)
\le e^{-ct}.
\]
\end{lem}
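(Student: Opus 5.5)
The statement is the classical Talagrand concentration inequality for suprema of empirical processes, combined with a VC-type bound on the expected supremum. I would prove it in three steps: first bound $\mathbb E Z$, where $Z:=\sup_{f\in\mathcal F}|(P_N-P)f|$; then control the deviation $Z-\mathbb E Z$; then merge the two so that every logarithmic factor becomes $\log N+t$.

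\textbf{Step 1 (expected supremum).} I would make the VC-type hypothesis concrete as the uniform covering bound
\[
\sup_Q \mathcal N\bigl(\mathcal F,L^2(Q),\tau b\bigr)\le (A/\tau)^v,\qquad 0<\tau\le 1 .
\]
By symmetrization, $\mathbb E Z\le 2\,\mathbb E\sup_f|N^{-1}\sum_i\varepsilon_i f(X_i)|$ with Rademacher signs $\varepsilon_i$. Conditionally on the sample, Dudley's entropy integral with respect to the empirical $L^2(P_N)$ metric gives a bound of order $N^{-1/2}\int_0^{\hat\sigma}\sqrt{v\log(Ab/\tau)}\,d\tau$, where $\hat\sigma^2=\sup_f P_Nf^2$. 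The remaining issue is that $\hat\sigma$ is random. I would remove it with the standard bootstrapping argument of Gin\'e--Guillou (equivalently Einmahl--Mason): write $\mathbb E\hat\sigma^2\le\sigma^2+\mathbb E\sup_f|(P_N-P)f^2|$, apply the contraction principle to the squared class (using $|f|\le b$), and solve the resulting quadratic inequality. This yields
\[
\mathbb E Z\le C\left(\sigma\sqrt{\frac{v\log(Ab/\sigma)}{N}}+\frac{v\,b\log(Ab/\sigma)}{N}\right).
\]

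\textbf{Step 2 (concentration).} I would apply Bousquet's form of Talagrand's inequality: with probability at least $1-e^{-t}$,
\[
Z\le \mathbb E Z+\sqrt{\frac{2t(\sigma^2+2b\,\mathbb E Z)}{N}}+\frac{bt}{3N}.
\]
The cross term is handled by AM--GM, $\sqrt{2b\,\mathbb E Z\,t/N}\le \mathbb E Z+bt/(2N)$. Hence, with the same probability, $Z\le 2\mathbb E Z+\sigma\sqrt{2t/N}+bt/N$. The absolute value inside $Z$ costs nothing: I would apply the inequality to the class $\mathcal F\cup(-\mathcal F)$, which is still VC-type.

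\textbf{Step 3 (replacing $\log(b/\sigma)$ by $\log N$).} This is the step I expect to be the main obstacle. The Step 1 bound carries $\log(Ab/\sigma)$, which is not bounded by $C\log N$ when $\sigma$ is very small. My first attempt would be to run Steps 1 and 2 with $\sigma$ replaced by $\bar\sigma:=\max(\sigma,b/\sqrt N)$. Since $\bar\sigma$ is still an upper bound for the standard deviation, both steps remain valid, and now $\log(Ab/\bar\sigma)\le \log A+\tfrac12\log N$. The price is an extra term $\bar\sigma\sqrt{\log N/N}\le \sigma\sqrt{\log N/N}+b\sqrt{\log N}/N$.

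This extra $b\sqrt{\log N}/N$ is dominated by $b(\log N+t)/N$, which is already present in the claimed bound. So, after absorbing $v$ and $A$ into $C$ and setting $c=1$, the inequality holds with probability at least $1-e^{-ct}$. As a fallback, if this truncation were too lossy, I would instead use Talagrand's original inequality with the $\log(1+\cdot)$ tail. That version directly produces the Bernstein-type mixture $\sigma\sqrt{(\log N+t)/N}+b(\log N+t)/N$.
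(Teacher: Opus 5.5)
Your proposal is correct and is essentially the route behind the paper's citation. The paper gives no proof of its own and defers to Chapter 3 of Gin\'e--Nickl, where this bound is obtained exactly as you do: Bousquet's form of Talagrand's inequality combined with the symmetrization/entropy/Gin\'e--Guillou bound on $\mathbb E\sup_{f}|(P_N-P)f|$ for VC-type classes, and your truncation $\bar\sigma=\max(\sigma,b/\sqrt N)$ is the right device for turning the $\log(b/\sigma)$ of that bound into the $\log N$ of the statement.

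One small repair is needed in Step~1. The step $\mathbb E\hat\sigma^2\le\sigma^2+\mathbb E\sup_f|(P_N-P)f^2|$ requires $\sigma^2\ge\sup_f Pf^2$, whereas the lemma's $\sigma^2$ is a supremum of variances. Run Step~1 on the centered class $\{f-Pf: f\in\mathcal F\}$ instead: it is still VC-type with envelope $2b$ (it lies in $\mathcal F-[-b,b]$), and it leaves $Z$ unchanged because $(P_N-P)(f-Pf)=(P_N-P)f$.
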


With this Lemma, we have:

\begin{proof}[Proof of Lemma~\ref{lem:3.1}]
Define $f_\bx(\by):=\tilde k_\epsilon(\bx,\by)$ and $\mathcal F_\epsilon:=\{f_\bx:\bx\in\mathcal M\}$. Then
\[
\sup_{\bx\in\mathcal M}\big|q_{\epsilon,N}(\bx)-q_\epsilon(\bx)\big|
=\sup_{f\in\mathcal F_\epsilon}\big|(P_N-P)f\big|.
\]
Since $0\le \tilde k_\epsilon\le 1$, we have
\[
b_\epsilon:=\sup_{f\in\mathcal F_\epsilon}\|f\|_\infty\le 1.
\]
For the variance term, for each fixed $\bx$,
\[
\mathbb E\left[f_\bx(\bx)^2\right]
=\int_{\mathcal M}e^{-\|\bx-\by\|^2/(2\epsilon)}q(\by)\,d\mathrm{Vol}(\by)
=O(\epsilon^{d/2}),
\]
uniformly in $\bx$ by the same local normal-coordinate argument used for $q_\epsilon$.
Hence
\[
\sigma_\epsilon^2:=\sup_{f\in\mathcal F_\epsilon}\mathrm{Var}(f(\bx))\lesssim \epsilon^{d/2}.
\]
Applying Lemma~\ref{lem:talagrand} with $t=c^{-1}\log N$, $\sigma_\epsilon^2\lesssim\epsilon^{d/2}$, and $b_\epsilon\le 1$ gives 
\[
\sup_{\bx\in\mathcal M}\big|q_{\epsilon,N}(\bx)-q_\epsilon(\bx)\big|
\le C_1\sqrt{\frac{\epsilon^{d/2}\log N}{N}}+C_2\frac{\log N}{N}.
\]
with probability at least $1-N^{-1}$ . Under $\log N/(N\epsilon^{d/2})\to 0$, the linear remainder is lower order:
\[
\frac{\log N}{N}=o\!\left(\sqrt{\frac{\epsilon^{d/2}\log N}{N}}\right),
\]
and the proof is  for \eqref{eq:error_q_en}. For \eqref{eq:error_qhat_en}, we can repeat the argument above with $f_\bx(\by) = \frac{\tilde{k}_{\epsilon}(\bx,\by)}{q_{\epsilon,N}(\bx)q_{\epsilon,N}(\by)} = \frac{\tilde{k}_{\epsilon}(\bx,\by)}{q_{\epsilon}(\bx)q_{\epsilon}(\by)} + O\left(\epsilon^{d/2}\sqrt{\frac{\log N}{N\epsilon^{d/2}}}\right)$. Thus, we have $b_\epsilon \leq \epsilon^{-d}\left(1 + \epsilon^{d/2}\sqrt{\frac{\log N}{N\epsilon^{d/2}}}\right)$ and $\sigma_\epsilon^2\lesssim \epsilon^{-2d}\left(1 + \epsilon^{d/2}\sqrt{\frac{\log N}{N\epsilon^{d/2}}} \right )$, and the result follows the same argument as above.
\end{proof}

\subsection{Proof of Lemma~\ref{lem:localerror_k}}\label{proof:lemma3.2}

We note that,
\BEA
\left|  k_{\epsilon,N}(\bx,\by)- k_{\epsilon,N}(\bx',\by)  \right| \leq \|\nabla_{\bx}k_{\epsilon,N}(\tilde{\bx})\| \|\bx- \bx'\|,\label{eq:lips+eq1}
\EEA
where $\tilde{\bx}\in \gamma_{\bx,\bx'} \subset B_{h}(\by) \subset \mathcal{M}$, where $\gamma_{\bx,\bx'}$ denotes the geodesic path connecting $\bx$ and $\bx'$. From Proposition~\ref{prop:Gaussapprox} and the smoothness of the normalization factor (Remark~\ref{rem:smoothbound}), one have, with probability higher than $1-\frac{4}{N}$,
\[
\|\nabla_{\bx} k_{\epsilon,N} (\tilde{\bx},\by)\| = \frac{\text{Vol}(\mathcal{M})}{m_0 \epsilon^{d/2}}\left[\| \nabla_{\bx}\tilde{k}_{\epsilon}(\tilde{\bx},\by)  \| \left(1+ O(\epsilon) +O\!\left(\sqrt{\frac{\log N}{N\epsilon^{d}}}\right)\right)  + \epsilon\|\nabla_{\bx}\omega(\tilde{\bx})\|\right] = O \left(h \epsilon^{-1-d/2}\right),
\]
where an additional order-$\epsilon$ term depends on $\omega$ (see Eq.~\eqref{CDMexpansion} in the Appendix~\ref{app:A}). Here we used the fact that
when 
$\bx'\in B_{h}(\mathbf{y})$, a geodesic ball center at $\by$ and radius $h$, the distance bound $\|\bx'-\by\| = d_g(\bx',\by) +O(h^3) \leq C h$
(e.g., see the proof of Prop 3.1~in \cite{jiang2023ghost})to deduce,
\[
\|\nabla_\bx\tilde k_\epsilon(\bx,\by)\|
=\frac{\|\bx-\by\|}{2\epsilon}\exp\!\left(-\frac{\|\bx-\by\|^2}{4\epsilon}\right)
\le \frac{h}{2\epsilon}.
\]
Inserting the leading order of this bound into \eqref{eq:lips+eq1}, employing Lemma~\ref{samplingassumption}, and counting the probability, the proof is complete. We should note that the estimate continues to hold on the diagonal too.

\section{More on Numerical Results}\label{app:num}

\subsection{Heat Kernel}\label{app:num_heat}

This section provides more detailed results corresponding to the summary presented in Section \ref{sec:num_heat}, including
spatial error for a specific source and detailed convergence plots involving both $\epsilon$ and $N$. In addition, we also demonstrate convergence on a disk with Neumann boundary condition; this is to show that the DM kernel can estimate heat kernel on a manifold with boundary, even though the theory is developed for manifolds without boundary.


The disk $\mathbb{D}$ is parameterized by the polar coordinates $(\rho,\theta)$ through
$$
\bx(\rho,\theta) = (\rho\cos\theta, \rho\sin\theta),\quad \rho\in[0,1], \theta\in[0,2\pi).
$$
The heat kernel on $\mathbb{D}$ with Neumann BC is defined through a series sum
$$
\begin{aligned}
k_{\mathbb{D}}((r,\theta),(r_0,\theta_0);t)
&=
\frac{1}{\pi}
+\sum_{k=1}^{\infty}
e^{-\alpha_{0,k}^2t}
\frac{J_0(\alpha_{0,k}r)J_0(\alpha_{0,k}r_0)}
{2\pi N_{0,k}}
+
\sum_{n=1}^{\infty}\sum_{k=1}^{\infty}
e^{-\alpha_{n,k}^2t}
\frac{J_n(\alpha_{n,k}r)J_n(\alpha_{n,k}r_0)
\cos(n(\theta-\theta_0))}
{\pi N_{n,k}},
\end{aligned}
$$
where $\alpha_{n,k}$ is the $k$-th positive root of the derivative of the $n$-th Bessel function, i.e., $J_n'(\alpha_{n,k})=0$, and
$$
N_{n,k}=\int_0^1 J_n(\alpha_{n,k}\rho)^2\rho\,d\rho
=
\frac{J_n(\alpha_{n,k})^2
-J_{n-1}(\alpha_{n,k})J_{n+1}(\alpha_{n,k})}{2}.
$$
Numerically the sum is truncated to $n\leq 42$ and $k\leq 42$, so that the truncation error is less than $10^{-13}$ and lower than the estimation error of the DM kernel.

Again, due to its scaling, the DM kernel is compared with the heat kernel multiplied by $\mathrm{Vol}(\mathcal{M})$.  For disk, $\mathrm{Vol}(\mathbb{D})=\pi$.

\paragraph{Illustrations.} The heat kernels for circle and torus are evaluated on test points specified in Section \ref{sec:num_heat}.  For the disk, the kernel is evaluated on 4096 randomly sampled points with 5 randomly selected sources; all sources are away from the boundary to avoid additional error due to boundary effects.  A diffusion time of $t=0.04$ is chosen for disk.

The heat kernels are illustrated in Fig.~\ref{fig:heat_repr}, where the heat kernel for one of the sources is shown for each case.  Clearly the DM kernel represents well the heat kernel on all the three manifolds, including the disk.

\paragraph{More convergence plots.} Detailed convergence plots involving both $\epsilon$ and $N$ are shown in Fig.~\ref{fig:hk_conv}, where all cases follow a consistent trend, including the disk case.  For each $\epsilon$, the error decreases with larger $N$ until plateau.  When $N$ is sufficiently large, for a fixed $N$, the error decreases with smaller $\epsilon$, which aligns with the convergence theory.



\begin{figure}
\begin{subfigure}{\textwidth}
    \centering
    \includegraphics[width=\textwidth]{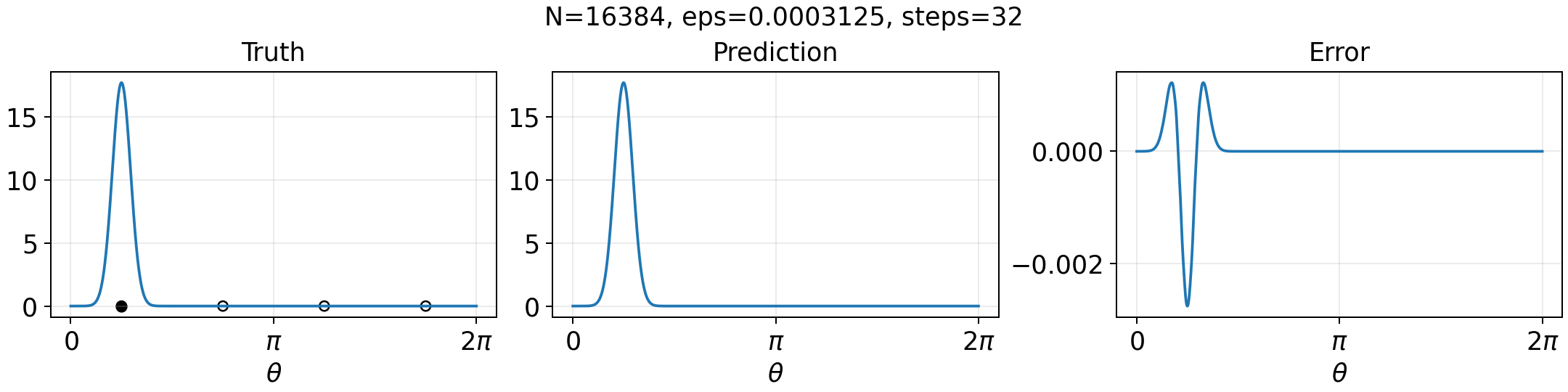}
    \caption{Circle (4 sources in total).}
    \label{fig:hk_circ_repr}
\end{subfigure}
\begin{subfigure}{\textwidth}
    \centering
    \includegraphics[width=\textwidth]{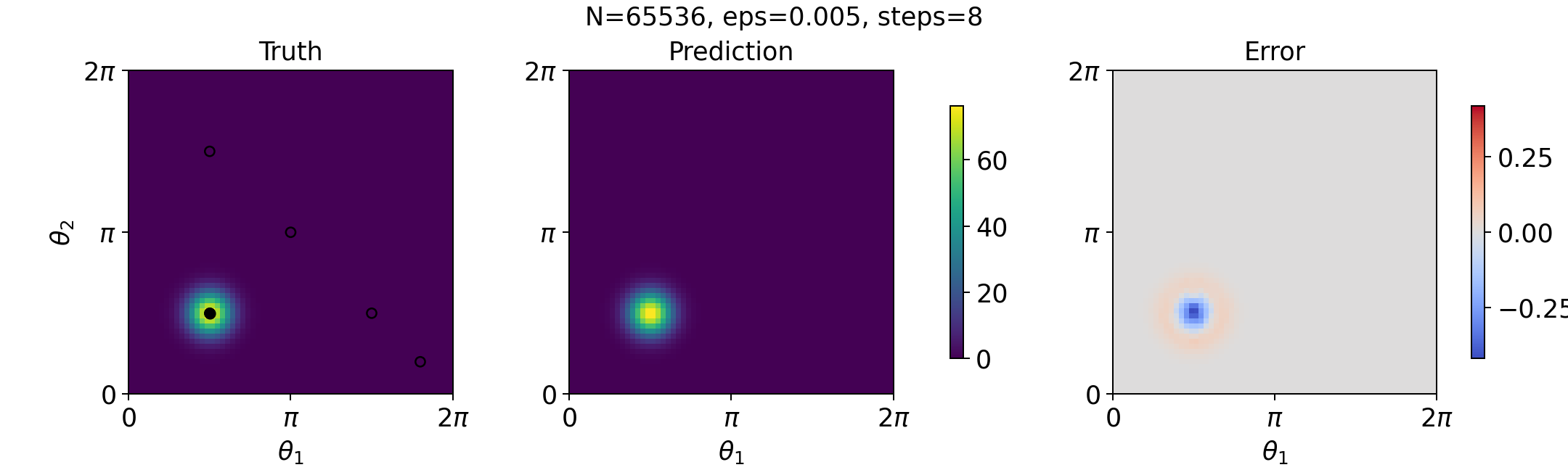}
    \caption{Flat torus (5 sources in total).}
    \label{fig:hk_torus_repr}
\end{subfigure}
\begin{subfigure}{\textwidth}
    \centering
    \includegraphics[width=\textwidth]{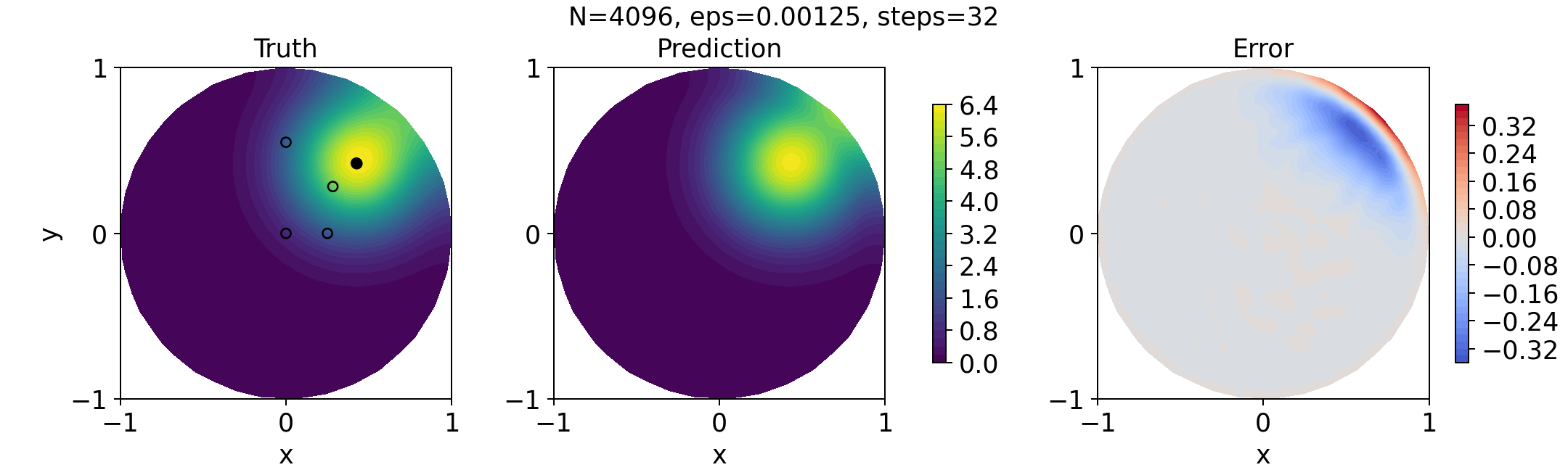}
    \caption{Disk (5 sources in total).}
    \label{fig:hk_disk_repr}
\end{subfigure}
\caption{Illustration of heat kernels on manifolds; heat kernel is shown for the source at the black dot; the convergence results shown in Figs.~\ref{fig:heat_conv} and \ref{fig:hk_conv} are averaged over sources at both the black dot and the circles.}\label{fig:heat_repr}
\end{figure}

\begin{figure}
\begin{subfigure}{0.32\textwidth}
    \centering
    \includegraphics[width=\linewidth]{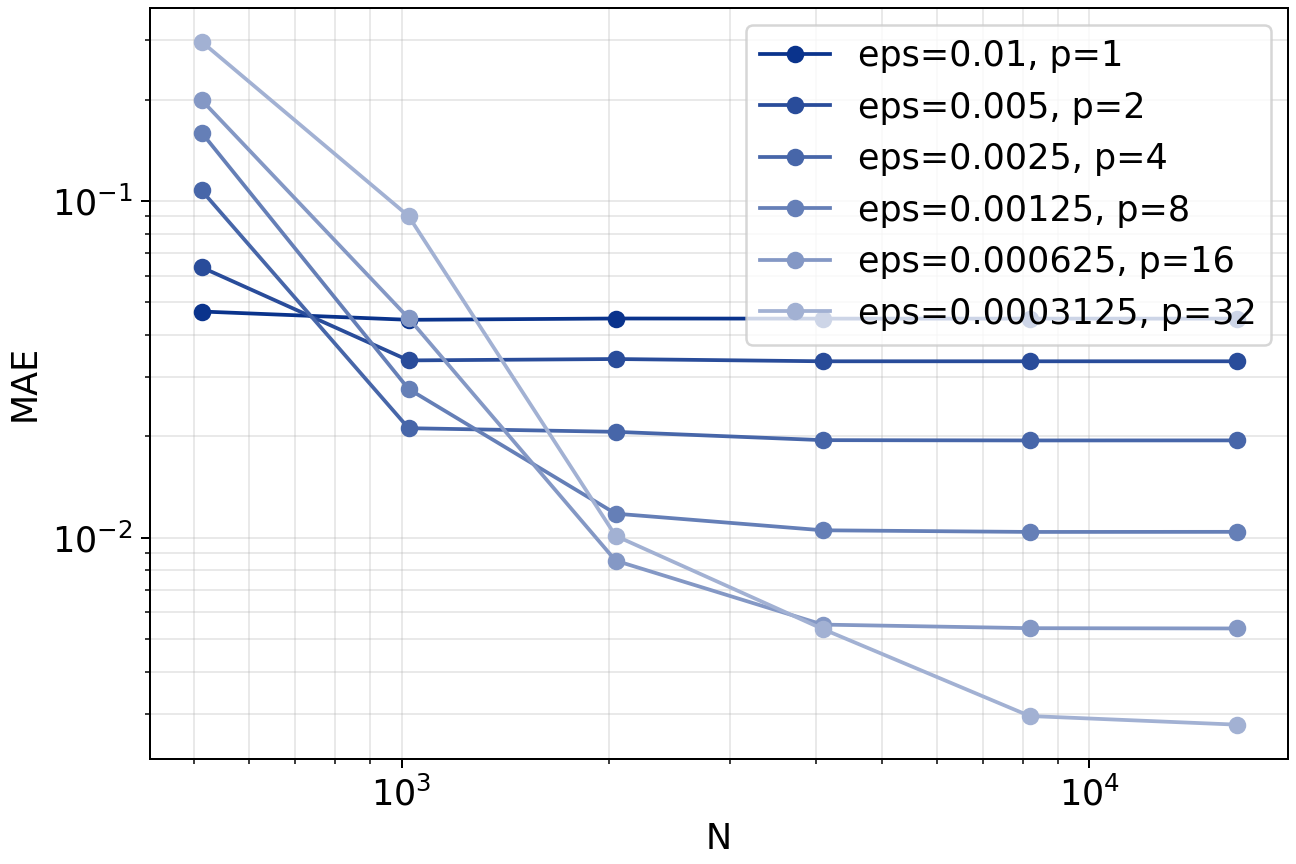}
    \caption{Circle, diffusion time $t=0.01$.}
    \label{fig:hk_conv_circ}
\end{subfigure}
\hfill
\begin{subfigure}{0.32\textwidth}
    \centering
    \includegraphics[width=\linewidth]{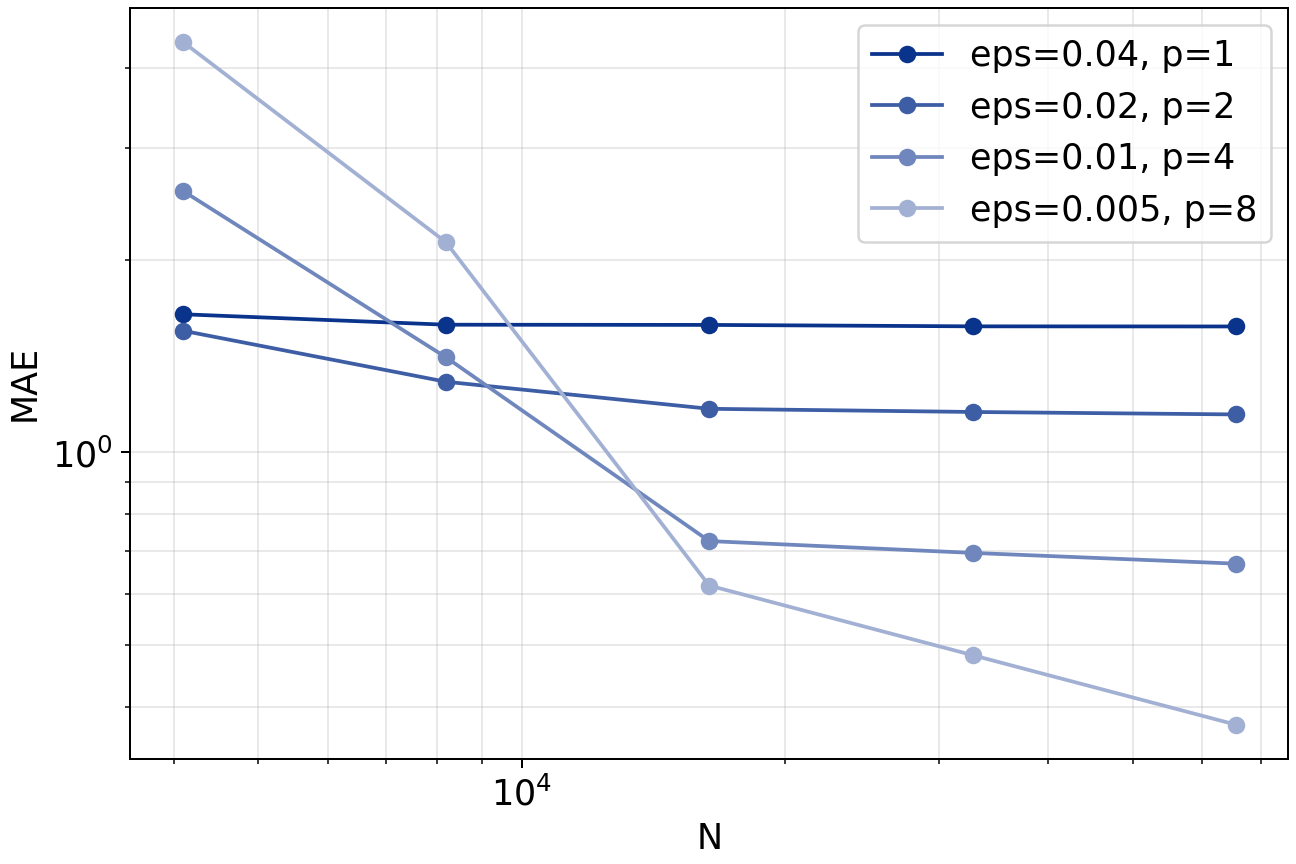}
    \caption{Flat torus, diffusion time $t=0.04$.}
    \label{fig:hk_conv_torus}
\end{subfigure}
\hfill
\begin{subfigure}{0.32\textwidth}
    \centering
    \includegraphics[width=\linewidth]{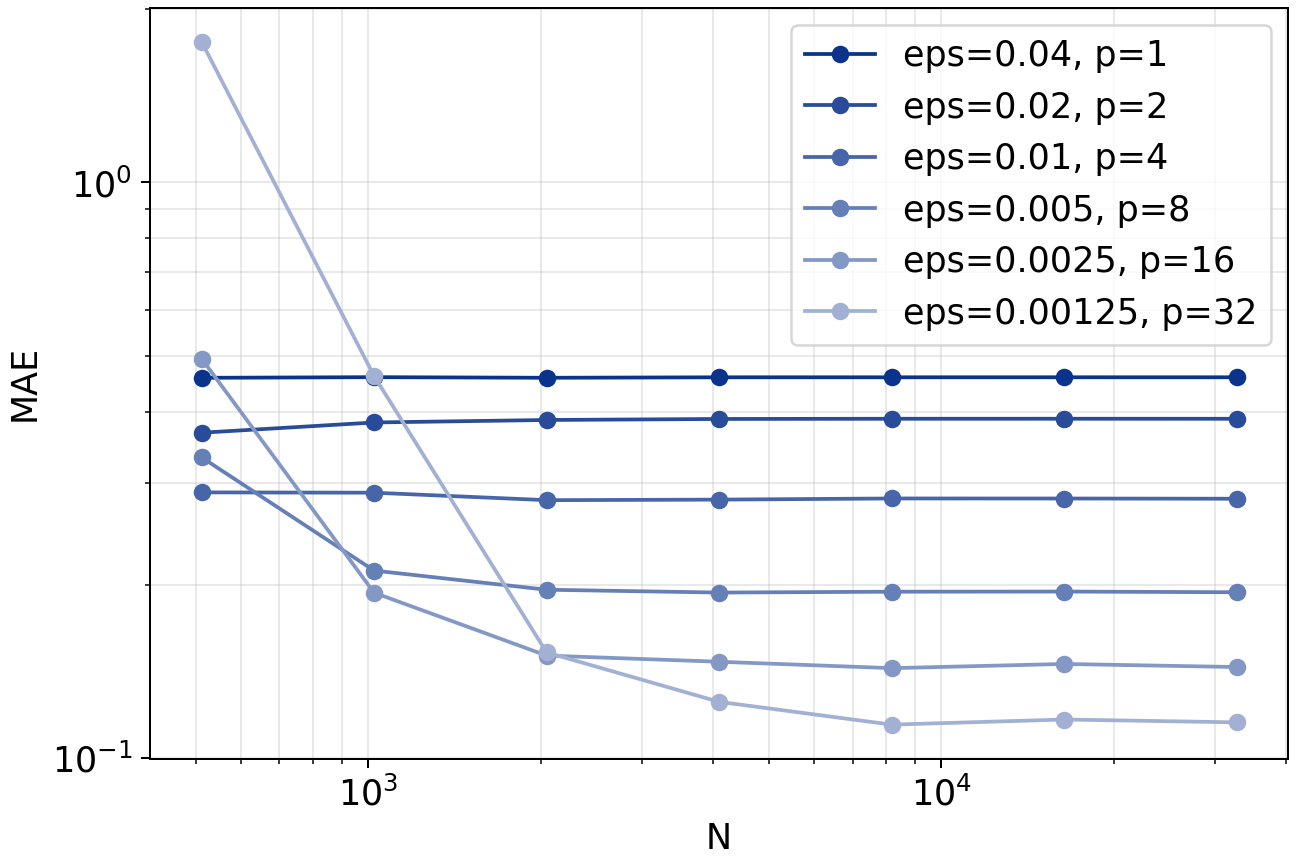}
    \caption{Disk, diffusion time $t=0.04$.}
    \label{fig:hk_conv_disk}
\end{subfigure}
\caption{Convergence of heat kernels on three manifolds.}
\label{fig:hk_conv}
\end{figure}


\subsection{KRR on semicircle and full circle}\label{app:circ}

This section provides more detailed discussion corresponding to the summary presented in Section \ref{sec5.2.1}.

\paragraph{Illustration of the labels.}
Figures \ref{fig:circ_label} and \ref{fig:circ_semi_mode} illustrate the KRR labels and relevant function basis, respectively. Fig.~\ref{fig:circ_label} (a)-(b) illustrate the 12 functions sampled from $W_{\rm LB}$ and $W_{\rm RBF}$, respectively.  Fig.~\ref{fig:circ_semi_mode}(a)-(b) illustrate the basis of the two subspaces, respectively.
Three representative functions from one family are illustrated in Fig.~\ref{fig:circ_label}(c).
Lastly, the full circle LB labels are illustrated in  Fig.~\ref{fig:circ_label}(d).

\begin{figure}[htbp]
    \centering
    \includegraphics[width=\textwidth]{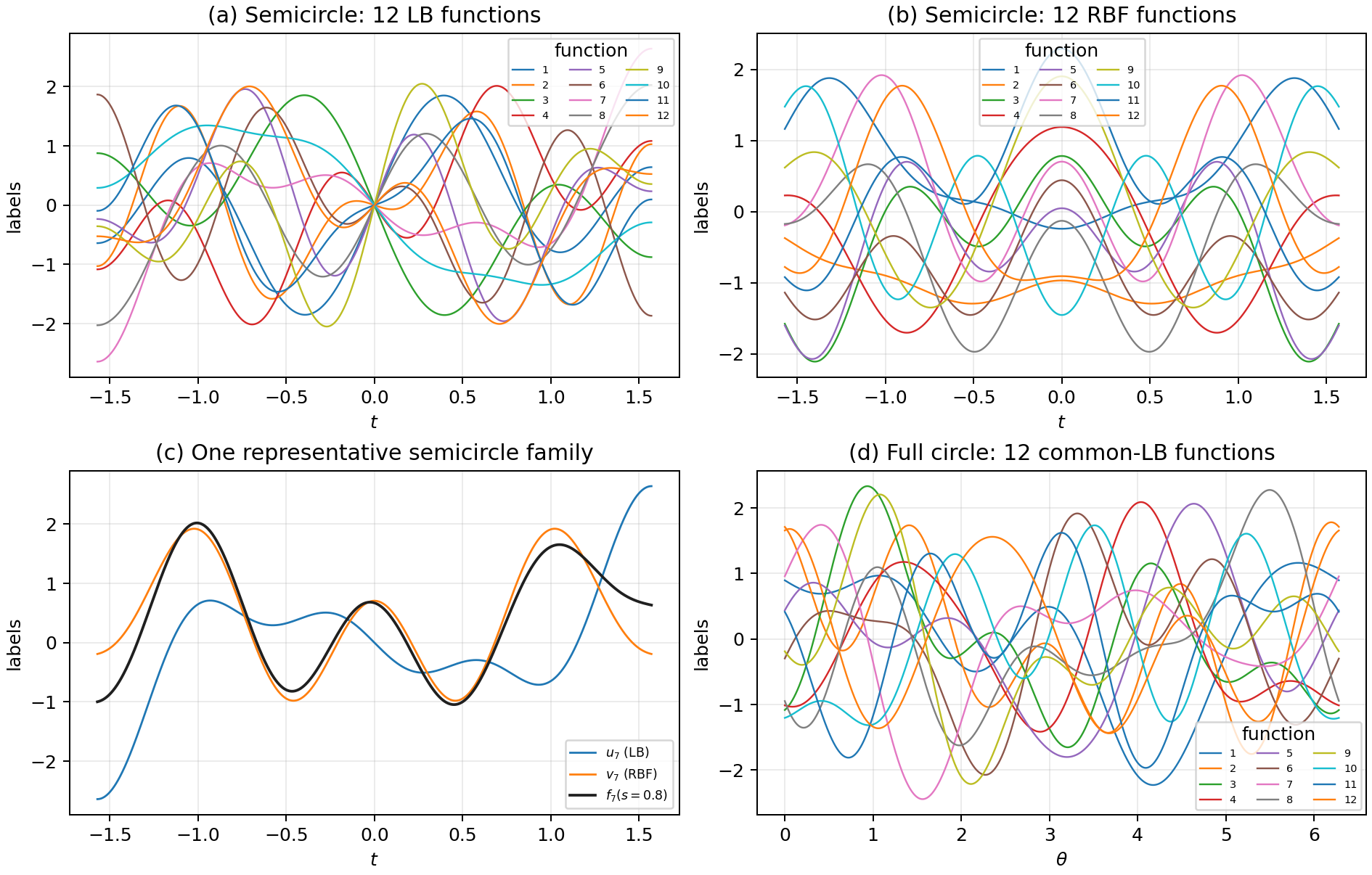}
    \caption{KRR labels for the semicricle and full circle cases.}
    \label{fig:circ_label}
\end{figure}

\begin{figure}[htbp]
    \centering
    \includegraphics[width=\textwidth]{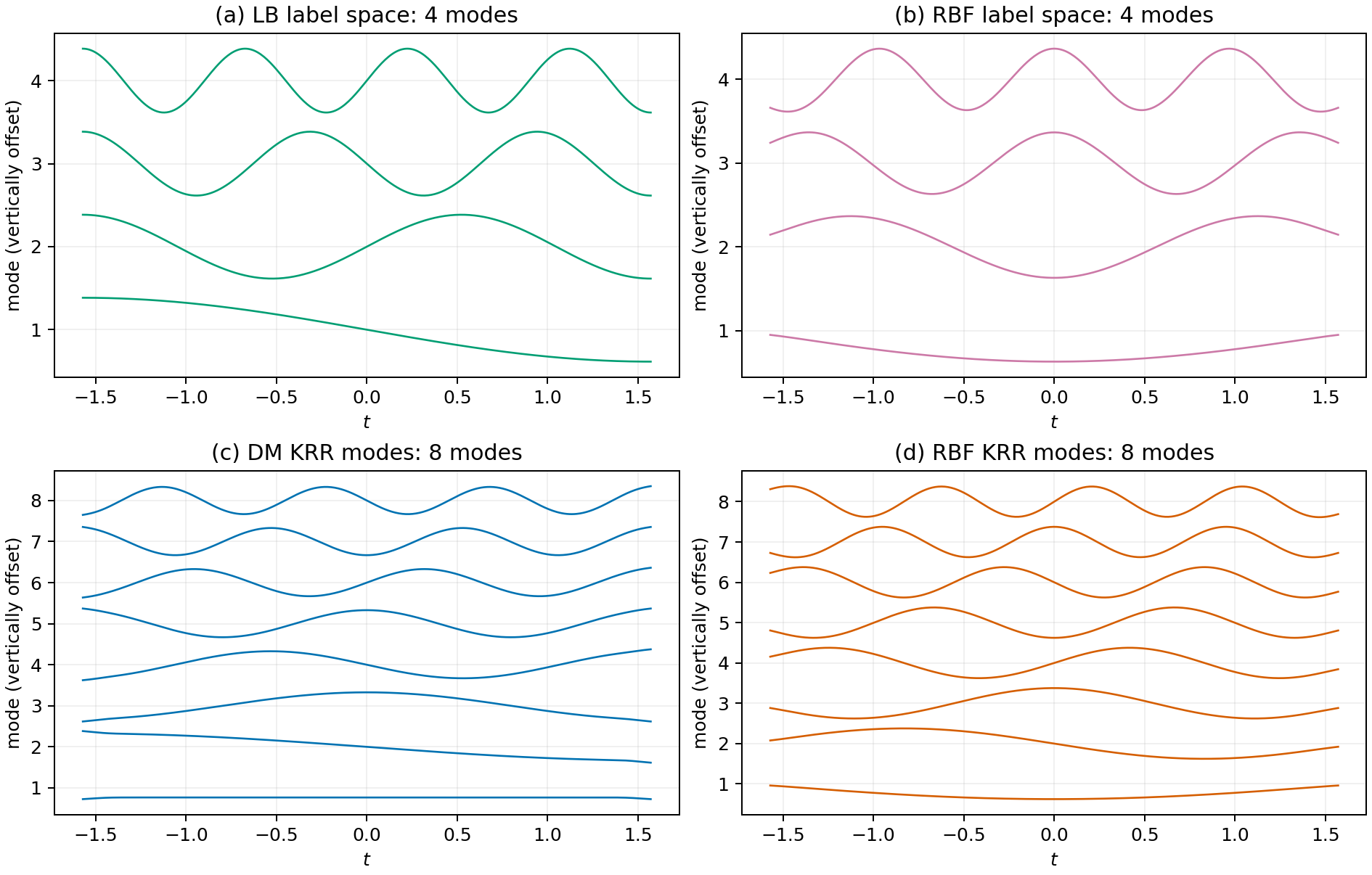}
    \caption{Modes for the semicircle case. The RBF label space (b) are obtained with $\epsilon=0.02$. Panels (c)-(d) show modes obtained from KRR fitting.}
    \label{fig:circ_semi_mode}
\end{figure}

\paragraph{Procedure for KRR fitting.}
The KRR models using DM and RBF kernels are independently tuned for every label using the following procedure.
First, to avoid the variability in random sampling, deterministic equispaced samples are used.
The semicircle uses 1024 equispaced training samples, including endpoints $t=0$ and $t=1$, and 1023 midpoints for validation.
The full circle uses 13 periodic training samples and 13 half-shifted validation samples.
Here, due to the periodicity and translation invariance, the full circle requires much fewer samples to achieve the same error level as in semicircle.
Given the samples, the DM and RBF kernels are evaluated using the ambient coordinates.
Second, the hyperparameters, bandwidth and ridge, are tuned by the average error evaluated at the validation samples.
The optimal hyperparameters are first searched on a $9\times 9$ log-spaced grid over $\epsilon\in[10^{-4},10^2]$ and $\eta\in[10^{-16},10^1]$, and the best candidate is further refined by a Nelder--Mead algorithm.
In addition, for RBF KRR, the hyperparameters are also searched at a fixed $\epsilon=0.02$ and ridge parameter $\eta$ over $[10^{-16}, 10^{-8}]$; this protects against a narrow ridge optimum at the same bandwidth used to construct the RBF target space.
Third, after a KRR model $\hat{f}$ is tuned for a label $f$, the KRR error is quantified using the $L^2$ norm $\|f-\hat{f}\|_{L^2}$.
Numerically, the integral for semicircle $L^2$ norm uses a composite 16-point Gauss–Legendre rule on each of its 1023
training intervals (16368 nodes total), and the integral for full-circle $L^2$ norm uses the 8192-point periodic trapezoidal rule.  The choices of quadrature points ensure machine precision in both cases.

\paragraph{Brief review of subspace alignment analysis.}

Let $\mathcal{U}=\operatorname{span}\{\phi_1,\ldots,\phi_p\}$ and $\mathcal{V}=\operatorname{span}\{\psi_1,\ldots,\psi_q\}$,
where the basis functions are orthonormal under the Hilbert-space inner product $\langle \phi_i,\phi_j\rangle=\delta_{ij}$ and $\langle \psi_i,\psi_j\rangle=\delta_{ij}$.
The relative geometry of the two subspaces is determined from the cross-inner-product matrix $C_{ij}=\langle\phi_i,\psi_j\rangle$, $C\in\mathbb{R}^{p\times q}$.
If $C=L\Sigma R^{\mathsf T}$, then the principal angles are
\[
\theta_i=\arccos(\sigma_i),
\qquad
i=1,\ldots,\min(p,q),
\]
where $\sigma_i$ are the singular values of $C$ in ascending order. These angles characterize the Grassmannian separation between the subspaces, independent of the chosen bases. In this study, the largest principal angle $\theta_{\max}=\theta_1$ is used to characterize the difference between two subspaces.

Procrustes alignment compares the basis functions themselves. Assuming $p\leq q$, the alignment seeks an orthonormal $p$-frame within $\mathcal{V}$ that best matches $\{\phi_i\}_{i=1}^p$:
\[
Q_\star
=
\underset{Q\in\mathbb{R}^{q\times p},\,Q^{\mathsf T}Q=I_p}
{\operatorname{argmin}}
\sum_{i=1}^{p}
\left\|
\phi_i-\sum_{j=1}^{q}\psi_j Q_{ji}
\right\|_{\mathcal{H}}^2.
\]
Using the thin SVD $C=L\Sigma R^{\mathsf T}$, with
$L\in\mathbb{R}^{p\times p}$ and $R\in\mathbb{R}^{q\times p}$, the solution is $Q_\star=RL^{\mathsf T}$.
The aligned functions are therefore
\[
\widetilde{\psi}_i
=
\sum_{j=1}^{q}\psi_j(Q_\star)_{ji},
\qquad i=1,\ldots,p.
\]
This procedure selects and rotates the closest $p$-dimensional frame in
$\mathcal{V}$, while leaving the subspace $\mathcal{V}$ unchanged.

\paragraph{More plots of subspace comparison.}

The typical DM and RBF KRR modes on a semicircle are illustrated in Fig.~\ref{fig:circ_semi_mode}(c) and (d), respectively.  Note that both sets of modes resemble sinusoidal functions, but neither of them satisfy the Neumann boundary conditions.

In the main text, Fig.~\ref{fig:circ_semi_lb} shows that the DM modes can be linearly combined to align well with the LB modes.  Here in Figs.~\ref{fig:circ_semi_rbf} illustrates the opposite case, where the KRR modes are aligned to the RBF modes.  The RBF KRR modes captures the RBF basis, which is expected.  Interestingly, a DM KRR model, tuned against an RBF label, also produces KRR modes that can align well with the RBF modes.

Lastly, for completeness, the alignment of LB basis on a full circle is shown in Fig.~\ref{fig:circ_full_lb}.  Both DM and RBF can align with the true modes with negligible error; this indicates that both are suitable for fitting LB labels on the full circle.

\begin{figure}[htbp]
    \centering
    \includegraphics[width=\textwidth]{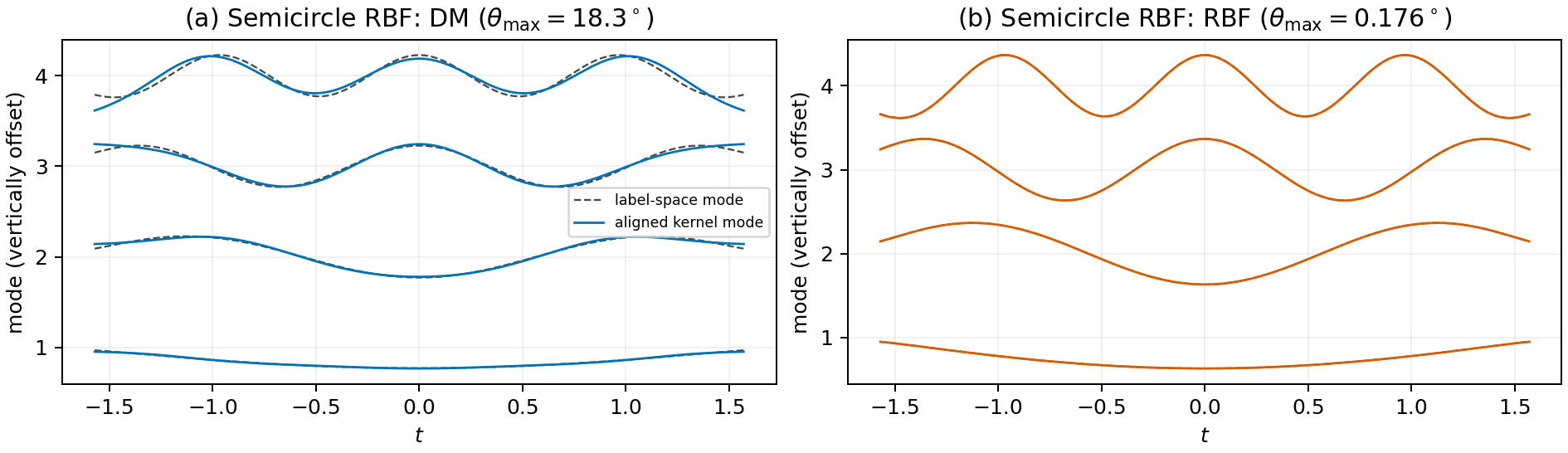}
    \caption{Subspace alignment for the RBF basis.}
    \label{fig:circ_semi_rbf}
\end{figure}

\begin{figure}[htbp]
    \centering
    \includegraphics[width=\textwidth]{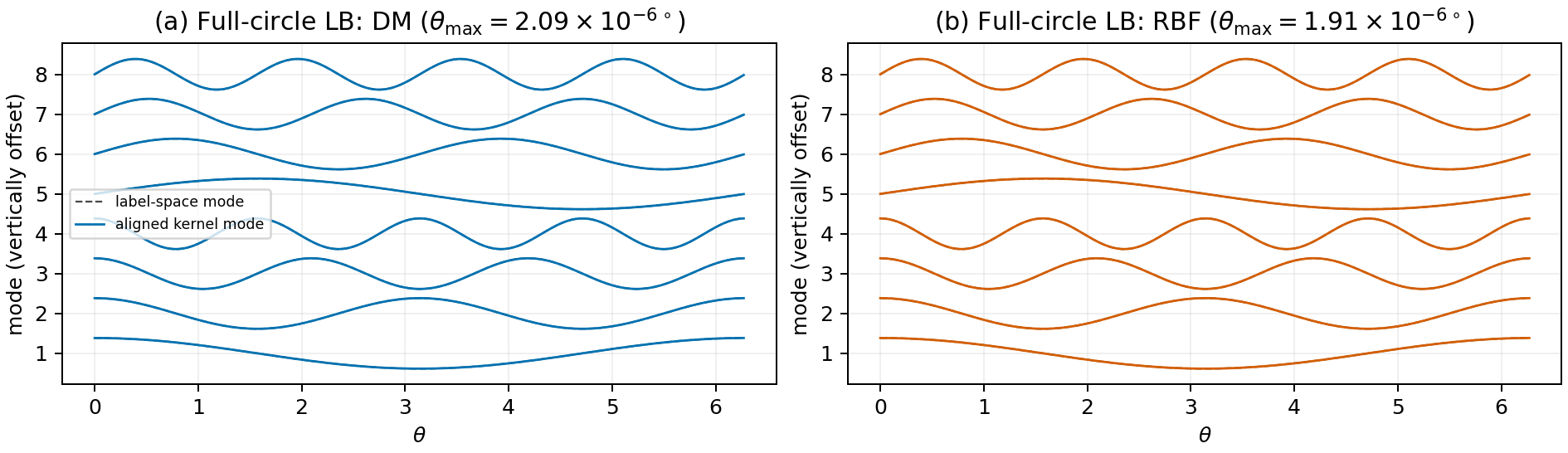}
    \caption{Subspace alignment for the LB basis on full circle.}
    \label{fig:circ_full_lb}
\end{figure}

\subsection{KRR on semi-torus}\label{app:torus}

This section provides more detailed discussion corresponding to the summary presented in Section \ref{sec5.2.2}.

\paragraph{Computation of LB eigenfunctions.}

The reference semi-torus is parameterized by the intrinsic coordinates
$(\theta,\phi)$ through
\[
\bx(\theta,\phi)
=
\bigl(
(a+\cos\theta)\cos\phi,\,
(a+\cos\theta)\sin\phi,\,
\sin\theta
\bigr),
\qquad
\theta\in[0,2\pi),\quad
\phi\in[0,\pi],
\]
where $a=2$. Its surface-area element and total area are
\[
dA(\theta,\phi) = (a+\cos\theta)\,d\theta\,d\phi,
\qquad
\operatorname{Area}(\mathcal M) = 2\pi^2a.
\]
Accordingly, the normalized reference surface-area probability measure used for sampling is
\[
d\omega_{\mathrm{ref}}(\theta,\phi) = \frac{a+\cos\theta}{2\pi^2a}\,d\theta\,d\phi.
\]
Equivalently, the intrinsic coordinates are sampled independently according to
\[
\phi\sim\operatorname{Unif}(0,\pi),
\qquad
p_\theta(\theta) = \frac{a+\cos\theta}{2\pi a},
\qquad \theta\in[0,2\pi).
\]
In the ambient dimension experiment, this measure is also held fixed for every $d$ and is not recomputed from the metric induced by the higher dimensional embedding.

The semi-torus LB eigenfunctions are constructed by separation of variables,
\[
f(\theta,\phi)
=
\Theta(\theta)\Phi(\phi).
\]
For each $\phi$-mode number $m$, the periodic $\theta$-dependent factor satisfies
\[
\Theta''(\theta) - \frac{\sin\theta}{a+\cos\theta}\Theta'(\theta) - \frac{m^2}{(a+\cos\theta)^2}\Theta(\theta)
= -\lambda\Theta(\theta),
\qquad
\Theta(\theta+2\pi)=\Theta(\theta).
\]

For the Neumann and Dirichlet boundary conditions, the eigenfunctions are, respectively, 
\[
f_{m,j}^{\mathrm{N}}(\theta,\phi) = \Theta_{m,j}(\theta)\cos(m\phi),
\qquad
m=0,1,2,\ldots
\]
and
\[
f_{m,j}^{\mathrm{D}}(\theta,\phi) =
\Theta_{m,j}(\theta)\sin(m\phi),
\qquad
m=1,2,\ldots.
\]
They satisfy
\[
\partial_\phi f_{m,j}^{\mathrm{N}}(\theta,0) = \partial_\phi f_{m,j}^{\mathrm{N}}(\theta,\pi) =
0,
\qquad
f_{m,j}^{\mathrm{D}}(\theta,0) = f_{m,j}^{\mathrm{D}}(\theta,\pi) = 0.
\]

The periodic $\theta$-eigenproblem is discretized using $n_\theta=1024$ equally spaced points on $[0,2\pi)$ and second-order centered finite differences with periodic wrapping. For each fixed $m$, the numerical eigenpairs closest to zero are retained and ordered by increasing positive LB eigenvalue. The index $j=0,1,\ldots$ denotes this ordering within a fixed value of $m$. The sign of each numerical eigenvector is fixed deterministically, and periodic cubic-spline interpolation is used to evaluate $\Theta_{m,j}(\theta)$ away from the finite-difference grid.

Because the scaling of a numerical eigenvector is arbitrary, each raw eigenfunction is normalized using its surface-weighted root-mean-square value with respect to $d\omega_{\mathrm{ref}}$. The Neumann and Dirichlet normalization factors, collectively denoted by
\[
\mathcal N_{m,j}
\in
\left\{
\mathcal N_{m,j}^{\mathrm{N}},
\mathcal N_{m,j}^{\mathrm{D}}
\right\},
\]
are defined, respectively, by
\[
\begin{aligned}
\mathcal N_{m,j}^{\mathrm{N}}
&=
\left[
\int_{\mathcal M}
\left|
\Theta_{m,j}(\theta)\cos(m\phi)
\right|^2
\,d\omega_{\mathrm{ref}}(\theta,\phi)
\right]^{1/2},
&
\mathcal N_{m,j}^{\mathrm{D}}
&=
\left[
\int_{\mathcal M}
\left|
\Theta_{m,j}(\theta)\sin(m\phi)
\right|^2
\,d\omega_{\mathrm{ref}}(\theta,\phi)
\right]^{1/2}.
\end{aligned}
\]

The normalized regression targets are therefore
\[
\widetilde f_{m,j}^{\mathrm{N}}(\theta,\phi) = \frac{
\Theta_{m,j}(\theta)\cos(m\phi)
}{
\mathcal N_{m,j}^{\mathrm{N}} 
},
\qquad
\widetilde f_{m,j}^{\mathrm{D}}(\theta,\phi) = \frac{
\Theta_{m,j}(\theta)\sin(m\phi)
}{
\mathcal N_{m,j}^{\mathrm{D}}
}.
\]

Each normalization factor is approximated once using a fixed $256\times256$ intrinsic-coordinate grid and the reference surface-area weight $a+\cos\theta$.  This computation is independent of the sampled training data. The resulting normalization factor is held fixed across all training set sizes $N$, independent trials, and kernel methods.  Consequently, changing $N$ changes only the available training samples and does not change the target eigenfunction or its scale.

\paragraph{Ambient Laplace eigenfunctions}

We also consider regression targets obtained by restricting eigenfunctions of the ambient Euclidean Laplacian in $\mathbb{R}^3$ to the embedded semi-torus. For a wavevector
\[
\bk=(k_1,k_2,k_3)\in\mathbb{R}^3,
\]
the raw target is defined as
\[
f_{\bk}(\bx)
=
\prod_{\ell=1}^{3}
T_\ell(k_\ell x_\ell),
\]
where each $T_\ell$ is independently chosen as either $\sin$ or $\cos$. Since
\[
-\Delta_{\mathbb{R}^3}f_{\bk} = \|\bk\|_2^2f_{\bk},
\]
the corresponding positive Euclidean Laplacian eigenvalue is
\[
\lambda_{\bk} = \|\bk\|_2^2
= k_1^2+k_2^2+k_3^2.
\]

The target is evaluated only on points of the semi-torus, $\bx=\bx(\theta,\phi)$. To make the RMSE values comparable among targets with different wave numbers, each eigenfunction is normalized using the fixed surface-weighted RMS factor
\[
\mathcal{N}_{\bk}
=
\left[
\frac{
\displaystyle
\int_0^{2\pi}\!\!\int_0^\pi
\left|f_{\bk}\bigl(\bx(\theta,\phi)\bigr)
\right|^2
(a+\cos\theta)\,d\phi\,d\theta
}{
\displaystyle
\int_0^{2\pi}\!\!\int_0^\pi
(a+\cos\theta)\,d\phi\,d\theta
}
\right]^{1/2}.
\]
The regression label associated with
$\bx_i=\bx(\theta_i,\phi_i)$ is therefore
\[
y_i
=
\widetilde f_{\bk}(\bx_i)
=
\frac{f_{\bk}(\bx_i)}{\mathcal{N}_{\bk}}.
\]
The normalization factor is approximated once on a fixed $256\times256$ intrinsic-coordinate grid and is held fixed across all training set sizes, independent trials, and kernel methods.

The three ambient space targets used in the numerical experiments are
\[
\begin{aligned}
f_{\mathrm{low}}(\bx)
&= \cos(x_1),
& \bk&=(1,0,0),
& \lambda_{\bk}&=1,
\\
f_{\mathrm{medium}}(\bx)
&= \sin(2x_1)\cos(2x_2)\sin(x_3),
& \bk&=(2,2,1),
& \lambda_{\bk}&=9,
\\
f_{\mathrm{high}}(\bx)
&= \cos(4x_1)\sin(3x_2)\cos(3x_3), & \bk&=(4,3,3), & \lambda_{\bk}
&=34.
\end{aligned}
\]




\paragraph{Effect of the ambient representation dimension.}

To examine learning from high dimensional observations, we keep the latent semi-torus coordinates, target function, and sampling distribution fixed while varying the ambient representation supplied to the kernels. The Dirichlet target is
\[
\widetilde f_{3,1}^{\mathrm{D}}(\theta,\phi)
=
\frac{\Theta_{3,1}(\theta)\sin(3\phi)}
{\mathcal N_{3,1}^{\mathrm{D}}},
\]
and is unchanged for every ambient dimension
\[
n\in\{3,7,11, 15\}.
\]

Given a point $[z_1,z_2,z_3]$ on a unit sphere $S^2$, we first compute its spherical coordinate representation as
\[
\theta=\operatorname{atan2}(z_2,z_1),
\qquad
\phi=\arccos(z_3).
\]

We then use $(\theta, \phi)$ to map onto a general $n$-dimensional ambient space by the following formula:
\begin{equation}\label{torus-formula}
(\theta, \phi) \mapsto
\begin{pmatrix}
  (2+\cos \theta) \cos \phi \\
  (2 + \cos \theta) \sin \phi \\
  \vdots\\
  \frac{2}{n-1}(2+\cos \theta) \cos\frac{n-1}{2}\phi \\
  \frac{2}{n-1}(2+\cos \theta) \sin\frac{n-1}{2}\phi \\
  \sqrt{\sum_{i=1}^{(n-1)/2} i^{-2}}\sin \theta
\end{pmatrix},
\end{equation}
where $n>1$ is an odd integer. Note that, we only consider the range $(\theta, \phi)=[0,2\pi]\times[0, \pi]$ to ensure that the transformed vector field on the torus is continuous.


For every \(n\), samples are drawn from the same normalized reference surface-area measure,
\[
d\omega_{\mathrm{ref}}(\theta,\phi)
=
\frac{a+\cos\theta}{2\pi^2a}\,d\theta\,d\phi.
\]
For each independent trial and training-set size \(N\), the intrinsic coordinates of the training, validation, test samples, and the same randomly sampled hyperparameter pairs are held fixed across all ambient dimensions \(n\). Hyperparameter selection is then performed independently for each ambient dimension and each kernel by choosing the sampled pair that minimizes the corresponding validation error.  Therefore, the selected hyperparameters may vary with \(n\). Our objective is to evaluate the sensitivity of the regression to high dimensional ambient representations of an intrinsically fixed two-dimensional learning function.

\paragraph{Fitting procedure.}

Training, validation, and test samples are drawn independently from the normalized surface-area measure of the reference semi-torus.  The same sampling measure is used for the Neumann, Euclidean Laplacian, and ambient dimension experiments.

For the Neumann and Euclidean Laplacian studies, the training sizes are
\(
N\in\{1024,2048,4096,8192,16384\}.
\)
For each target, five independent trials are performed. In each trial, a training pool of \(16384\) samples is generated and randomly permuted. The training set of size $N$ is obtained from the first $N$ points of this permutation, so the training sets remain nested as \(N\) increases.   Separate validation and test sets containing \(1000\) and \(8000\) samples, respectively, are also generated for each trial and held fixed across all values of \(N\).

For each target, independent trial, and training-set size \(N\), RBF KRR and DMKRR use exactly the same training, validation, and test points. They also use the same collection of candidate hyperparameter pairs. Specifically, $4096$ pairs are sampled independently on logarithmic scales according to
\[
\epsilon = 10^{u_\epsilon},
\qquad
u_\epsilon\sim\operatorname{Unif}[-4,2],
\]
and
\[
\eta = 10^{u_\lambda},
\qquad
u_\lambda\sim\operatorname{Unif}[-16,-10],
\]
where these ranges are selected arbitrarily.  For each kernel, the pair $(\epsilon,\eta)$ that minimizes the validation RMSE is selected.  A new KRR model is then fitted on the corresponding $N$-point training set using the selected parameters and evaluated on the test data.

Evaluating a large number of randomly sampled hyperparameter pairs can be computationally expensive.  Therefore, in practice, a more economical strategy may begin with a coarse search in $(\log_{10}\epsilon,\log_{10}\eta)$ space and subsequently refine the most promising candidates using a gradient-based methods or derivative-free optimization methods, such as the Nelder--Mead algorithm.

In the ambient dimension experiment, the intrinsic samples
$(\theta_i,\phi_i)$, target values, and hyperparameter candidates are also held fixed across all values of $d$. Only the ambient representation $X_d(\theta_i,\phi_i)$ supplied to the kernel changes.  Consequently, this experiment measures sensitivity to increasingly high dimensional nonlinear representations of a fixed latent learning problem.

\paragraph{An additional numerical result.}

Figure~\ref{fig:torus_fourier} shows the target semi-torus labels, which are the ambient Laplace eigenfunctions restricted on the semi-torus, and the corresponding RMSE convergence as a function of $N$. The RMSE values of DM in the 3 target cases are generally lower than those of RBF, and its convergence is faster than that of RBF.  This empirically shows that the advantage of the DM kernel is not restricted to targets that are only the Laplace--Beltrami eigenfunctions of the semi-torus. Also, these targets are not necessarily Dirichlet or Neumann at the semi-torus boundary.

\begin{figure}[htbp]
    \centering
    \includegraphics[width=\textwidth]{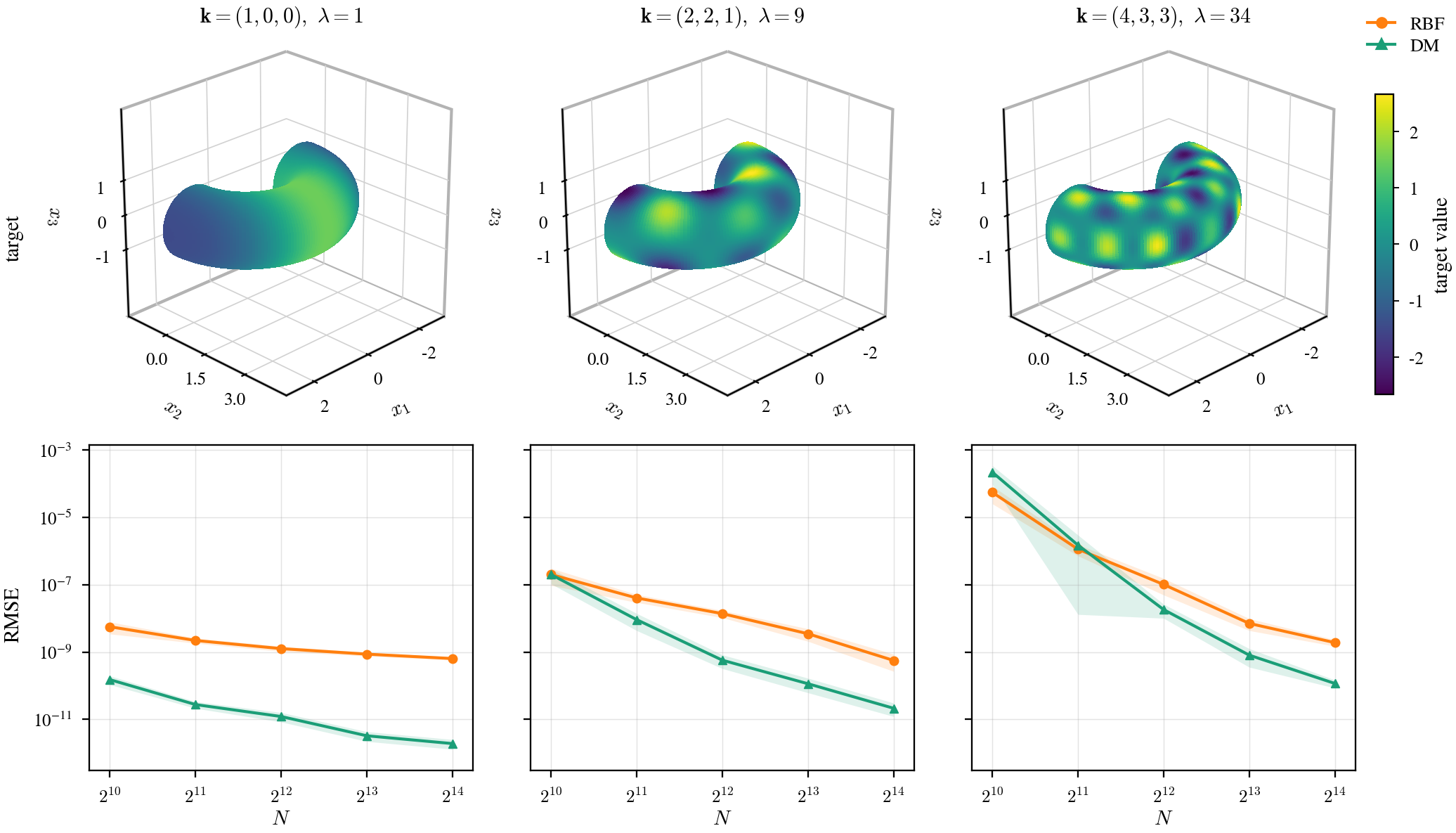}
    \caption{Laplace eigenfunctions in ambient space constrained to semi-torus.}
    \label{fig:torus_fourier}
\end{figure}

\end{document}